\documentclass{amsart}[2000]

\usepackage{amsmath}
\usepackage{amssymb}
\usepackage{amsthm}
\usepackage[shortlabels]{enumitem}
\usepackage{mathtools}
\usepackage{color}
\usepackage{bbm}
\usepackage{tikz-cd}
\usepackage{soul}
\usepackage[all]{xy}
\usepackage{hyperref}

\theoremstyle{plain}
\newtheorem{theorem}{Theorem}[section]
\newtheorem{corollary}[theorem]{Corollary}
\newtheorem{lemma}[theorem]{Lemma}

\newtheorem{proposition}[theorem]{Proposition}

\theoremstyle{definition}
\newtheorem{definition}[theorem]{Definition}

\newtheorem{example}[theorem]{Example}
\newtheorem{remark}[theorem]{Remark}

\numberwithin{equation}{section}

\newcommand{\A}{{\mathcal{A}}}

\renewcommand{\H}{{\mathcal{H}}}
\newcommand{\I}{{\mathcal{I}}}
\newcommand{\J}{{\mathcal{J}}}
\newcommand{\K}{{\mathcal{K}}}

\newcommand{\R}{{\mathcal{R}}}
\renewcommand{\S}{{\mathcal{S}}}
\newcommand{\T}{{\mathcal{T}}}

\newcommand{\B}{{\mathbb{B}}}
\newcommand{\F}{{\mathbb{F}}}
\renewcommand{\K}{{\mathbb{K}}}
\newcommand{\N}{{\mathbb{N}}}
\renewcommand{\R}{{\mathbb{R}}}
\newcommand{\Z}{{\mathbb{Z}}}

\newcommand{\Alg}{\operatorname{Alg}}

\newcommand{\Gr}{\operatorname{Gr}}

\newcommand{\id}{{\operatorname{id}}}

\renewcommand{\sp}{\operatorname{sp}}
\DeclareMathOperator{\Ad}{Ad}
\DeclareMathOperator{\supp}{supp}

\DeclareMathOperator\ST{ST}
\DeclareMathOperator\RL{RL}
\DeclareMathOperator\env{env}
\DeclareMathOperator{\SOT}{SOT}

\usepackage{etoolbox}
\apptocmd{\sloppy}{\hbadness 10000\relax}{}{}
\apptocmd{\sloppy}{\vbadness 10000\relax}{}{}

\title[Space-time boundaries and operator algebras for random walks]{Space-time boundaries for random walks \\ and their application to operator algebras}
\subjclass[2020]{Primary: 60J50, 60G50. Secondary: 60J10, 47L55.}
\keywords{random walks, space-time boundary, harmonic functions, operator algebras, boundary representations}

\author[A. Dor-On]{Adam Dor-On}
\address{Department of Mathematics \\ University of Haifa \\ Haifa \\ Israel.}
\email{adoron.math@gmail.com}

\author[I. Gekhtman]{Ilya Gekhtman}
\address{Department of Mathematics \\ Technion - IIT \\ Haifa \\ Israel.}
\email{ilyagekh@gmail.com}

\author[P. Prudnikov]{Pavel Prudnikov}
\address{Department of Mathematics \\ Technion - IIT \\ Haifa \\ Israel.}
\email{psprudnikov@icloud.com}

\thanks{A. Dor-On was partially supported by an NSF-BSF grant no. 2350543 / 2023695 (respectively) and an ISF grant no. 2499/25. I. Gekhtman was partially supported by an ISF grant no. 3423/24.}

\begin{document}

\begin{abstract}
    
    We investigate the Martin boundary of the space-time Markov chain associated to a finitely supported random walk $(\Gamma, \mu)$ with spectral radius $\rho$ and relate it to several classical compactifications of $\Gamma$. Assuming the strong ratio-limit property, we prove that the reduced ratio-limit compactification embeds naturally into the space-time Martin boundary. We introduce the $0$-Martin boundary, which governs the behaviour of $\infty$-harmonic functions, and show that the $0$-Martin kernels arise as rescaled limits of $\lambda$-Martin kernels as $\lambda\rightarrow 0$. For symmetric random walks on hyperbolic groups, the $0$-Martin boundary naturally covers the Gromov boundary, while the cover need not be injective in general. Our main structural theorem identifies the minimal space-time Martin boundary with the disjoint union of minimal $\lambda$-Martin boundaries over $\lambda\in [0, \rho^{-1}]$ with its natural pointwise topology. As an application, we show that the noncommutative Shilov boundary of the tensor algebra of the random walk $(\Gamma, \mu)$ coincides with its Toeplitz $C^*$-algebra.
\end{abstract}

\maketitle

\section{Introduction}
\label{s:introduction}

A popular line of research in geometric group theory and probability studies various boundaries of groups associated to random walks on them. Measureable and topological approaches to constructing such boundaries were initiated by Furstenberg \cite{Fur71, Fur72}, going back to ideas of Poisson in classical harmonic analysis. One such boundary, which has been studied for over fifty years is the so called Martin boundary, going back to ideas from classical potential theory \cite{Mar41}.

Let $\Gamma$ be a discrete countable group, and $\mu\colon\Gamma \rightarrow [0,1]$ be a probability vector whose support generates $\Gamma$ as a semigroup (sometimes called admissible). The \emph{random walk} on $\Gamma$ induced by $\mu$ is the stochastic matrix $P$ of transition probabilities given by $P(x,y):= \mu(x^{-1}y)$ for $x,y \in \Gamma$. To define the Martin boundary, we denote by $P^{n}(x, y) := \mu^{*n}(x^{-1}y)$ the $n$-step transition probabilities, and by $G(x, y) := \sum^\infty_{n = 0} P^{n}(x, y)$ the associated Green function. The Martin boundary $\partial_{M}\Gamma$ of the random walk $(\Gamma, \mu)$ is defined as the boundary obtained from compactifying $\Gamma$ with respect to the Martin kernel functions $\{y\mapsto G(x, y)/G(e, y)\}_{x\in \Gamma}$. We will denote the spectral radius of $\mu$ by $\rho := \lim \sup_{n\to \infty}(\mu^{\ast n}(e))^{1/n}$. For any $\lambda\in (0, \rho^{-1}]$, we can similarly define the $\lambda$-Green function $G(x, y \, | \, \lambda) := \sum^\infty_{n = 0}\lambda^{n}P^{n}(x, y)$ and the $\lambda$-Martin kernel functions, as well as the $\lambda$-Martin boundary $\partial_{M, \lambda}\Gamma$ of the random walk for $(\Gamma, \mu)$ (See Section \ref{s:preliminaries}). When $\mu$ is finitely supported, by the Martin--Poisson integral representation theorem, we know that $\lambda$-harmonic functions which are normalized to be $1$ at the identity element of $\Gamma$ correspond to probability measures supported on the $\lambda$-Martin boundary. The set of points in the $\lambda$-Martin boundary which correspond to extreme points of the compact convex set of normalized $\lambda$-harmonic functions (whose elements are often referred to as minimal $\lambda$-harmonic functions) is called the minimal $\lambda$-Martin boundary.

Martin boundaries have been identified for large classes of random walks when the group admits some kind of intrinsic geometric boundary. For instance, for symmetric finitely supported random walks on hyperbolic groups, for $\lambda \in (0,\rho^{-1}]$ the $\lambda$-Martin boundary has been shown to coincide with Gromov boundary \cite{Ancona, Lalley-Gouezel13, GouezelLLT}, and for symmetric finitely supported random walks on relatively hyperbolic groups it was shown that $\lambda$-Martin boundaries cover the Bowditch boundary of the group \cite{GGPY}, with a precise description of the fibers of the cover when parabolic subgroups are virtually abelian \cite{DGGP}.
 
Recently, the first-named author has established connections between topological boundary theory for Markov chains and analogues of classical Toeplitz C*-algebras \cite{DOAD21}. An important player that emerged from this study is a compactification associated to the random walk called the \emph{ratio-limit boundary}. This boundary was further studied and described for various examples, for instance by Woess \cite{WOWO21} for random walks on hyperbolic groups and in \cite{DDG+} for random walks on relatively hyperbolic groups. 

In this paper, we continue to explore the connection between operator algebras and random walks on groups, motivated by problems from the theory of operator algebras. In particular, we study a compactification of the space-time graph associated to a random walk, called the \emph{space-time Martin compactification}, which heuristically encapsulates both the ratio limit boundary and the $\lambda$-Martin boundaries, as well as a certain limiting case of $\lambda$-Martin boundaries as $\lambda \rightarrow 0$ that we call the $0$-Martin boundary. Our work was motivated by the problem of characterizing the non-commutative Shilov boundary of the tensor algebra associated to a random walk as studied in \cite{DOMA14, DOMA17}, and we use our description of the minimal space-time boundary to resolve this problem.

We start with discussing the space-time Martin boundary of a random walk. The space-time set $\ST$ is defined as the subset of $\Gamma\times\mathbb{N}$ given by pairs $(x,m)$ such that $P^m(e,x)>0$, and an edge from $(x, m)$ to $(y, n)$ if and only if $n = m + 1$ and $\mu(x^{-1}y)>0$. We then define a Markov chain $(Y_n)_{n\in \N}$ on $\ST$ with transition probabilities $P_{\ST}((x, m), (y, m + 1)) = \mu(x^{-1}y)$. We call it the space-time Markov chain associated to $\mu$. The Green function and Martin boundary can be similarly defined for the space-time Markov chain, and the space-time Martin boundary $\partial_{\ST}\Gamma$ of $(\Gamma, \mu)$ is defined to be the Martin boundary of the \emph{space-time} Markov chain. The space-time Markov chain and its boundary have been studied in \cite{LS63, McDM21, Lalleyspacetime}. However, to the best of our knowledge there has not been much work on relating the space-time boundary to other standard boundaries of Markov chains, or computing it for random walks on specific groups.

For what follows, we will need to recall the ratio limit boundary. Given a random walk on a countable discrete group $\Gamma$ with an admissible measure $\mu$, we denote by $H(x, y) := \lim_{n\to\infty}P^{n}(x, y)/P^{n}(e, y)$ the ratio limit kernels for $x, y\in\Gamma$. We implicitly work under the assumption that these limits exist, in which case we say that $(\Gamma,\mu)$ have the strong ratio-limit property (SRLP). The (reduced) ratio-limit compactification $\Delta^r_{RL}\Gamma$ of the random walk $(\Gamma, \mu)$ is defined as the compactification of $\Gamma / R_{\mu}$ with respect to the ratio limit kernel functions $\{y\mapsto H(x, y)\}_{x\in \Gamma}$, where $R_\mu = \{y\in\Gamma \,|\, H(x, y) = H(x, e) \mbox{ for all } x\in\Gamma \}$ is a subgroup of $\Gamma$, so that the ratio limit kernels $y\mapsto H(x,y)$ naturally factor through $\Gamma / R_\mu$ for every $x\in \Gamma$.

Our inspiration stems from the work of Lalley \cite[Proposition 10]{Lalleyspacetime} where he identified the space-time Martin boundary of lazy nearest neighbor random walks on free groups as ``hollow cylinders with a top cap''. Our first result shows that the ratio-limit compactification can be naturally embedded as part of this ``top cap'' of the space-time boundary. 

\vspace{6pt plus 2pt minus 2pt}
\noindent\textbf{Theorem \ref{t:ratiolimitspacetime}.}
\textit{Let $\Gamma$ be a discrete group, and let $\mu$ be an admissible lazy symmetric finitely supported probability measure on $\Gamma$ such that the associated random walk of $(\Gamma, \mu)$ has SRLP. Then, there exists a natural embedding of $\Delta^r_{RL} \Gamma$ into the space-time Martin boundary.}
\vspace{6pt plus 2pt minus 2pt}

From Lalley's work \cite[Theorem 2]{Lalleyspacetime} we know that for lazy nearest neighbor random walk on the free group $\F_{d}$ of $d$ generators, the space-time Martin boundary is given by $\partial_{\ST}\F_{d} \cong ([0,R)\times \partial\F_{d}) \sqcup (\{R\} \times \Delta \F_{d})$ where the topology on the right-hand-side is the product topology induced from $[0,R]\times \partial \F_{d}$. On the other hand, by \cite[Proposition 10]{Lalleyspacetime} the \emph{minimal} space-time Martin boundary is given by $\partial^{m}_{\ST}\F_{d} \cong [0,R]\times \partial\F_{d}$. In order to calculate the minimal space-time boundary in greater generality, we introduce the so-called $0$-Martin boundary $\partial_{M, 0}\Gamma$. We define a (possibly non-symmetric) norm $|\cdot|_\mu$ on $\Gamma$ by declaring that
\begin{equation*}
    |x|_\mu := \inf \{n\geq 0 \,|\, P^{n}(e, x) > 0 \}.
\end{equation*}

We define the truncated family of measures $\bar \mu_x$ on $\Gamma$ by declaring that for $x, y\in \Gamma$, if $|y|_\mu\leq |x|_\mu$, then $\bar \mu_x(y) = 0$ and if $|y|_\mu\geq |x|_\mu + 1$, then $\bar \mu_x(y) = \mu(x^{-1}y)$. If $\mu(x^{-1}y) > 0$, then we necessarily have $|y|_\mu\leq |x|_\mu + 1$, hence $\bar\mu_x(y) > 0$ if and only if $|y|_\mu = |x|_\mu + 1$ and $\mu(x^{-1}y) > 0$.

Note that the sub-Markov chain $(Z_n)_{n\in \N}$ whose distribution is given by $(\bar\mu_x)_x$ is the random walk $(X_n)_{n\in \N}$ killed when reaching a point that could have been visited earlier. Roughly speaking $(Z_n)_{n\in \N}$ follows the law of $\mu$ but is allowed to move forward only when it does so with respect to the word semi-norm $|\cdot|_\mu$. The $0$-Martin boundary is defined as the Martin boundary of this sub-Markov chain with respect to the $0$-Martin kernels given by $K(x, y\,|\,0) = P^{|y| - |x|}(x, y)/P^{|y|}(e, y)$ when $e, x, y$ are aligned in this order on a geodesic and by $K(x, y\,|\,0) = 0$ otherwise. In Proposition \ref{p:0-stabilityinthegroup}, we show that at the level of the group, the $0$-Martin kernels arise as limits of the rescaled $\lambda$-Martin kernels as $\lambda \to 0$. More precisely, if we write $\tilde K(x, y \,|\, \lambda) := \lambda^{|x|}K(x, y \,|\, \lambda)$ when $\lambda \in (0,\rho^{-1}]$ and $\tilde K(x, y \,|\, 0) := K(x, y \,|\, 0)$, then we have that $\tilde K(x,y \, | \, \lambda) \rightarrow \tilde K(x,y \, | \, 0)$ as $\lambda \rightarrow 0$. 

Lalley's work shows that for lazy nearest neighbor random walks on free groups the $0$-Martin compactification is homeomorphic to the end compactification. We show that for random walks on hyperbolic groups, the $0$-Martin boundary naturally covers Gromov boundary, and that this cover is generally not injective.

\vspace{6pt plus 2pt minus 2pt}

\noindent\textbf{Theorem \ref{p:zeroMartintoGromov} + Proposition \ref{STcounterexample}.}
\textit{Let $\Gamma$ be a hyperbolic group, and let $\mu$ be a finitely supported admissible symmetric probability measure on $\Gamma$. Then, there exist a natural equivariant surjective continuous map from the $0$-Martin compactification $\Delta_{M, 0}\Gamma$ to the Gromov compactification $\Delta\Gamma$ which extends the identity on $\Gamma$. However, this map may fail to be injective in general.}
\vspace{6pt plus 2pt minus 2pt}

Our main result in this paper relates the minimal space-time Martin boundary to minimal $\lambda$-Martin boundaries for general lazy random walks with finite range.

\vspace{6pt plus 2pt minus 2pt}
\noindent\textbf{Theorem \ref{t:minimalspacetime}.}
\textit{Let $\Gamma$ be a discrete group, and let $\mu$ be an admissible finitely supported lazy probability measure on $\Gamma$. The minimal space-time Martin boundary is homeomorphic to the disjoint union of the minimal $\lambda$-Martin boundaries for $\lambda\in [0,\rho^{-1}]$, i.e. 
}
\begin{equation*}
    \partial^{m}_{\ST}\Gamma \cong \bigsqcup_{\lambda\in [0, \rho^{-1}]}\partial_{M, \lambda}^{m}\Gamma,
\end{equation*}
where the topology on the right-hand-side is the one induced from the point-wise convergence topology of functions $(\lambda, \xi) \mapsto \widetilde{K}(x,\xi \, | \, \lambda)$ for $x\in \Gamma$. 

\vspace{6pt plus 2pt minus 2pt}

More precisely, the point-wise convergence topology on $\bigsqcup_{\lambda\in [0, \rho^{-1}]}\partial_{M, \lambda}^{m}\Gamma$ is defined as the topology in which a sequence $(\lambda_{n}, \xi_{n})$ converges to the point $(\lambda, \xi)$ if and only if $\lambda_{n}\to\lambda$ and $\widetilde{K}(x, \xi_{n} \,|\, \lambda_{n}) \to \widetilde{K}(x, \xi \,|\, \lambda)$.

Finally, we describe the application of our results to operator algebras. Specifically, we use Theorem \ref{t:minimalspacetime} to characterize the non-commutative Shilov boundary (i.e. $C^{\ast}$-envelope) of the tensor algebra associated to a random walk on a group. Non-commutative Shilov boundary was first defined and studied by Arveson's in several papers \cite{ARWI69, ARWI72}, where it was first shown to exist by Hamana in \cite{HAMA79} through the use of injective envelopes of operator systems.

We first illustrate our approach in the context of classical Toeplitz C*-algebra. Let $U\colon \ell^{2}(\N)\to\ell^{2}(\N)$ be the unilateral shift given by $U(e_{n}) = e_{n + 1}$ for the standard orthonormal basis $(e_{n})_{n\in\N}$ of $\ell^{2}(\N)$. The $C^{\ast}$-algebra generated by $U$ is called the (classical) Toeplitz algebra $\mathcal{T}$, while the norm-closed algebra $\mathcal{T}^{+}$ generated by $U$ can be identified with the disk algebra $A(\mathbb{D})$ of continuous functions on $\overline{\mathbb{D}}$ which are analytic in its interior. It turns out that the closed two-sided ideal of $\mathcal{T}$ generated by $I-UU^{\ast}$ is isomorphic to the algebra of compact operators $\mathbb{K}(\ell^{2}(\N))$. Moreover, the quotient by this ideal is $\ast$-isomorphic to the space of continuous functions on the unit circle $C(\mathbb{T})$. This yields the following exact short sequence
\begin{equation} \label{eq:toeplitz-ext}
    0\to \mathbb{K}(\ell^{2}(\N))\to \mathcal{T} \to C(\mathbb{T}) \to 0.
\end{equation}
By maximum modulus principle, the quotient $\mathcal{T} \rightarrow C(\mathbb{T})$ is isometric on the disk algebra $A(\mathbb{D})$, and $C(\mathbb{T})$ is the smallest $C^{\ast}$-algebra quotient of $\mathcal{T}$ with this property. This comprises our first example of a C*-envelope, namely that of $A(\mathbb{D})$. 

This classical construction of Toeplitz C*-algebra, its quotients and its canonical non-self-adjoint subalgebra has been generalized in several different directions: among them are Cuntz-Krieger algebras arising from matrices with values in $\{0, 1\}$ \cite{CUKR80}, graph $C^{\ast}$-algebras for directed graphs in \cite{DOSA18}, Cuntz-Pimsner algebras associated to $C^{\ast}$-correspondences in \cite{PIMV97}, and Cuntz-Pimsner algebras associated to subproduct systems in \cite{VIAM12}; each one directly generalizing the previous setting. The construction of operator algebras arising from random walks naturally fits in the context of operator algebras arising from subproduct systems (see, e.g. \cite[Section 5]{DOAD21}), and provides a fertile ground for addressing some natural questions put forth by Viselter in \cite[Section 6]{VIAM12}. These questions have been taken up by several authors, specifically in works on $KK$-theory of $C^{\ast}$-algebras arising from Temperley-Leib subproduct systems by Habbestad and Neshveyev \cite{HANE23}, and by Arici, Gerontogiannis, and Neshveyev \cite{ARGN25}.

In the context of finite stochastic matrices, the $C^{\ast}$-envelope has been calculated by Dor-On and Markiewicz in \cite{DOMA17} (see also \cite{CDHLZ21}) where it was shown that the $C^{\ast}$-envelope quotient of the Toeplitz C*-algebra associated to the stochastic matrix can sometimes land strictly between the Toeplitz C*-algebra and Viselter's analogue of the Cuntz-Pimsner quotient. For random walks on non-trivial finite groups, the results in \cite{DOMA17} show that the $C^{\ast}$-envelope coincides with the Toeplitz $C^{\ast}$-algebra of the random walk. We will extend this to random walks on infinite discrete groups in Section \ref{sec:tensor_algebras}, and show that the $C^{\ast}$-envelope coincides with the Toeplitz $C^{\ast}$-algebra as well. 

Our strategy is to start with an exact sequence analogous to one in equation \eqref{eq:toeplitz-ext} and use Arveson's theory of boundary representations to characterize the $C^{\ast}$-envelope via strongly peaking representations \cite{ARWI11}. Theorem \ref{t:minimalspacetime} is then used to prove that all irreducible subrepresentations of the identity representation are strongly peaking. This allows us to answer \cite[Question 4]{VIAM12} in new examples by characterizing the $C^{\ast}$-envelope of the natural tensor algebra of a subproduct system arising from random walks.

\vspace{6pt plus 2pt minus 2pt}
\noindent\textbf{Theorem \ref{thm:envelope:main}.} \textit{Let $\Gamma$ be a discrete finitely generated group, and $\mu$ be an admissible lazy probability measure on $\Gamma$ with finite support. Then, the $C^{\ast}$-envelope of the tensor algebra of the random walk is the Toeplitz $C^{\ast}$-algebra of the random walk.}
\vspace{6pt plus 2pt minus 2pt}

\textbf{Organization.}
In Section \ref{s:preliminaries}, we provide the necessary background for the rest of the paper. In particular, we review the theory of harmonic functions and Martin boundaries for random walks on groups and operator algebras associated to them. In Section \ref{sec:space_time_martin}, we introduce the space-time Markov chain and show that the ratio limit compactification can be embedded in the space-time Martin compactification. In Section \ref{s:zeroMartin}, we introduce the $0$-Martin compactification and connect it to $\lambda$-Martin compactifications as $\lambda$ tends to $0$. In Section \ref{sec:hyperbolic}, we specialize to random walks on hyperbolic groups. We show that the $0$-Martin compactification covers the Gromov compactification and provide examples where the canonical covering map is not injective. In Section \ref{s:minimalspacetime}, we prove Theorem \ref{t:minimalspacetime}, which relates the minimal space-time Martin boundary to the minimal $\lambda$-Martin boundaries. Finally, in Section \ref{sec:tensor_algebras}, we prove Theorem \ref{thm:envelope:main} which characterizes the $C^{\ast}$-envelope of the tensor algebra associated to a random walk, identifying it with the corresponding Toeplitz $C^{\ast}$-algebra of the random walk.

\subsection*{Acknowledgments} The authors are deeply grateful to Matthieu Dussaule for sharing his notes and ideas on space-time boundaries. The authors are also grateful to Wolfgang Woess for bringing to our attention the work of Molchanov on minimal $\alpha$-harmonic functions on space-time Markov chains on Cartesian products \cite{Molchanov}.

\section{Preliminaries}
\label{s:preliminaries}

\subsection{Harmonic functions and $\lambda$-Martin boundaries.}
In this subsection we discuss some of the necessary preliminaries on random walks and their Martin boundaries. We refer to \cite{DYEU69} for the general boundary theory of transient sub-Markov chains, to \cite{KSKN76} for transient Markov chains accessible from a reference point, and to \cite{Woessbook} for irreducible Markov chains and random walks.

We first discuss classical $\lambda$-Martin boundaries for irreducible Markov chains induced by a probability measure on a countable discrete group $\Gamma$, which are known as random walks. Let $\Gamma$ be a countable discrete group and $\mu$ an admissible probability vector on $\Gamma$, i.e. so that the support $\supp(\mu) := \{x \in \Gamma: \mu(x) > 0\}$ generates $\Gamma$ as a semigroup. We denote by $P$ the \emph{irreducible} Markov kernel defined by $P(x, y) = \mu(x^{-1}y)$. We will call such a pair $(\Gamma, \mu)$ a \emph{$\mu$-random walk} on $\Gamma$. We say that the random walk is symmetric if $\mu(x) = \mu(x^{-1})$ for every $x\in \Gamma$. Finally, we say that $\mu$ is finitely supported if $\supp(\mu)$ is finite. 

Consider the \emph{$\lambda$-Green functions} given by
\begin{equation*}
    G(x, y \,|\, \lambda) := \sum_{n\geq 0}\lambda^n P^{n}(x, y),
\end{equation*}
which are defined for every $\lambda$ in an open ball of radius $R$ around the origin where 
\begin{equation*}
    R := \left(\limsup_{n\to\infty} \sqrt[n]{P^n(x, y)} \right)^{-1}
\end{equation*}
is the radius of convergence of the Green power series $\lambda \mapsto G(x,y \, | \, \lambda)$. Since random walks are irreducible Markov chains, the radius of convergence $R$ is independent of the choice of $x, y$ in $\Gamma$. Thus, it follows that $R=\rho^{-1}$ is the inverse of the spectral radius of the measure $\mu$ as defined in the introduction. We may define $\lambda$-Martin kernels for $0 < \lambda < R$ and $\lambda = R$, respectively, via
\begin{equation*}
    K(x, y \,|\, \lambda) := \frac{G(x, y \,|\, \lambda)}{G(x_{0}, y \,|\, \lambda)}\quad \mbox{and}\quad K(x, y \, | \, R) := \lim_{\lambda \rightarrow R}\frac{G(x, y \,|\, \lambda)}{G(x_{0}, y \,|\, \lambda)},
\end{equation*}
where $x, y\in\Gamma$. The second limit defining $K(x, y \, | \, R)$ exists by \cite[Lemma 3.66]{woessbook2} regardless of whether $G(x, y \, | \, R)$ is finite or not. In fact, by Varapoulos (see \cite[Theorem~7.8]{Woessbook}), the only groups that carry an admissible probability measure such that $G(x,y \, | \, R)$ is infinite (also known as $R$-recurrence) are groups with at most quadratic polynomial growth, i.e. groups that are virtually $\mathbb Z^d$ for $d\leq 2$. In particular, not only is $G(x,y \,|\, \lambda)$ always finite for $\lambda\in (0, R)$, it is also automatically finite at $\lambda = R$ if $\Gamma$ is not virtually $\mathbb Z^d$ for $d\leq 2$. We use the $\lambda$-Martin kernels to define the $\lambda$-Martin boundary for a random walk $(\Gamma, \mu)$ for $\lambda \in (0,R]$. 
\begin{definition}
    Given $0 < \lambda \leq R$, the compactification of $\Gamma$ with respect to the $\lambda$-Martin kernels $\{y \mapsto K(x, y \,|\, \lambda)\}_{x\in \Gamma}$ is called the \emph{$\lambda$-Martin compactification} and denoted by $\Delta_{M, \lambda}\Gamma$. The set difference of $\partial_{M, \lambda}\Gamma := \Delta_{M, \lambda}\Gamma \setminus \Gamma$ is called the $\lambda$-Martin boundary of $\Gamma$.
\end{definition}

As before, a sequence $y_n\in\Gamma$ converges to a point $\xi\in \partial_{M, \lambda}\Gamma$ if and only if for every $x\in\Gamma$ we have that $K(x, y_n \,|\, \lambda)$ converges to the value $K(x, \xi \,|\, \lambda)$ of the limit $\lambda$-Martin kernel.

For random walks, the $\lambda$-Martin compactification always exists and is a metrizable space. It can be constructed as the completion of $\Gamma$ endowed with the distance $d_{M, \lambda}$ defined by
\begin{equation*}
    d_{M, \lambda}(y_1, y_2) := \sum_{x\in \Gamma}\alpha(x)\frac{\big|K(x, y_1 \,|\, \lambda) - K(x, y_2 \,|\, \lambda)\big| + \big|\delta_x(y_1) - \delta_x(y_2)\big|}{C_x + 1},
\end{equation*}
where $C_x>0$ is a constant satisfying $K(x, y \,|\, \lambda)\leq C_x$ for all $x, y$ (see, e.g. \cite[Lemma 4.1]{SAST97}) and $\sum_{x\in \Gamma}\alpha(x)$ is some convergent series.

The term $\delta_x(y_1) - \delta_x(y_2)$ within the definition of the metric guarantees that $\Gamma$ is open in the Martin compactification.
In fact, it could happen that there is a sequence $y_n$ going to infinity and a point $y$ in $\Gamma$ such that for all $x$, $K(x, y_n \,|\, \lambda)$ converges to $K(x, y \,|\, \lambda)$.
In such a case, there would also be a point $\xi$ in the boundary such that $K(x, y \,|\, \lambda) = K(x, \xi \,|\, \lambda)$ for all $x$.
Thus, the term $\delta_x(y_1) - \delta_x(y_2)$ is needed in the definition of the metric above to construct an actual compactification of $\Gamma$, see \cite[equation (4.4)]{SAST97} and also the comments before \cite[Definition~24.5]{Woessbook}. This shows that the map $y\in \Delta_{M, \lambda}\Gamma\mapsto K(\cdot, y \,|\, \lambda)$ might not be one-to-one. Moreover, although this holds by definition if $y_n\in \Gamma$ and $\xi$ is in the boundary, generally a sequence $y_n\in \Delta_{M, \lambda} \Gamma$ may fail to converges to a point $y\in \Delta_{M, \lambda} \Gamma$ even if $K(x, y_n \,|\, \lambda)$ converges to $K(x, y \,|\, \lambda)$ for all $x$. We will see that such pathologies can be avoided by assuming that $\mu$ is finitely supported.

Recall that for every $x \in \Gamma$, there is a constant $C_x>0$ such that $K(x, y \,|\, \lambda)\leq C_x$ for every $y \in \Gamma$. We denote by $\mathcal{B}^{+}_{1}(\Gamma, \R)$ the space of non-negative functions $f\colon\Gamma\to\R_{+}$ such that $f(e) = 1$ and $f(x)\leq C_x$ for every $x$. By Tychonov's theorem, $\mathcal{B}^{+}_{1}(\Gamma, \R)$ is a compact Hausdorff space with the topology of point-wise convergence.

\begin{proposition}
\label{p:embeddingMartinintoboundedfunctions}
    
    The map $\xi\in \partial_{M, \lambda}\Gamma\mapsto K(\cdot, \xi \,|\, \lambda)\in\mathcal{B}^{+}_{1}(\Gamma, \R)$ is one-to-one and continuous.
    Moreover, if $\mu$ is finitely supported, then the map $y\in \Delta_{M, \lambda}\Gamma\mapsto K(\cdot, y \,|\, \lambda)$ is also one-to-one and continuous.
\end{proposition}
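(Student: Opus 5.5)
Continuity is immediate from the construction, and injectivity on the boundary comes from the metric. Continuity first. By construction of $d_{M,\lambda}$, each function $y\mapsto K(x,y\,|\,\lambda)$ is $d_{M,\lambda}$-Lipschitz on $\Gamma$ with constant $(C_x+1)/\alpha(x)$. It therefore extends uniquely to a continuous function on the completion $\Delta_{M,\lambda}\Gamma$, and this extension is exactly how $K(x,\xi\,|\,\lambda)$ is defined. Since the topology on $\mathcal{B}^{+}_{1}(\Gamma,\R)$ is that of pointwise convergence, continuity of $y\mapsto K(\cdot,y\,|\,\lambda)$ on all of $\Delta_{M,\lambda}\Gamma$ follows coordinate by coordinate. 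Restricting to the boundary gives continuity there. The values lie in $\mathcal{B}^{+}_{1}$ because $K(e,y\,|\,\lambda)=1$ and $0\le K(x,y\,|\,\lambda)\le C_x$ pass to limits.

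Injectivity on $\partial_{M,\lambda}\Gamma$. Take $\xi,\eta$ in the boundary with $K(\cdot,\xi\,|\,\lambda)=K(\cdot,\eta\,|\,\lambda)$, and choose $y_n\to\xi$, $z_n\to\eta$ in $\Gamma$. Since $\Gamma$ is open and discrete in the compactification, both sequences eventually leave every finite set, so $\delta_x(y_n)-\delta_x(z_n)\to 0$ for each $x$. The $K$-terms of $d_{M,\lambda}(y_n,z_n)$ converge termwise to $|K(x,\xi\,|\,\lambda)-K(x,\eta\,|\,\lambda)|=0$. Each summand is dominated by $2\alpha(x)$, so dominated convergence gives $d_{M,\lambda}(y_n,z_n)\to0$, hence $\xi=\eta$.

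For the second statement, assume $\mu$ is finitely supported. It remains to rule out $K(\cdot,y\,|\,\lambda)=K(\cdot,\xi\,|\,\lambda)$ with $y\in\Gamma$ and $\xi$ a boundary point; the case of two distinct points of $\Gamma$ is handled similarly. The plan uses harmonicity.

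\begin{itemize}
\item Since $\mu$ has finite support, $\lambda P$ applied to $K(\cdot,y_n\,|\,\lambda)$ involves only finitely many terms. Passing to the limit therefore shows that $K(\cdot,\xi\,|\,\lambda)$ is $\lambda$-harmonic everywhere: the $\delta$-defect $\delta_{y_n}(x)/G(e,y_n\,|\,\lambda)$ vanishes for fixed $x$ once $y_n$ leaves finite sets. It is precisely the exchange of limit and $P$-sum that needs finite support.
\item On the other hand, the resolvent identity $G=I+\lambda PG$ gives
\[
\lambda P K(\cdot,y\,|\,\lambda)(x)=K(x,y\,|\,\lambda)-\frac{\delta_y(x)}{G(e,y\,|\,\lambda)},
\]
which is strictly superharmonic at $x=y$ when $G$ is finite.
\item So the two functions cannot coincide. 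In the $R$-recurrent case $\lambda=R$, $K(\cdot,y\,|\,R)$ is the limit of such ratios, and separate care is needed.
\item Two distinct points $y\neq y'$ of $\Gamma$ are distinguished the same way: each kernel has its defect at a different point.
\end{itemize}

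The main obstacle is the boundary case $\lambda=R$ when $G(\cdot,\cdot\,|\,R)=\infty$, that is, when $\Gamma$ is virtually $\Z^d$ with $d\le2$. There I would argue that the Martin kernel is harmonic at finite points as well, so this argument no longer applies directly. I would instead verify injectivity via the explicit description of $K(\cdot,\cdot\,|\,R)$ for such recurrent walks, or argue that the compactification is then trivial.
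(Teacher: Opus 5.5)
Where $G(\cdot,\cdot\,|\,\lambda)<\infty$, your argument is correct and is essentially the paper's proof. Continuity is coordinatewise from $d_{M,\lambda}$. Injectivity on $\partial_{M,\lambda}\Gamma$ comes from the construction of the compactification; the paper just says ``by definition'', and your dominated-convergence argument spells this out. Under finite support, the boundary kernels are $\lambda^{-1}$-harmonic, whereas $K(\cdot,y\,|\,\lambda)$ has the defect $\delta_y(x)/G(e,y\,|\,\lambda)$ at $x=y$.

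The $R$-recurrent obstacle you flag is real, but neither of your suggested fixes can work, because the second assertion is false there. Write $K(x,y\,|\,\lambda)=F(x,y\,|\,\lambda)/F(e,y\,|\,\lambda)$ via first-passage generating functions. In the $R$-recurrent case $F(\cdot,y\,|\,R)$ is $R^{-1}$-harmonic even at $y$, since $U(y,y\,|\,R)=1$. So for every $y\in\Gamma$, $K(\cdot,y\,|\,R)$ is the unique normalized positive $R^{-1}$-harmonic function.

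Concretely, for simple random walk on $\Z$ one has $K(x,y\,|\,\lambda)=r(\lambda)^{|y-x|-|y|}$ with $r(\lambda)=(1-\sqrt{1-\lambda^2})/\lambda\to 1$. Hence $K(\cdot,y\,|\,1)\equiv 1$ for all $y$, and $\Delta_{M,1}\Gamma$ is just the one-point compactification. The map $y\mapsto K(\cdot,y\,|\,R)$ is therefore constant on $\Gamma$, and only the boundary statement survives, trivially.

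The right resolution is to add the hypothesis $G(e,e\,|\,\lambda)<\infty$. This is automatic for $\lambda<R$, and for $\lambda=R$ unless $\Gamma$ is virtually $\Z^d$ with $d\le 2$. The paper's proof uses this tacitly when it relies on the defect term.
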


\begin{proof}

    By definition of the $\lambda$-Martin boundary, if for two points $\xi_1, \xi_2\in \partial_{M, \lambda}\Gamma$, if we have $K(x, \xi_1 \,|\, \lambda) = K(x, \xi_2 \,|\, \lambda)$ for all $x$, then $\xi_1 = \xi_2$, so the map is one-to-one.
    Now assume that $\xi_n\in \partial_{M, \lambda}\Gamma$ converges to $\xi\in \partial_{M, \lambda}\Gamma$.
    This implies that for every $x \in \Gamma$,
    \begin{equation*}
        \alpha(x)\frac{\big|K(x, \xi_n \,|\, \lambda) - K(x, \xi \,|\, \lambda)\big|}{C_x + 1}\underset{n\to \infty}{\longrightarrow}0
    \end{equation*}
    and so $K(x, \xi_n \,|\, \lambda)$ converges to $K(x, \xi \,|\, \lambda)$, so that $K(\cdot, \xi_n \,|\, \lambda)$ converges to $K(\cdot, \xi \,|\,\lambda)$ in the point-wise convergence topology. Thus, $\xi \mapsto K(\cdot,\xi \, | \, \lambda)$ is continuous.
    
    The same proof shows that the map $y\in \Delta_{M, \lambda}\Gamma\mapsto K(\cdot, y \,|\, \lambda)$ is continuous.
    What is left to do is to prove that whenever $\mu$ is finitely supported, then this map is also one-to-one. First of all, we have $\sum_{x\in \Gamma}\lambda\mu(x)G(x, y \,|\, \lambda) = G(e, y \,|\, \lambda) - \delta_e(y)$. This shows that for $y\in \Gamma$, the function $K(\cdot, y \,|\, \lambda)$ is $\lambda^{-1}$-harmonic everywhere except for at $y$.
    In particular, if $y_1\neq y_2$ in $\Gamma$, then $K(\cdot, y_1 \,|\, \lambda)\neq K(\cdot, y_2 \,|\, \lambda)$. Now, if $\mu$ is finitely supported, then letting $y$ tend to infinity, we have that $K(\cdot, \xi \,|\, \lambda)$ is $\lambda^{-1}$-harmonic for $\xi$ in the boundary, see \cite[Lemma~24.16]{Woessbook}. Consequently, in this case, for every $y\in \Gamma$ and every $\xi\in \partial_{M, \lambda}\Gamma$, we also have $K(\cdot, y \,|\, \lambda)\neq K(\cdot, \xi \,|\, \lambda)$, which concludes the proof.
\end{proof}

\begin{corollary}
\label{c:pointwiseconvergenceMartin}

    A sequence $\xi_n\in \partial_{M, \lambda}\Gamma$ converges to a point $\xi\in \partial_{M, \lambda}\Gamma$ if and only if the sequence $K(x, \xi_n \,|\, \lambda)$ converges to $K(x, \xi \,|\, \lambda)$ for all $x\in\Gamma$. If moreover, $\mu$ is finitely supported, then a sequence $y_n \in \Delta_{M, \lambda}\Gamma$ converges to a point $y \in \Delta_{M, \lambda}\Gamma$ if and only if $K(x, y_n \,|\, \lambda)$ converges to $K(x, y \,|\, \lambda)$ for all $x\in\Gamma$.
\end{corollary}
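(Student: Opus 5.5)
The plan is to derive the corollary from Proposition \ref{p:embeddingMartinintoboundedfunctions} together with compactness: a continuous injection from a compact space into a Hausdorff space is a homeomorphism onto its image. The ``only if'' directions are exactly the continuity part of Proposition \ref{p:embeddingMartinintoboundedfunctions}. So the real work is the converse, and I will handle it in the same way in both settings.

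For the second statement, with $\mu$ finitely supported, I will proceed as follows.
\begin{itemize}
\item Consider the map $\Phi\colon \Delta_{M,\lambda}\Gamma \to \mathcal{B}^{+}_{1}(\Gamma,\R)$ given by $y \mapsto K(\cdot, y \,|\, \lambda)$.
\item By Proposition \ref{p:embeddingMartinintoboundedfunctions}, $\Phi$ is continuous and one-to-one.
\item $\Delta_{M,\lambda}\Gamma$ is compact, being a compactification, and $\mathcal{B}^{+}_{1}(\Gamma,\R)$ is Hausdorff in the pointwise topology.
\item Hence $\Phi$ is a closed map, and therefore a homeomorphism onto its image $\Phi(\Delta_{M,\lambda}\Gamma)$.
\end{itemize}
Now suppose $K(x,y_n \,|\, \lambda) \to K(x,y \,|\, \lambda)$ for all $x$. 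This says $\Phi(y_n) \to \Phi(y)$ inside the image. Applying $\Phi^{-1}$, which is continuous on the image, gives $y_n \to y$ in $\Delta_{M,\lambda}\Gamma$.

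For the first statement, I apply the same argument to the restriction of $\Phi$ to $\partial_{M,\lambda}\Gamma$. This restriction is injective and continuous by the first part of Proposition \ref{p:embeddingMartinintoboundedfunctions}. The boundary $\partial_{M,\lambda}\Gamma$ is compact, because it is closed in $\Delta_{M,\lambda}\Gamma$: the metric term $|\delta_x(y_1)-\delta_x(y_2)|$ makes each point of $\Gamma$ isolated, so $\Gamma$ is open in the compactification. The same closed-map reasoning then shows that pointwise convergence of kernels implies convergence of the $\xi_n$ to $\xi$ in the boundary.

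The one point needing care is the compactness of $\partial_{M,\lambda}\Gamma$, that is, the openness of $\Gamma$, which the definition of the metric provides. Everything else is the standard compact-to-Hausdorff argument.
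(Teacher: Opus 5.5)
Your proposal is correct and matches the paper's argument. Both show that the kernel map from Proposition \ref{p:embeddingMartinintoboundedfunctions} is a continuous injection from a compact space into the Hausdorff space $\mathcal{B}^{+}_{1}(\Gamma,\R)$, hence a homeomorphism onto its image; this is applied on $\partial_{M,\lambda}\Gamma$ in general and on $\Delta_{M,\lambda}\Gamma$ when $\mu$ is finitely supported. Your justification that $\partial_{M,\lambda}\Gamma$ is compact, because the $\delta_x$ term in the metric makes $\Gamma$ open, fills in a step the paper only asserts.
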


\begin{proof}

    By the first part of Proposition~\ref{p:embeddingMartinintoboundedfunctions}, the map $\xi\in \partial_{M, \lambda}\Gamma\mapsto K(\cdot, \xi \,|\, \lambda)\in\mathcal{B}^{+}_{1}(\Gamma, \R)$ is continuous and one-to-one. Since both the $\lambda$-Martin boundary and the space $\mathcal{B}^{+}_{1}(\Gamma, \R)$ are compact Hausdorff, this implies that it is a homeomorphism onto its image, which conclude the proof.
    If $\mu$ is finitely supported, then we use the second part of Proposition~\ref{p:embeddingMartinintoboundedfunctions}.
\end{proof}

Similar to the classical case, $\lambda$-Martin boundaries are intimately related to harmonic functions. 

\begin{definition}

    A function $f\colon\Gamma\to \R$ is called \emph{$t$-harmonic} if $P_\mu f = tf$, where $P_\mu$ is the Markov operator associated with $\mu$, that is, for all $x\in\Gamma$,
    \begin{equation*}
        P_\mu f(x) = \sum_{y\in \Gamma}P(x,y)f(y) = tf(x).
    \end{equation*}
\end{definition}
If there exists a $t$-harmonic function, then we necessarily have $t\geq R^{-1}$, see \cite[Lemma~7.2]{Woessbook}. We will prefer the notation $\lambda^{-1}$-harmonic functions for $\lambda\leq R$. We denote by $\mathcal{H}^{+}_1{}(P, \lambda)$ the set of non-negative $\lambda^{-1}$-harmonic functions $h$ such that $h(e) = 1$ with pointwise convergence topology. If $\mu$ is admissible, then the minimum principle holds, see \cite[(1.15) Minimum Principle]{Woessbook}, hence any non-negative $\lambda^{-1}$-harmonic function is strictly positive. We have the following representation theorem, see \cite[Theorem~24.7]{Woessbook}.

\begin{theorem}[Martin-Poisson representation theorem]
\label{t:MartinPoissonrepresentation}

    For every $h\in\mathcal{H}^{+}_1{}(P, \lambda)$, there exists a probability measure $\nu_h$ on $\partial_{M, \lambda}\Gamma$ such that for all $x\in \Gamma$, we have
    \begin{equation*}
        h(x) = \int_{\partial_{M,\lambda}\Gamma}K(x, \xi \,|\, \lambda)d\nu_h(\xi).
    \end{equation*}
\end{theorem}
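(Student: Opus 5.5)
The plan is the classical Choquet-type argument: every normalized $\lambda^{-1}$-harmonic function is a limit of convex combinations of the Martin kernels $K(\cdot,y\,|\,\lambda)$, and compactness of $\Delta_{M,\lambda}\Gamma$ turns these combinations into an integral over the boundary. Fix $h\in\mathcal{H}^{+}_1(P,\lambda)$. Choose an exhausting increasing sequence of finite sets $F_1\subset F_2\subset\cdots$ with $\bigcup_k F_k=\Gamma$ and $e\in F_1$. For each $k$, approximate $h$ from below by a potential. One option is the reduced function $h_k:=\widehat{R}_{F_k}h$, i.e. $h_k(x)=\mathbb{E}_x\bigl[\lambda^{\tau_{F_k}}h(X_{\tau_{F_k}});\,\tau_{F_k}<\infty\bigr]$, where $\tau_{F_k}$ is the hitting time of $F_k$. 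This function equals $h$ on $F_k$, is $\lambda^{-1}$-superharmonic, and is $\lambda^{-1}$-harmonic off $F_k$. Its Riesz decomposition then writes it as a $\lambda$-potential:
\[
h_k(x)=\sum_{y\in F_k}G(x,y\,|\,\lambda)\,g_k(y),\qquad g_k:=h_k-\lambda P h_k\geq 0 .
\]
In the $\lambda=R$ case I would work with $\lambda'<R$ and pass to the limit, or use that $G(\cdot,\cdot\,|\,R)$ is finite whenever transience holds. The $R$-recurrent case, where $\Gamma$ is virtually $\Z^d$ with $d\le 2$, is degenerate: there $h$ is the unique kernel and I would treat it separately.

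Dividing and multiplying by $G(e,y\,|\,\lambda)$ gives
\[
h_k(x)=\sum_{y\in F_k}K(x,y\,|\,\lambda)\,\nu_k(\{y\}),\qquad \nu_k(\{y\}):=G(e,y\,|\,\lambda)g_k(y).
\]
Evaluating at $x=e\in F_k$ shows that $\nu_k$ is a probability measure on $\Gamma\subset\Delta_{M,\lambda}\Gamma$. Because $\Delta_{M,\lambda}\Gamma$ is compact and metrizable, a subsequence of $(\nu_k)$ converges weakly to a probability measure $\nu$ on $\Delta_{M,\lambda}\Gamma$. For each fixed $x$, the map $y\mapsto K(x,y\,|\,\lambda)$ is continuous and bounded by $C_x$ on the compactification, since the metric $d_{M,\lambda}$ was built so that this holds. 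Hence $h(x)=\lim_k h_k(x)=\int K(x,\xi\,|\,\lambda)\,d\nu(\xi)$, using that $h_k(x)=h(x)$ once $x\in F_k$.

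The remaining step, which I expect to be the main obstacle, is showing that $\nu$ gives no mass to $\Gamma$ itself, so that it lives on $\partial_{M,\lambda}\Gamma$. Fix $y_0\in\Gamma$. Because $\Gamma$ is open in the compactification, $\{y_0\}$ is clopen, so $\nu(\{y_0\})=\lim_k\nu_k(\{y_0\})=\lim_k G(e,y_0\,|\,\lambda)\bigl(h_k-\lambda Ph_k\bigr)(y_0)$. Once the $\mu$-neighbours of $y_0$ lie in $F_k$, and $\mu$ finitely supported makes this happen eventually (otherwise I would argue via monotone convergence $h_k\uparrow h$), we have $h_k=h$ near $y_0$. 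Then $(h_k-\lambda Ph_k)(y_0)=(h-\lambda Ph)(y_0)=0$ by harmonicity of $h$. So every point of $\Gamma$ gets limit mass $0$, and $\nu_h:=\nu$ is supported on $\partial_{M,\lambda}\Gamma$.

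The delicate points are the justification of the Riesz decomposition for $h_k$, which requires the harmonic part to vanish, and the continuity of the kernels on the full compactification. For the first, I would use that $h_k$ is dominated by a finite combination of Green functions: since $h_k$ agrees with $h$ on the finite set $F_k$, the minimum principle gives $h_k\le C\sum_{y\in F_k}G(\cdot,y\,|\,\lambda)$, so its greatest harmonic minorant is $0$.
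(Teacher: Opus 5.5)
The paper gives no proof of its own and simply cites \cite[Theorem~24.7]{Woessbook}. Your argument is correct and is essentially the standard proof of that theorem: reduced functions on a finite exhaustion, Riesz decomposition into $\lambda$-potentials, a weak-$*$ limit on the compact metrizable space $\Delta_{M,\lambda}\Gamma$, and the clopen-singleton argument giving $\nu(\Gamma)=0$.

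One justification is misstated. The bound $h_k\le C\sum_{y\in F_k}G(\cdot,y\,|\,\lambda)$ does not follow from a minimum principle, which on the infinite set $\Gamma\setminus F_k$ would need control at infinity. It follows directly from $h_k(x)=\sum_{y\in F_k}\mathbb{E}_x\bigl[\lambda^{\tau_{F_k}};X_{\tau_{F_k}}=y\bigr]h(y)\le \max_{F_k}h\cdot\sum_{y\in F_k}G(x,y\,|\,\lambda)$.
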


Assuming that $\mu$ is finitely supported, then $K(\cdot, \xi \,|\, \lambda)$ is itself $\lambda^{-1}$-harmonic, see \cite[Lemma~24.16]{Woessbook}, but in general, it might not be. Also, \textit{a priori} there is no reason for the measure $\nu_h$ in Theorem~\ref{t:MartinPoissonrepresentation} to be unique. To ensure uniqueness, we need to consider the minimal Martin boundary.

\begin{definition}
\label{d:minimality}

    A non-negative $\lambda^{-1}$-harmonic function $h$ with $h(e) = 1$ is called \emph{minimal} if for every other non-negative $\lambda^{-1}$-harmonic function $g$ with $g\leq Ch$ for some constant $C>0$, we have in fact $g = ch$ for a constant $c>0$. The minimal $\lambda$-Martin boundary is the set
    \begin{equation*}
        \partial^m_{M, \lambda}\Gamma = \{\xi\in \partial_{M, \lambda}\Gamma \,:\, K(\cdot, \xi \,|\, \lambda) \text{ is minimal }\lambda^{-1}\text{-harmonic}\}.
    \end{equation*}
\end{definition}

The minimal $\lambda$-Martin boundary corresponds to the set of extreme points in $\mathcal{H}_+(\lambda)$, which is a compact convex set with the pointwise convergence topology. We then have that for every non-negative $\lambda^{-1}$-harmonic function $h$ with $h(e) = 1$, the probability measure $\nu_h$ given by Theorem~\ref{t:MartinPoissonrepresentation} can be chosen uniquely if we require it to have full measure on $\partial_{M, \lambda}^m\Gamma$ (see \cite[Theorem~24.9]{Woessbook}).

\medskip
At $\lambda = 0$, we have
$G(x, y \,|\, 0) = \delta_{x, y}$, i.e. $G(x, y) = 1$ if $x = y$ and $0$ otherwise. Thus, it is not sensible to use the Green function to define the Martin boundary at $\lambda = 0$. We will provide full details for the suitable notions of $0$-Martin boundary and $\infty$-harmonic functions in Section~\ref{s:zeroMartin}.

For the purpose of defining and studying the space-time boundary and $0$-Martin boundary in Sections \ref{sec:space_time_martin} and \ref{s:zeroMartin}, we will need the theory of Martin boundaries for possibly reducible sub-Markov chains that satisfy an accessibility condition. 

Consider a countable space $\Omega$ endowed with a sub-Markov transition kernel $P$, i.e.\ a function $P\colon \Omega\times\Omega\to \mathbb R_{\geq 0}$ such that $\sum_{y\in \Omega}P(x, y) \leq 1$ for all $x\in\Omega$. This defines a sub-Markov chain $X := (X_n)_{n}$ whose step distribution is given by $P$. The \textit{Green function} of the chain is given for $x, y\in\Omega$ by
\begin{equation*}
    G(x, y) := \sum_{n\geq 0}P^{n}(x, y).
\end{equation*}
The radius of convergence of the Green series $\lambda \mapsto \sum_{n\geq 0}\lambda^n P^{n}(x, y)$ is given via
\begin{equation*}
    R(x, y) := \left(\limsup_{n\to\infty} \sqrt[n]{P^n(x, y)} \right)^{-1}.
\end{equation*}
A sub-Markov chain $X$ is called \textit{transient} if $G(x, y)$ is finite for all $x, y\in\Omega$. We say that the sub-Markov chain $X$ is a \textit{accessible} (or \emph{accessible from a base-point}) if there exists a base-point $x_{0}\in\Omega$ such that for each $y\in\Omega$, there exists $n\in\N$ with $P^{n}(x_{0}, y) > 0$. When $X$ is additionally transient, we can readily define the \textit{Martin kernels} by $K(x, y) := G(x, y)/G(x_{0}, y)$ for all $x, y\in\Omega$.

\begin{definition}
    Let $X$ be an accessible transient sub-Markov chain on a countable state space $\Omega$ with base-point $x_0$. The minimal compactification of the state space $\Omega$ with respect to the Martin kernels $\{y \mapsto K(x, y)\}_{x\in\Omega}$ is called the \emph{Martin compactification} of $\Omega$ according to base-point $x_0$ and denoted $\Delta_M\Omega$. The set difference of $\partial_M\Omega := \Delta_M\Omega\setminus\Omega$ is called the \emph{Martin boundary} of $\Omega$.
\end{definition}

The Martin compactification is the unique smallest compact topological space containing $\Omega$ as a discrete open and dense subset so that the functions $y \mapsto K(x, y)$ extend continuously. Similarly to before, when the sub-Markov chain is transient and accessible, then the Martin kernels are bounded by \cite[Lemma 4.1]{SAST97}, and we may embed the state space $\Omega$ inside a compact product of intervals with the product topology. The closure of $\Omega$ in that product compactum is a realization of the Martin compactification. 

It follows form the definition of a Martin compactification that a sequence $(y_n)_{n}$ in $\Omega$ converges to a point $\xi\in\partial_M\Omega$ if and only if we have that $K(x, y_n)$ converges to $K(x, \xi)$ for every $x\in\Omega$. In other words, the Martin functions $x\mapsto K(x,y_n)$ converge in the point-wise topology to $x\mapsto K(x,\xi)$. Moreover, we have the following description of convergence on the boundary similarly to Corollary \ref{c:pointwiseconvergenceMartin}.

\begin{proposition}
\label{prop:sub-Markov:Martin:convergence}

    Let $X$ be an accessible transient sub-Markov chain on a countable state space $\Omega$ with base-point $x_0$. A sequence $\xi_{n}\in \partial_{M}\Omega$ converges to a point $\xi\in \partial_{M}\Omega$ if and only if $K(x, \xi_n)$ converges to $K(x, \xi)$ for all $x\in \Omega$.
\end{proposition}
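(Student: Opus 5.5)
We follow the proof of Corollary~\ref{c:pointwiseconvergenceMartin}. Define
\[
\Phi\colon \partial_M\Omega \to \prod_{x\in\Omega}[0,C_x], \qquad \xi \mapsto K(\cdot,\xi),
\]
where $C_x$ bounds $K(x,\cdot)$, which is possible by \cite[Lemma 4.1]{SAST97}. The target is a compact Hausdorff space by Tychonov's theorem. The plan is to show that $\Phi$ is continuous and injective. Since $\partial_M\Omega$ is compact, being closed in the compact space $\Delta_M\Omega$, it then follows that $\Phi$ is a homeomorphism onto its image. That is exactly the claim: $\xi_n\to\xi$ in $\partial_M\Omega$ if and only if $K(x,\xi_n)\to K(x,\xi)$ for every $x$.

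\emph{Continuity.} Realize $\Delta_M\Omega$ as the completion of $\Omega$ under the metric
\[
d(y_1,y_2) = \sum_{x}\alpha(x)\,\frac{|K(x,y_1)-K(x,y_2)| + |\delta_x(y_1)-\delta_x(y_2)|}{C_x+1},
\]
with $\sum_x \alpha(x)<\infty$ and $\alpha(x)>0$. Equivalently, realize it as the closure of the embedded copy of $\Omega$ in $\prod_x [0,C_x]\times\prod_x\{0,1\}$. The extensions $K(x,\cdot)$ are then restrictions of coordinate projections, so they are continuous on $\Delta_M\Omega$. If $\xi_n\to\xi$, each summand tends to $0$, so $K(x,\xi_n)\to K(x,\xi)$ for every $x$.

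\emph{Injectivity on the boundary.} Take boundary points $\xi_1,\xi_2$ with $K(\cdot,\xi_1)=K(\cdot,\xi_2)$. For boundary points the $\delta$-coordinates are all $0$, because points of $\partial_M\Omega$ are limits of sequences leaving every finite set; this uses that $\Omega$ is open and discrete in $\Delta_M\Omega$. So $\xi_1$ and $\xi_2$ have identical coordinates in the product realization, hence $\xi_1=\xi_2$. This is where the $\delta$-term in the metric is used. It matters only for separating boundary points from points of $\Omega$, and the statement does not require that.

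\emph{Conclusion.} A continuous injection from a compact space into a Hausdorff space is a closed embedding. Therefore convergence in $\partial_M\Omega$ is equivalent to convergence of the images in the product topology, which is pointwise convergence of $K(x,\cdot)$ for all $x$. The converse direction is just this homeomorphism property.

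\emph{Main obstacle.} The step needing care is showing that the boundary carries the coordinates $K(x,\xi)$ consistently, and that the $\delta$-coordinates vanish there, in the abstract minimal-compactification definition. For that I would fix the concrete product realization described before the statement: $\Delta_M\Omega$ is the closure of $\Omega$ in the product. Then all of the above is immediate.
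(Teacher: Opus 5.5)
Your proposal is correct and follows the paper's route. The paper simply reuses the proofs of Proposition~\ref{p:embeddingMartinintoboundedfunctions} and Corollary~\ref{c:pointwiseconvergenceMartin} with $\lambda=1$: continuity of $\xi\mapsto K(\cdot,\xi)$ from the metric, injectivity on the boundary, and the compact-to-Hausdorff embedding argument. Your only addition is an explicit reason for injectivity (the $\delta$-coordinates vanish at boundary points), which the paper takes as immediate from the definition of the Martin boundary.
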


\begin{proof}

    The proof follows verbatim the proofs of Corollary \ref{c:pointwiseconvergenceMartin} and Proposition \ref{p:embeddingMartinintoboundedfunctions} with $\lambda = 1$.
\end{proof}

\begin{definition}

    A function $h\colon\Omega\to \mathbb R$ is called \emph{harmonic} if for all $x\in\Omega$,
    \begin{equation*}
        Ph(x) = \sum_{y\in \Omega}P(x, y)h(y) = h(x),
    \end{equation*}
    where $P$ is the sub-Markov operator. When $h$ is non-negative and harmonic with $h(x_{0}) = 1$, we say that $h$ is \emph{minimal} if for every other non-negative harmonic function $g$ with $g\leq h$, we have in fact that $g = Ch$ for a constant $C > 0$. We then define the \emph{minimal} Martin boundary as the set
    \begin{equation*}
        \partial^{m}_{M}\Omega = \{\xi\in \partial_{M}\Omega \,:\, K(\cdot, \xi) \text{ is minimal harmonic}\}.
    \end{equation*}
\end{definition}

We now recall the Martin-Poisson representation theorem for sub-Markov chains that are transient and accessible from \cite[Theorem 6]{DYEU69}.
\begin{theorem}[Martin-Poisson representation theorem]
\label{thm:Martin-Poisson:sub-Markov}

    Let $X$ be an accessible transient sub-Markov chain on a countable state space $\Omega$ with base-point $x_0 \in \Omega$. Then, for every non-negative harmonic function $h$ such that $h(x_{0}) = 1$, there exists a unique probability measure $\nu_h$ on $\partial^{m}_M\Omega$ such that for all $x\in\Omega$,
    \begin{equation*}
        h(x) = \int_{\partial^{m}_M\Omega}K(x, \xi)d\nu_h(\xi).
    \end{equation*}
\end{theorem}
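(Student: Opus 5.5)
We plan to follow the classical route of Hunt and Dynkin: approximate $h$ by potentials, pass to a weak-$\ast$ limit on the compact space $\Delta_M\Omega$ to get existence, and then use Doob's $h$-transform to obtain both concentration on $\partial^m_M\Omega$ and uniqueness.

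\textbf{Step 1: approximation by potentials.} Choose an exhaustion of $\Omega$ by finite sets $F_1\subset F_2\subset\cdots$ containing $x_0$. Let $h_k$ be the réduite of $h$ on $F_k$, namely
\begin{equation*}
h_k(x) := \mathbb{E}_x\big[h(X_{\tau_k});\ \tau_k<\infty\big],
\end{equation*}
where $\tau_k$ is the hitting time of $F_k$. Transience gives the last-exit decomposition
\begin{equation*}
h_k(x)=\sum_{y\in F_k}G(x,y)\,\nu_k'(y), \qquad \nu_k'(y)\geq 0 ,
\end{equation*}
so $h_k$ is a potential of a finitely supported charge. Moreover $h_k=h$ on $F_k$ and $h_k\leq h$. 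Writing $G(x,y)=K(x,y)G(x_0,y)$, set $\nu_k(y):=G(x_0,y)\nu_k'(y)$. This gives
\begin{equation*}
h_k(x)=\int K(x,\cdot)\,d\nu_k, \qquad \nu_k(\Omega)=h_k(x_0)=h(x_0)=1 .
\end{equation*}

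\textbf{Step 2: weak-$\ast$ limit on the boundary.} Since $\Delta_M\Omega$ is compact metrizable and each $K(x,\cdot)$ extends continuously to it, a subsequence $\nu_k$ converges weak-$\ast$ to a probability measure $\nu$ on $\Delta_M\Omega$. Passing to the limit gives $h(x)=\int K(x,\xi)\,d\nu(\xi)$. To see that $\nu$ gives no mass to $\Omega$, note that $\Omega$ is open and discrete in $\Delta_M\Omega$. For fixed $y$, the harmonicity of $h$ forces $\nu_k'(y)=(h-Ph)(y)=0$ once $y$ lies in the interior of $F_k$, that is, once all $P$-neighbours of $y$ lie in $F_k$. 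Here some care is needed, since the chain need not have finite range; I would handle this via the formula $\nu'_k = (I-P)h_k$ on $F_k$ together with $h_k\uparrow h$. Hence $\nu_k(\{y\})\to0$, and $\nu$ lives on $\partial_M\Omega$.

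\textbf{Step 3: minimality and uniqueness (the main obstacle).} Passing from $\partial_M\Omega$ to $\partial^m_M\Omega$, and proving uniqueness, is the hard part. I would use Doob's $h$-transform. The chain with kernel $P^h(x,y)=P(x,y)h(y)/h(x)$ started at $x_0$ converges almost surely in $\Delta_M\Omega$ to a boundary point $X_\infty$. This follows from the martingale/upcrossing argument applied to $K(x,X_n)/h(X_n)$ with finitely many $x$ at a time, which suffices because the metric is built from countably many kernels. Its exit law $\nu^h$ represents $h$. The key identity is that for any representing measure $\nu$ the exit law of the $h$-process satisfies $\nu^h=\int \nu^{K(\cdot,\xi)}\,d\nu(\xi)$. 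Applied with $h=K(\cdot,\xi)$, one shows that $\nu^{K(\cdot,\xi)}=\delta_\xi$ exactly when $\xi$ is minimal, and that the set of $\xi$ for which this holds is Borel. Since $\nu^h$ is itself a representing measure, this yields that $\nu^h$ is carried by $\partial^m_M\Omega$. For uniqueness: if $\nu$ is carried by $\partial^m_M\Omega$, then the identity above gives $\nu^h=\int\delta_\xi\,d\nu=\nu$. Thus $\nu$ is determined by $h$.
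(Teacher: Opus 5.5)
Your Steps 1 and 2 are sound, so you do obtain a representing probability measure on $\partial_M\Omega$. In particular your infinite-range fix works: $\nu_k'(y)=h(y)-\sum_z P(y,z)h_k(z)\to 0$ by dominated convergence, since $h_k\le h$ and $h_k\to h$. The paper, by contrast, proves nothing here and simply invokes \cite[Theorem 6]{DYEU69}. The trouble is in Step 3, which carries the real content of the statement, and it has two gaps.

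\textbf{Convergence of the $h$-process.} This does not follow from a forward martingale argument for $K(x,X_n)/h(X_n)$. One has $\mathbb{E}^h\bigl[K(x,X_{n+1})/h(X_{n+1})\mid X_n=y\bigr]=h(y)^{-1}\sum_z P(y,z)K(x,z)$, and $z\mapsto K(x,z)$ is in general not $P$-superharmonic in its second variable. For example, take the walk on $\mathbb{Z}$ with steps $+1,-1$ of probabilities $p>q$, with $x_0=0$ and $h\equiv 1$. Then $K(1,z)=q/p$ for $z\le 0$ and $K(1,z)=1$ for $z\ge 1$, so $\sum_z P(0,z)K(1,z)=p+q^2/p>q/p=K(1,0)$. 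The missing ingredient is Hunt's time reversal. Reversing paths from their last exit time from a finite set gives a chain with kernel $\hat P(y,z)=G(x_0,z)P(z,y)/G(x_0,y)$, which is the same for every $h$-process. For this kernel $y\mapsto K(x,y)$ is superharmonic, and the resulting upcrossing bound is uniform in the finite set.

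\textbf{Concentration on $\partial^m_M\Omega$.} Grant convergence and your mixture identity $\nu^h=\int\nu^{K(\cdot,\xi)}\,d\nu(\xi)$. Even so, the sentence ``since $\nu^h$ is itself a representing measure, $\nu^h$ is carried by $\partial^m_M\Omega$'' does not follow. Taking $\nu=\nu^h$ only says that $\nu^h$ is invariant under the kernel $\xi\mapsto\nu^{K(\cdot,\xi)}$. The constant kernel $\xi\mapsto\nu^h$ satisfies the same identity, so it says nothing about whether $\nu^{K(\cdot,\xi)}=\delta_\xi$.

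What you need is the localized (conditioning) form. For Borel $B$, set $h_B(x):=h(x)\,\mathbb{P}^h_x[X_\infty\in B]$. This function is harmonic and is represented by $\mathbf 1_B\nu^h$. Its exit law is also $\mathbf 1_B\nu^h$, because the $h_B$-process is the $h$-process conditioned on $\{X_\infty\in B\}$. Applying the mixture identity to $h_B$ gives $\nu^h(A\cap B)=\int_B\nu^{K(\cdot,\xi)}(A)\,d\nu^h(\xi)$ for all Borel $A,B$. Running $A,B$ over a countable generating algebra then yields $\nu^{K(\cdot,\xi)}=\delta_\xi$ for $\nu^h$-a.e.\ $\xi$.

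With these two repairs, plus the standard facts you group under ``one shows'' (e.g.\ that $g\le K(\cdot,\xi)$ implies $\nu^g\le\nu^{K(\cdot,\xi)}$), your uniqueness argument $\nu^h=\int\delta_\xi\,d\nu=\nu$ goes through. You would then have a self-contained Hunt--Doob proof in place of the paper's appeal to Dynkin.
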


\begin{proof}

    The theorem follows from the result of Dynkin on superharmonic functions in \cite[Theorem 6]{DYEU69}. Note that the ``space of exits" in \cite{DYEU69} corresponds exactly to superharmonic functions with the unique representing measure, and that the respective Martin kernels are necessarily harmonic by \cite[Theorem 5]{DYEU69}.
\end{proof}

Even though the Martin kernel $x \mapsto K(x, \xi)$ is harmonic for any point $\xi \in \partial_M^m \Omega$, the results in \cite{DYEU69} do not necessarily assume that $x \mapsto K(x, \xi)$ is harmonic for all $\xi \in \partial_M \Omega$.

\begin{proposition}
\label{p:all-bnry-harm}
    
    Let $X$ be an accessible transient sub-Markov chain on a countable state space $\Omega$ with base-point $x_0 \in \Omega$. If $P$ has finite range, i.e. for each $x\in\Omega$, there exists finitely many $y\in\Omega$ such that $P(x, y) > 0$, then the limit Martin kernels $K(\cdot, \xi)$ are harmonic for every $\xi\in\partial_{M}\Omega$.
\end{proposition}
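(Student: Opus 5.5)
The plan is to use the one-step identity for the Green function and pass to the limit, with finite range making the limit exchange harmless. First, for every $x,y\in\Omega$,
\begin{equation*}
    G(x, y) = \delta_x(y) + \sum_{z\in\Omega}P(x, z)G(z, y).
\end{equation*}
This follows by splitting off the $n=0$ term in $G(x,y)=\sum_{n\geq 0}P^n(x,y)$ and writing $P^{n}(x,y)=\sum_z P(x,z)P^{n-1}(z,y)$ for $n\geq 1$. Tonelli applies since everything is non-negative, and all terms are finite by transience. Dividing by $G(x_0,y)$, which is positive by accessibility from $x_0$ (take $n$ with $P^n(x_0,y)>0$), we obtain
\begin{equation*}
    K(x, y) = \frac{\delta_x(y)}{G(x_0, y)} + \sum_{z\in\Omega}P(x, z)K(z, y).
\end{equation*}

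Next, fix $\xi\in\partial_M\Omega$ and choose a sequence $y_n\in\Omega$ with $y_n\to\xi$. Such a sequence exists because $\Omega$ is dense in $\Delta_M\Omega$ and the compactification is metrizable, as recalled above via the embedding into a product of intervals. Fix $x\in\Omega$. Since $\Omega$ is discrete and open in $\Delta_M\Omega$ while $\xi\notin\Omega$, the sequence $y_n$ eventually leaves every finite set. In particular $y_n\neq x$ for all large $n$, so the first term $\delta_x(y_n)/G(x_0,y_n)$ vanishes eventually. Finite range means the sum over $z$ has only finitely many nonzero terms, those with $P(x,z)>0$. Each of these terms converges, $K(z,y_n)\to K(z,\xi)$, by the defining property of convergence in the Martin compactification. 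A finite sum of convergent sequences may be passed to the limit term by term, which gives
\begin{equation*}
    K(x,\xi)=\sum_{z}P(x,z)K(z,\xi),
\end{equation*}
that is, $K(\cdot,\xi)$ is harmonic.

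The only delicate point is checking that $y_n$ eventually avoids $x$. This rests on $\Omega$ being open and discrete, so that a sequence converging to a boundary point cannot be frequently equal to the point $x\in\Omega$. Without finite range this same argument would break: passing the limit through an infinite sum would only give superharmonicity via Fatou's lemma. Finite range is therefore exactly what upgrades that inequality to equality.
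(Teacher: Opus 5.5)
Your proposal is correct and is essentially the argument the paper relies on: the paper proves this by pointing to the standard argument in Woess's book, which is exactly your approach. That argument passes the one-step identity $K(x,y)=\delta_x(y)/G(x_0,y)+\sum_z P(x,z)K(z,y)$ to the limit along $y_n\to\xi$, using that $y_n\neq x$ eventually and that finite range makes the sum finite.
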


\begin{proof}

    The argument is identical to that used in \cite[Lemma 7.6]{Woessbook}.
\end{proof}

Thus, for a sub-Markov kernel with finite range, we can apply the representation theorem to functions $x \mapsto K(x,\xi)$ for any $\xi \in \partial_M \Omega$. We will exploit this fact later in Section~\ref{sec:tensor_algebras}. 

In Section \ref{sec:hyperbolic}, we will need the fact that an extension of the identity map between two compact Hausdorff spaces is automatically continuous whenever it preserves convergence of sequences from the group.
\begin{proposition}
\label{prop:extension_of_identity}

    Let $\Gamma$ be a countable discrete set densely embedded into two compact Hausdorff spaces $X$ and $Y$, and let $f\colon X\to Y$ be an extension of identity map on $\Gamma$, i.e. $f\restriction_{\Gamma} = \id_{\Gamma}$. If for every net $(x_{\alpha})_{\alpha}$ in $\Gamma$ converging to $\xi\in X$, the net $(f(x_{\alpha}))_{\alpha}$ converges to $f(\xi)\in Y$, then $f$ is continuous.
\end{proposition}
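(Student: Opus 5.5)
We argue by contradiction, using only the compactness of the spaces and the Hausdorff property of $Y$. Suppose $f$ is not continuous at some point $\xi\in X$. Then there is an open neighbourhood $V$ of $f(\xi)$ in $Y$ such that every neighbourhood $U$ of $\xi$ contains a point $\eta_U\in U$ with $f(\eta_U)\notin V$. Since $Y$ is compact Hausdorff, and hence regular, we may shrink $V$ and assume there is an open $W\ni f(\xi)$ with $\overline{W}\subseteq V$. The aim is to replace the net $(\eta_U)_U$, whose points may lie in $X\setminus\Gamma$, by a net of points of $\Gamma$ to which the hypothesis applies.

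\textbf{Step 1 (approximating boundary points).} Fix a neighbourhood $U$ of $\xi$ and the point $\eta_U\in U$. If $\eta_U\in\Gamma$, set $x_U=\eta_U$. Otherwise $\eta_U$ is in the closure of $\Gamma$, so there is a net $(x_\beta)$ in $\Gamma\cap U$ converging to $\eta_U$ in $X$. By the hypothesis, $f(x_\beta)=x_\beta\to f(\eta_U)$ in $Y$.

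\textbf{Step 2 (choosing a point of $\Gamma$).} Since $f(\eta_U)\notin V$, it lies in the open set $Y\setminus\overline{W}$. By the convergence in Step 1, we can pick $x_U\in\Gamma\cap U$ with $x_U=f(x_U)\in Y\setminus\overline{W}$.

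\textbf{Step 3 (the contradiction).} Index the points $x_U$ by the neighbourhoods $U$ of $\xi$, directed by reverse inclusion. This gives a net in $\Gamma$ with $x_U\to\xi$ in $X$. The hypothesis then forces $f(x_U)\to f(\xi)$, so eventually $f(x_U)\in W$. This contradicts $f(x_U)\notin\overline{W}$ for all $U$. Hence $f$ is continuous.

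The only delicate point is Step 2. The neighbourhood $V$ witnessing discontinuity is stated in terms of $f(\eta_U)$, but the hypothesis only controls images of points of $\Gamma$. This is why we pass to the smaller open set $W$ with $\overline{W}\subseteq V$ using regularity of $Y$: $f(\eta_U)$ then lies in an open set disjoint from $W$, and points of $\Gamma$ close enough to $\eta_U$ inherit this property. Compactness of $X$ is not needed beyond $\Gamma$ being dense in $X$ and $Y$ being Hausdorff, hence regular.
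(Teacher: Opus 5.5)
Your proof is correct, and it follows a route close to the paper's but not identical to it. The paper argues directly. Given a net $x_\alpha\to\xi$ in $X$, it uses compactness of $Y$ to reduce to a subnet with $f(x_\alpha)\to\eta$. It then builds a diagonal net in $\Gamma$, indexed by triples $(\alpha,U,V)$, that converges to $\xi$ in $X$ and to $\eta$ in $Y$, and gets $\eta=f(\xi)$ from the hypothesis and uniqueness of limits in the Hausdorff space $Y$. You instead argue by contradiction and replace the subnet extraction by regularity of $Y$. The open sets $W$ and $Y\setminus\overline{W}$ separate $f(\xi)$ from the bad values $f(\eta_U)$, and this is what lets nearby points of $\Gamma$ inherit the bad property.

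Your version has two advantages. First, it makes explicit a step the paper compresses into ``since $\Gamma$ is dense in $X$, the intersection $\Gamma\cap U\cap V$ is not empty.'' Density alone does not give a point of $\Gamma$ lying in $U$ whose image lies in $V$. One must apply the hypothesis at the possibly non-$\Gamma$ point $x_\alpha$, exactly as you do at $\eta_U$ in Steps 1--2. Second, it shows that what is really used about $Y$ is regularity, whereas the paper uses compactness and Hausdorffness of $Y$ directly; neither argument needs compactness of $X$.

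Two small fixes:
\begin{itemize}
\item Let $U$ range over \emph{open} neighbourhoods of $\xi$, since otherwise a net in $\Gamma$ converging to $\eta_U$ need not eventually lie in $U$. This is harmless, because open neighbourhoods are still directed and the net $x_U$ still converges to $\xi$.
\item Your closing phrase ``$Y$ being Hausdorff, hence regular'' should say \emph{compact} Hausdorff, since Hausdorff alone does not imply regular. So compactness of $Y$ is in fact used, through regularity.
\end{itemize}
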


\begin{proof}

    Fix a net $(x_{\alpha})_{\alpha}$ in $X$ converging to $\xi\in X$. We want to show that $(f(x_{\alpha}))_{\alpha}$ converges to $f(\xi)\in Y$. Since $Y$ is compact, it will suffice to show that for every convergent subnet of $(f(x_{\alpha}))_{\alpha}$ we have that the subnet must converges $f(\xi)$. So, we assume without loss of generality that $(f(x_{\alpha}))_{\alpha}$ converging to some point $\eta\in Y$, and we will show that $\eta = f(\xi)$.
    
    Take neighborhoods $U$ of $\xi$ in $X$ and $V$ of $\eta$ in $Y$, and choose $\alpha$ large enough such that $x_{\alpha}$ are in $U$ and $f(x_{\alpha})$ are in $V$. Observe that since $\Gamma$ is dense in $X$, the intersection $\Gamma\cap U\cap V$ is not empty. Thus, for each $\alpha$ we may choose a point $x_{\alpha, U, V}\in\Gamma\cap U\cap V$ and define a directed partial order on the triples $(\alpha, U, V)$ by using the partial order on $\alpha$'s and inverse inclusion for $U$ and $V$. Then, since $f$ is an extension of the identity map on $\Gamma$, the net $(x_{\alpha, U, V})_{\alpha, U, V}$ in $\Gamma$ converges to both $\xi$ in the topology on $X$ and tp $\eta$ in the topology on $Y$. Thus, by assumption, since this net converges to $\xi$ in $X$, it must also converge to $f(\xi)$ in $Y$. Since $Y$ is Hausdorff, we get that $\eta = f(\xi)$.
\end{proof}

\vspace{6pt plus 2pt minus 2pt}

\subsection{Operator algebras and representations.} \label{ss:op-alg-prelim} An operator algebra $\mathcal{A}$ is a norm-closed subalgebra of bounded operators $\mathbb{B}(\mathcal{H})$ on the Hilbert space $\mathcal{H}$. If, furthermore, $\mathcal{A}$ is closed under adjoints, we say that $\mathcal{A}$ is a $C^{\ast}$-algebra. We denote by $\mathbb{B}(\mathcal{H})$ and $\mathbb{K}(\mathcal{H})$ the $C^{\ast}$-algebra of all bounded operators and of all compact operators, respectively. The latter is given by the norm closure of finite rank operators on $\mathcal{H}$. When $\A$ is a C*-algebra, we will denote by $\sp(\A)$ the irreducible representation spectrum of $\A$, which is the set of equivalence classes of irreducible representations of $\A$ up to unitary equivalence.

Next we will discuss some of the basic theory of operator algebras and operator systems relevant to our paper. We refer the reader to \cite{PAVE03} for additional details, and to \cite{ARWI69, ARWI08} for a more in-depth exposition to the theory of non-commutative boundaries. 

Recall that an operator system $\S$ is a unital $\ast$-closed subspace of $\B(\mathcal{K})$ on some Hilbert space $\mathcal{K}$. We say that a linear map $\pi\colon \S\to \B(\H)$ \textit{positive} if it sends the cone of positive elements to the cone of positive elements, and \textit{completely positive} if its matrix ampliations 
\begin{equation}
    \begin{split}
        \pi^{(n)}\colon M_{n}(\S) & \to M_n(\mathbb{B}(\H)), \\
        [s_{ij}]^{n}_{i, j = 1} & \mapsto [\pi(s_{ij})]^{n}_{i, j = 1}
    \end{split}
\end{equation}
are positive for each $n\in\N$, where we identify $M_n(\mathbb{B}(\H)) \cong \mathbb{B}(\H^{\oplus n})$ (so that positivity and the norm are understood). We will say that a unital completely positive map $\pi\colon S\to \B(\H)$ has the \textit{unique extension property} if $\pi$ has a \emph{unique} unital completely positive extension $\tilde{\pi}\colon C^{\ast}(\S)\to \B(\H)$ such that $\tilde{\pi}$ is multiplicative. If, moreover, $\tilde{\pi}$ is an irreducible $*$-representation, we call $\tilde{\pi}$ a \textit{boundary} representation. 

Boundary representations of operator systems are crucial since they allow one to minimally recover the matrix norms on $\mathcal{S}$, providing us with a non-commutative analogue of the Krein-Milman theorem from classical convexity \cite{DAKE15}.

\begin{theorem}[{\cite[Theorem 7.1]{ARWI08}, \cite[Theorem 3.4]{DAKE15}}]
\label{tm:norm_via_bdry_rep}
    Let $\mathcal{S}$ be an operator system. Then, for $n\in\N$ and $[s_{ij}]_{i, j}\in M_{n}(\mathcal{S})$, ranging over all boundary representations $\tilde{\pi} : C^*(\S) \rightarrow \B(\H)$ for $\mathcal{S}$, we have that
    \begin{equation}
        \|[s_{ij}]\| = \sup_{\tilde{\pi}}\|[\tilde{\pi}(s_{ij})]\|.
    \end{equation}
\end{theorem}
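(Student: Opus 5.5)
The plan is to prove the two inequalities separately; essentially all the content lies in the inequality ``$\leq$''.

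\textbf{The easy direction ``$\geq$''.} Every boundary representation $\tilde\pi$ is a $*$-homomorphism of $C^*(\S)$, hence completely contractive. Therefore $\|[\tilde\pi(s_{ij})]\|\le \|[s_{ij}]\|$ for every $\tilde\pi$, and taking the supremum gives $\geq$.

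\textbf{Reduction to $n=1$ and to a self-adjoint element.} First I would pass to the operator system $M_n(\S)\subseteq M_n(C^*(\S))$. The boundary representations of $M_n(\S)$ are exactly the ampliations $\tilde\pi^{(n)}$ of boundary representations $\tilde\pi$ of $\S$. This holds because $M_n(C^*(\S))\cong C^*(\S)\otimes M_n$, its irreducible representations are of the form $\tilde\pi\otimes \id_{M_n}$, and the unique extension property transfers through the matrix units, which lie in $M_n(\S)$. So it suffices to treat $n=1$. Next, for a single $s\in\S$ I would replace $s$ by the self-adjoint element
\begin{equation*}
h=\begin{pmatrix}0&s\\ s^*&0\end{pmatrix}\in M_2(\S),
\end{equation*}
which has the same norm, and appeal to the previous reduction once more. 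Now take $h=h^*$. The norm $\|h\|$ equals $\max|f(h)|$ over the state space of $C^*(\S)$ restricted to $\S$. This state space is weak$^*$-compact and convex, and $f\mapsto |f(h)|$ is the maximum of two affine continuous functions. By Krein--Milman the maximum is therefore attained at an extreme point, i.e.\ at a pure state $f$ of $\S$.

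\textbf{Dilating the pure state.} The key step is to show that every pure state $f$ of $\S$ dilates to a boundary representation. Concretely, I want an irreducible $*$-representation $\tilde\pi\colon C^*(\S)\to\B(\H)$ such that $\tilde\pi|_{\S}$ has the unique extension property, together with a unit vector $\xi$ satisfying $f(s)=\langle\tilde\pi(s)\xi,\xi\rangle$ for all $s\in\S$. Granting this, we get $|f(h)|\le\|\tilde\pi(h)\|$, which yields ``$\leq$''. To construct $\tilde\pi$, I would follow Dritschel--McCullough and Davidson--Kennedy:
\begin{enumerate}[(i)]
\item Repeatedly dilate the ucp map $f$, using transfinite induction or Zorn's lemma, to a \emph{maximal} ucp map $\varphi$, i.e.\ one admitting only trivial dilations.
\item Show that maximal ucp maps are exactly those with the unique extension property.
\item Arrange the dilation procedure so that purity is preserved at every stage. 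Then $\varphi$ is a pure maximal map, and its unique multiplicative extension $\tilde\pi$ is irreducible.
\end{enumerate}

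\textbf{Main obstacle.} I expect step (iii) to be the hardest part. A naive maximal dilation of a pure state need not be irreducible, because the dilation may acquire reducing subspaces. The first thing I would try is the Davidson--Kennedy device. At each stage one dilates only ``one-step'' in the direction of a single element $s\in\S$ and a single vector, choosing the dilation to be extremal among the dilations of the given pure map. One then checks that the set of dilations of a pure map is a face, so that an extremal dilation stays pure. Finally, one passes to a direct limit over all pairs $(s,\xi)$, using that purity passes to such limits.
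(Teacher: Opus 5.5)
Your overall architecture is the Davidson--Kennedy one. The paper itself gives no argument; it only cites Arveson for separable $\S$ and Davidson--Kennedy in general. Your preliminary reductions are sound: the matrix units lie in $M_n(\S)$, so boundary representations of $M_n(\S)$ are ampliations of boundary representations of $\S$, and a pure state attains $\|h\|$.

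The gap is in step (iii), at exactly the justification you give. Knowing that the dilations of a pure $\varphi$ to $\H\oplus\mathbb{C}$ form a face of $\mathrm{UCP}(\S,\B(\H\oplus\mathbb{C}))$ only shows that an extremal dilation is an extreme point of that convex set. It does not make it pure in the completely positive order, and for maps into $\B(K)$ with $\dim K\ge 2$ these notions differ. For example, let $\S=A$ be a C*-algebra and let $f,g$ be inequivalent pure states.
\begin{itemize}
\item $\operatorname{diag}(f,g)\colon A\to M_2$ is a dilation of $f$.
\item It is extreme among such dilations: any completely positive $\begin{pmatrix} f&\beta\\ \beta^*&g\end{pmatrix}$ has $\beta=0$, by Schur's lemma applied to the disjoint GNS representations.
\item It is not pure, since $\operatorname{diag}(f,0)\le\operatorname{diag}(f,g)$.
\end{itemize}
So the procedure as you describe it can destroy purity at the very first step.

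What is missing is the actual Davidson--Kennedy lemma: a pure $\varphi$ has a pure dilation that is maximal at $(s,\xi)$. Its proof uses maximality in an essential way.
\begin{itemize}
\item If $\varphi$ is already maximal at $(s,\xi)$, you do not dilate at all.
\item Otherwise, let $\ell(\psi)=\langle\psi(s)\xi,e\rangle$, where $e$ spans the added copy of $\mathbb{C}$. Let $M_0>0$ be the supremum of $|\ell|$ over dilations on $\H\oplus\mathbb{C}$. By rotation invariance, the set $F=\{\ell=M_0\}$ is a nonempty closed face of these dilations, and its elements are maximal at $(s,\xi)$. Take $\psi\in\operatorname{ext}F$.
\item Given $\theta\le\psi$, purity of $\varphi$ only tells you that the $\H$-corner of $\theta$ is $t\varphi$. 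To invoke extremality you must also show $\theta(1)=t\cdot 1$, so that $\theta/t$ and $(\psi-\theta)/(1-t)$ are again elements of $F$.
\item This comes from maximality. Renormalize $\theta$ and $\psi-\theta$ by upper-triangular operators on $\H\oplus\mathbb{C}$ into unital dilations of $\varphi$, and bound their $\ell$-values by $M_0$. A Cauchy--Schwarz estimate then forces $\theta(1)_{22}=t$ and $\theta(1)_{12}=0$.
\end{itemize}
This is exactly what fails in the example above, where $\theta=\operatorname{diag}(f,0)$. There $\operatorname{diag}(f,g)$ lies in $F$ only when $f$ is already maximal at $(s,1)$, which is the case in which you should not dilate.

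With this lemma, the rest of your plan goes through. In particular, purity does pass to inductive limits, simply by compressing $\theta\le\varphi_\infty$ to each stage.
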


As our goal is to compute the $C^{\ast}$-envelope of the tensor algebra of a random walk, so we recall several definitions from non-commutative boundary theory of operator algebras. For an operator algebra $\mathcal{A}$, we call a pair $(\mathcal{B}, \iota)$ a \textit{$C^{\ast}$-cover} of $\mathcal{A}$ if $\mathcal{B}$ is a $C^{\ast}$-algebra and $\iota\colon\mathcal{A}\to \mathcal{B}$ is a completely isometric homomorphism such that $C^{\ast}(\iota(\mathcal{A})) = \mathcal{B}$. There always exists a unique smallest $C^{\ast}$-cover called the \textit{$C^{\ast}$-envelope} of $\mathcal{A}$ and denoted $(C^{\ast}_{\env}(\mathcal{A}), \kappa)$, which satisfies the following universal property: if $(\mathcal{B}, \iota)$ is any other $C^{\ast}$-cover of $\mathcal{A}$, then the map $\iota(a)\mapsto\kappa(a)$ extends uniquely to a surjective $\ast$-homomorphism $\pi\colon\mathcal{B}\to C^{\ast}_{\env}(\mathcal{A})$ such that $\pi\circ\iota = \kappa$. 

Given an operator algebra $\mathcal{A}$ and a C*-cover $(\iota,\mathcal{B})$, we will often suppress $\iota$, and consider $\A$ as a subalgebra of $\mathcal{B} = C^*(\A)$. We will call an ideal $\J \lhd \mathcal{B}$ such that the quotient map $q\colon \mathcal{B}\to \mathcal{B}/ \J$ restricts to a completely isometric map on $\mathcal{A}$ a \textit{boundary} ideal for $\mathcal{A}$. If, additionally, $\J \lhd \mathcal{B}$ is an ideal which contains every other boundary ideal for $\mathcal{A}$, we call $\J$ the \textit{Shilov} ideal for $\mathcal{A}$ in $\mathcal{B}$. It turns out that when $\J$ is the Shilov ideal, the natural quotient map $q\colon \mathcal{B}\to \mathcal{B}/\J$ realizes $\mathcal{B}/\I$ as the $C^{\ast}$-envelope of $\mathcal{A}$ (see, e.g. \cite{HAMA79}).

We may always turn an operator algebra $\mathcal{A}$ into an operator system by first unitizing the operator algebra $\mathcal{A}^1$, and then generating the operator system $\mathcal{A}^1 + (\mathcal{A}^1)^*$ from it. By Meyer's theorem \cite[Corollary 3.2]{MERA01}, the unitization $\mathcal{A}^1$ is independent of the initial completely isometric embedding of $\mathcal{A}$ in $\mathbb{B}(\K)$, and by \cite[Proposition 2.12]{PAVE03} the operator system generated by a unital operator algebra is also independent of embedding. This process of moving from operator algebras to operator systems is functorial in the sense that for any completely contractive representation $\pi\colon \mathcal{A} \rightarrow \mathbb{B}(\H)$, by \cite[Corollary 3.3]{MERA01} there is always a uniquely determined unital completely contractive homomorphism $\pi^1\colon \mathcal{A}^1 \rightarrow \mathbb{B}(\H)$, which in turn promotes by \cite[Proposition 3.5]{PAVE03} to a unital completely positive map $\widetilde{\pi^1}\colon \mathcal{A}^1 + (\mathcal{A}^1)^* \rightarrow \mathbb{B}(\H)$. This allows us to apply non-commutative boundary theory of operator systems to the theory of (potentially non-unital) operator algebras, as has been fleshed out by Dor-On and Salomon in \cite[Proposition 2.4, 2.5]{DOSA18}. For instance, we have corresponding definitions for the unique extension property and boundary representations. A completely contractive homomorphism $\pi\colon \mathcal{A} \to \B(\H)$ has the \textit{unique extension property} if $\pi$ has a unique contractive completely positive extension $\tilde{\pi}\colon C^{\ast}(\mathcal{A})\to \B(\H)$, and this $\tilde{\pi}$ is multiplicative. In case $\widetilde{\pi}$ is irreducible, we still call $\tilde{\pi}$ a \textit{boundary representation}. Thus, when discussing potentially non-unital operator algebras, for the purpose of studying boundary representations, we may reduce any problem about boundary representations with respect to operator algebras, to a problem about boundary representations with respect to operator systems.

The $C^{\ast}$-envelope is often referred to as the non-commutative Shilov boundary whereas the set of all boundary representations is referred to as a non-commutative Choquet boundary. They comprise the non-commutative analogues of the respective notions from classical function theory (see, e.g. \cite{GATE05}). Theorem \ref{tm:norm_via_bdry_rep} is then the non-commutative counterpart of density of Choquet boundary in Shilov boundary, and whenever wee are given a completely isometric representation $\pi\colon \mathcal{A} \rightarrow \mathbb{B}(\H)$ with the unique extension property, we have realize the C*-envelope of $\A$ as the C*-algebra $C^*(\pi(\mathcal{A}))$ generated by the image of $\pi$. 

We will work with completely strongly peaking irreducible representations as studied by Arveson in \cite{ARWI11}. An irreducible $*$-representation $\pi\colon C^{\ast}(\mathcal{A})\to\mathbb{B}(\H)$ is \textit{completely strongly peaking} for $\A$ if there exists $n\in\N$ and an operator matrix $T = [T_{ij}]\in M_{n}(\mathcal{A})$ such that
\begin{equation}
\label{eq:completely_peaking}
    \|\pi^{(n)}(T)\| > \sup_{\rho\nsim\pi}\|\rho^{(n)}(T)\|,
\end{equation}
where the supremum is taken over all irreducible representations $\rho$ inequivalent to $\pi$, denoted $\rho\nsim\pi$. We will be able to circumvent having to work directly with boundary representations since we will work with irreducible representations supported on ideals of compact operators. In this case, by \cite[Theorem 7.2]{ARWI11} we know that such irreducible representations are boundary if and only if they are completely strongly peaking representations.

We now discuss the construction of the tensor and Toeplitz algebras for a random walk. Let $\Gamma$ be a countable discrete group, $\mu$ be an admissible probability measure on $\Gamma$, and $P$ the associated Markov kernel. We denote by $\Gr(P)$ the set of edges of the random walk on $\Gamma$, which are exactly the pairs $(x,y)$ for which $P(x,y)>0$. For $z\in\Gamma$, we denote the \textit{space-time set} at a base point $z$,
\begin{equation*}
    \ST_{z} := \{(y, m)\in\Gamma\times\Z_{+} \,:\, P^{m}(y, z) > 0\}.
\end{equation*}

\begin{remark}
Note that while the notation seems similar to that of $\ST$, the space-time set $\ST$ is equal to the space-time set $\ST_e$ at the base point $e$ only for random walks with symmetrically supported measures. The importance in distinguishing these two sets has to do with the orientation chosen for the operator algebras associated with random walks, and will become apparent later on.
\end{remark}

 We will call the Hilbert space
\begin{equation*}
    \mathcal{H} = \mathcal{H}(\Gamma, \mu) := \bigoplus_{z\in\Gamma}\ell^{2}(\ST_{z})
\end{equation*}
the \textit{Fock Hilbert space} of the random walk $(\Gamma, \mu)$, where $\{e^{(m)}_{x, z}\}_{(x, m)\in\ST_{z}}$ is the standard orthonormal basis of the space
\begin{equation*}
    \mathcal{H}_{z} = \mathcal{H}_{z}(\Gamma, \mu) := \ell^{2}(\ST_{z})
\end{equation*}
for each $z\in\Gamma$. We next define the space-time shift operators
\begin{equation*}
    \begin{split}
        S^{(n)}_{x, y}\colon \mathcal{H}(\Gamma, \mu) & \to\mathcal{H}(\Gamma, \mu), \\
        e^{(m)}_{y', z} & \mapsto\delta_{y, y'}\sqrt{\frac{P^{n}(x, y)P^{m}(y, z)}{P^{n + m}(x, z)}}e^{(n + m)}_{x, z}
    \end{split}
\end{equation*}
for $n, m\in\N$ and $P^{n}(x, y) > 0$, where the adjoints are easily shown to be given by
\begin{equation*}
    (S^{(n)}_{x, y})^{\ast}(e^{(n + m)}_{x', z}) = \delta_{x, x'}\sqrt{\frac{P^{m}(x, y)P^{n}(y, z)}{P^{m + n}(x, z)}}e^{(n)}_{y, z}.
\end{equation*}

\begin{definition}

    Let $\Gamma$ be a countable discrete group, and $\mu$ be an admissible probability measure on $\Gamma$. Then, 
    \begin{enumerate}[(i)]
        \item the $C^{\ast}$-algebra
            \begin{equation*}
                \mathcal{T}(\Gamma, \mu) := C^{\ast}(S^{(n)}_{x, y} \,|\, P^{n}(x, y) > 0,\, n\in\N) \subset \mathbb{B}(\mathcal{H}(\Gamma, \mu))
            \end{equation*}
            is called the \textit{Toeplitz algebra} associated with the random walk $(\Gamma, \mu)$;
        \item the norm-closed operator algebra
            \begin{equation*}
                \mathcal{T}^{+}(\Gamma, \mu) := \overline{\Alg}^{\|\cdot\|}(S^{(n)}_{x, y} \,|\, P^{n}(x, y) > 0,\, n\in\N) \subset \mathbb{B}(\mathcal{H}(\Gamma, \mu))
            \end{equation*}
            is called the \textit{tensor algebra} associated with the random walk $(\Gamma, \mu)$.
    \end{enumerate}
\end{definition}

Next, we define the relevant ideal of compact operators and the corresponding quotient by it of Toeplitz algebra.

\begin{definition}

    Let $\Gamma$ be a countable discrete group, and $\mu$ be a finitely supported admissible probability measure on $\Gamma$. Then, the C*-subalgebra of compact operators
    \begin{equation*}
        \mathcal{I}(\Gamma, \mu) := \bigoplus_{z\in \Gamma}\K(\mathcal{H}_{z}(\Gamma, \mu))
    \end{equation*}
    is called the \textit{Viselter ideal} of $\mathcal{T}(\Gamma, \mu)$, and the quotient $C^{\ast}$-algebra
    \begin{equation*}
        \mathcal{O} = \mathcal{O}(\Gamma, \mu) := \mathcal{T}(\Gamma, \mu)/\mathcal{I}(\Gamma, \mu)
    \end{equation*}
    is called the \textit{Cuntz--Pimsner algebra} associated with the random walk $(\Gamma, \mu)$.
\end{definition}

From \cite[Proposition 4.4]{DOAD21}, we see that $\mathcal{I}(\Gamma, \mu)$ is indeed an ideal in the Toeplitz algebra $\mathcal{T}(\Gamma, \mu)$ whenever $\mu$ is finitely supported. Similar to \cite[Section 3]{DOMA17}, using the following short exact sequence 
\begin{equation*}
    0\to \mathcal{I}(\Gamma, \mu) \to \mathcal{T}(\Gamma, \mu) \to \mathcal{O}(\Gamma, \mu) \to 0
\end{equation*}
and the discussion preceding \cite[Theorem 1.3.4]{ARWI76}, we see that given any representation $\rho$ of the Toeplitz algebra $\mathcal{T}(\Gamma, \mu)$, we have that $\rho$ decomposes uniquely into a direct sum $\rho = \rho_{\mathcal{I}}\oplus\rho_{\mathcal{O}}$ of representations, where $\rho_{\mathcal{I}}$ is the unique extension of $\rho|_{\mathcal{I}(\Gamma, \mu)}$ to $\mathcal{T}(\Gamma, \mu)$, and $\rho_{\mathcal{O}}$ annihilates $\mathcal{I}(\Gamma, \mu)$. If moreover $\rho$ is irreducible, then one of the summands is necessarily trivial.

\section{Space-time and Ratio-limit boundaries}
\label{sec:space_time_martin}

Let $\Gamma$ be a countable discrete group, and let $\mu$ be a finitely supported admissible probability measure on $\Gamma$. We construct the \textit{space-time set} $\ST$ as follows. Points in $\ST$ are the elements $(x, n)$ of $\Gamma\times\mathbb Z_+$ such that the random walk can reach $x$ from $e$ in $n$-th step, i.e. $P^n(e,x)=\mathbb P[X_n = x \, | \, X_0 = e] > 0$. We think of $\ST$ as a directed graph by adding an edge between $(x, m)$ and $(y, n)$ if $n = m + 1$ and $\mu(x^{-1}y) > 0$.

Another way of thinking about the directed graph on $\ST$ is to construct it inductively as follows. One starts with the vertex $(e, 0)$ and add an edge connecting $(e, 0)$ to all $(x, 1)$ such that $\mu(x) > 0$. Then, at time $m$, for all vertices $(x, m)$, one adds an edge connecting $(x, m)$ to all $(y, m + 1)$ whenever $\mu(x^{-1}y) > 0$.

We now define a Markov chain $(Y_k)_{k\in\Z_{+}}$ on $\ST$ by specifying a Markov kernel $P_{\ST}$ on $\ST$. That is, for all $(x, m),(y,n)\in \ST$, we define the Markov kernel $P_{\ST}$ with the state space $\ST$ via
\begin{equation*}
    P_{\ST}((x, m), (y, n))= \delta_{m + 1, n} \cdot \mu(x^{-1}y).
\end{equation*}
We call the associated Markov chain $(Y_k)_{k\in\Z_{+}}$ the \emph{space-time Markov chain}. Note that the set of pairs $((x,m),(y,n))$ for which $P_{\ST}((x, m), (y, n))>0$ are exactly the pairs corresponding to edges in the directed graph structure on $\ST$. By construction, we have that $P^n_{ST}((e,0),(y,n))>0$ for all $(y,n) \in \ST$, so that $G((e,0),(y,n))>0$ for any $(y,n) \in \ST$. Thus, the space-time Markov chain $(Y_k)_{k\in \Z_+}$ is a transient accessible Markov chain, and therefore admits a Martin compactification.

\begin{definition}
The space-time Martin boundary $\partial_{\ST}\Gamma$ of the random walk $(\Gamma,\mu)$ is the $1$-Martin boundary $\partial_M\ST$ of the space-time Markov chain $(Y_k)_{k\in\Z_{+}}$ on $\ST$.
\end{definition}

\medskip
In order to study the space-time Martin boundary more precisely, we first note that the Green function associated with the Markov chain $(Y_k)_{k \in \N}$ is given by
\begin{equation*}
    G((x, m), (y, n)) = \sum_{k\geq 0}P^k_{\ST}((x, m), (y, n)).
\end{equation*}
Since the Markov chain $(Y_k)_{k \in \N}$ can only move forward in the time factor $\mathbb Z_+$, the only possible $k$ such that
$P_{\ST}^k ((x, m), (y, n))) > 0$ is $k = n - m$.
Now, to reach $(y, n)$ from $(x, m)$ in $(n - m)$ steps, the chain $(Y_k)_{k \in \N}$ needs to exactly follow a trajectory for the random walk of length $n - m$ from $x$ to $y$. Thus, we find that
\begin{equation*}
    G((x, m), (y, n)) = P_{\ST}^{n - m}((x, m), (y, n)) = P^{n - m}(x, y).
\end{equation*}

For each $(x,m) \in \ST$ we denote by $K_{\ST}((x,m), \cdot)$ the extension to a continuous function on the space-time compactification $\Delta_{\ST} \Gamma$ given on the dense set $\ST$ by 
$$
K_{\ST}((x,m), (y,n)) = \frac{P^{n - m}(x, y)}{P^{n}(e, y)}.
$$

We then have the following reformulation of convergence to a point in the space-time boundary. A sequence $(y_k, n_k)$ in $\ST$ converges to a point $\xi$ in the space-time Martin boundary $\partial_{\ST}\Gamma$ if and only if for all $(x, m)$ in $\ST$,
    \begin{equation*}
        \frac{P^{n_k - m}(x, y_k)}{P^{n_k}(e, y_k)}\underset{k\to \infty}{\longrightarrow}K_{\ST}((x, m),\xi)
    \end{equation*}
In other words, we need to find all possible limits of ratios $P^{n - m}(x, y)/P^{n}(e, y)$ when $(y, n)$ goes to infinity on $\ST$. On the boundary, we have the following description of sequential convergence by Proposition \ref{prop:sub-Markov:Martin:convergence}. 

\begin{corollary} 
\label{c:pointwiseconvergencespacetime}

    A sequence $(\xi_n)_{n\in \N}$ in $\partial_{\ST} \Gamma$ converges to a point $\xi$ in $\partial_{\ST} \Gamma$ if and only if we have that $K_{\ST}((x, m), \xi_n) \rightarrow K_{\ST}((x, m), \xi)$ for all $(x, m) \in \ST$.
\end{corollary}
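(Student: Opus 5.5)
This is a direct specialization of Proposition~\ref{prop:sub-Markov:Martin:convergence} to the space-time Markov chain, so the plan is to verify its hypotheses for $(Y_k)_{k\in\Z_+}$ and then rewrite the conclusion in the space-time notation.

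\textbf{Hypotheses.} The state space $\ST$ is countable, since it sits inside $\Gamma\times\Z_+$ with $\Gamma$ countable. The kernel $P_{\ST}$ is Markov, hence in particular sub-Markov. It is accessible from the base point $(e,0)$: every $(y,n)\in\ST$ satisfies $P^n_{\ST}((e,0),(y,n))=P^n(e,y)>0$ by the definition of $\ST$. It is transient because the time coordinate strictly increases at each step, so
\[
G((x,m),(y,n))=P^{n-m}(x,y)\le 1
\]
is finite, as computed above.

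\textbf{Conclusion.} By definition, $\partial_{\ST}\Gamma=\partial_M\ST$ with base point $(e,0)$. The Martin kernel
\[
K((x,m),(y,n))=\frac{G((x,m),(y,n))}{G((e,0),(y,n))}=\frac{P^{n-m}(x,y)}{P^n(e,y)}
\]
is exactly $K_{\ST}((x,m),(y,n))$. Their continuous extensions to the boundary therefore agree, and Proposition~\ref{prop:sub-Markov:Martin:convergence} gives the stated equivalence.

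\textbf{Main obstacle.} The only point needing care lies inside Proposition~\ref{prop:sub-Markov:Martin:convergence} itself. It is the continuity and injectivity of $\xi\mapsto K_{\ST}(\cdot,\xi)$ on the boundary into a compact product of intervals, obtained from boundedness of the Martin kernels via \cite[Lemma 4.1]{SAST97}. Compactness of the boundary then makes this map a homeomorphism onto its image. Everything here is already provided by that proposition.
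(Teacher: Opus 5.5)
Your proposal is correct and matches the paper's argument. The paper obtains this corollary directly from Proposition~\ref{prop:sub-Markov:Martin:convergence}, after noting that the space-time chain is transient and accessible from $(e,0)$ with Martin kernel $P^{n-m}(x,y)/P^{n}(e,y)$; your explicit checks of countability, the Markov property, accessibility, and transience via $G((x,m),(y,n))=P^{n-m}(x,y)\le 1$ simply spell out those hypotheses in more detail.
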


\medskip
\textbf{Standing assumptions.}
We will assume henceforth that $\mu$ is finitely supported and admissible, and, except if stated otherwise, we will assume in what follows that the random walk is \textit{lazy}, i.e. that $\mu(e) > 0$. This implies that for every $(x, m)\in \ST$ and $(x, n)\in\ST$ for every $n\geq m$.

We now recall the definition of the ratio limit compactification that was introduced in \cite{WOWO21} and \cite{DOAD21}. Let $\Gamma$ be a finitely generated group, and let $\mu$ be a probability measure on $\Gamma$. When the limit on the right hand side exists, define the ratio limit kernel as 
\begin{equation*}
    H(x, y) := \lim_{n\to\infty}\frac{P^{n}(x, y)}{P^{n}(e, y)}
\end{equation*}
for every $x, y$ in $\Gamma$. In general, we do not know if this limit exists, and we say that $(\Gamma,\mu)$ satisfies the strong ratio limit property (SRLP) if this limit exists for every $x,y\in \Gamma$ as $n$ tends to infinity. 

SRLP was already considered in the work of Chung and Erd\" os \cite{CE51}, followed by works by Kingman--Orey in \cite{KO64} and Kesten \cite{Kes63}. Moreover, we do know that the limit exists for instance in the context of amenable groups (see, e.g. \cite[Theorem 1]{Avezratiolimit}, \cite[Theorem 1.1]{DOSH24}). Beyond the situation of amenable groups, SRLP is automatically satisfied in the presence of a local limit theorem. For instance when we have the asymptotic behavior
\begin{equation*}
    P^n(e, x)\sim \beta(x)R^{-n}n^{-\alpha}
\end{equation*}
as $n\to \infty$, where $R$ is the inverse of the spectral radius of $\mu$. In many cases and in particular in the examples we study in this paper, such a local limit theorem holds. For instance, admissible aperiodic symmetric random walks with finite support on finitely generated hyperbolic groups admit a local limit theorem as above with exponent $\alpha = 3/2$ (see \cite[Theorem~1.1]{GouezelLLT}).

\medskip
The ratio limit compactification $\Delta_{RL}\Gamma$ is constructed similarly to the $\lambda$-Martin compactification by replacing $\lambda$-Martin kernels $K(x, y \,|\, \lambda)$ with ratio limit kernels $H(x, y)$. The remainder of $\Gamma$ in the ratio-limit compactification is called the ratio limit boundary and is denoted by $\partial_{RL}\Gamma$. Thus, up to homeomorphism $\Delta_{RL}\Gamma$ is the unique smallest compactum (up to homeomorphism extending the identity on $\Gamma$) to which the functions $y \mapsto H(x,y)$ extends continuously for every $x\in \Gamma$. A sequence $y_n$ in $\Gamma$ converges to a point $\xi\in\partial_{RL}\Gamma$ if and only if for every $x\in \Gamma$, we have that $H(x, y_n)$ converges to a the limit kernel $H(x, \xi)$ provided by the definition of the ratio-limit compactification.

We now discuss a separated version of the ratio-limit compactification. Let the \textit{ratio-limit radical} to be the set
\begin{equation*}
    R_\mu = \{y\in\Gamma \,|\, H(x, y) = H(x, e) \mbox{ for all } x\in\Gamma \}.
\end{equation*}
Then, $R_\mu$ is a subgroup of $\Gamma$, see \cite[Proposition~3.2]{DOAD21}. Moreover, if we assume that $\mu$ is symmetric, then it is a normal subgroup, see \cite[Proposition~2.4]{DDG+}, and the ratio limit kernels $H(x, y)$ factor through $R_\mu$.

\begin{definition}
The reduced ratio limit compactification $\Delta^{r}_{RL}\Gamma$ is the unique smallest compactification of $\Gamma/R_\mu$ that make the (well-defined) functions $y R_{\mu} \mapsto H(x, y)$ continuous. The reduced ratio limit boundary $\partial^{r}_{RL}\Gamma$ is the set difference of $\Delta^{r}_{RL}\Gamma \setminus [\Gamma/R_\mu]$.
\end{definition}

Our next goal is to compare the ratio limit compactification with the space-time Martin boundary in the presence of SRLP.

Fix $y\in \Gamma$, and let $n_k$ be a sequence that goes to infinity. Then for $(x,m)\in \ST$,
\begin{equation*}
    \frac{P^{n_k - m}(x, y)}{P^{n_k}(e, y)} = \frac{P^{n_k - m}(x, y)}{P^{n_k}(x, y)}\frac{P^{n_k}(x, y)}{P^{n_k}(e, y)},
\end{equation*}
so that,
\begin{equation*}
    \frac{P^{n_k}(x, y)}{P^{n_k}(e, y)}\underset{k\to \infty}{\longrightarrow}H(x, y).
\end{equation*}
By a result of Gerl \cite{Gerl1, Gerl2} (see also \cite{WOWO21}), for every $x, y\in\Gamma$, we have
\begin{equation*}
    \frac{P^{k + 1}(x, y)}{P^k(x, y)}\underset{k\to \infty}{\longrightarrow}R^{-1},
\end{equation*}
so that,
\begin{equation*}
    \frac{P^{n_k - m}(x, y)}{P^{n_k}(x, y)}\underset{k\to \infty}{\longrightarrow}R^m.
\end{equation*}
Therefore, for any $(x,m) \in \ST$, we get 
\begin{equation*}
    \frac{P^{n_k - m}(x, y)}{P^{n_k}(e, y)}\underset{k\to \infty}{\longrightarrow}R^{m}H(x, y),
\end{equation*}
where $n_k$ is a sequence going to infinity. Thus, we see that $(y, n_k)$ converges to a point $\Phi(y)$ in the space-time Martin boundary. Thus, we obtained a well-defined map $\Phi$ from $\Gamma$ to the space-time Martin boundary $\partial_{\ST}\Gamma$ by setting $\Phi(y)$ to be the point in $\partial_{\ST}\Gamma$ corresponding to the function $h_y\colon \ST \rightarrow \mathbb{R}_+$ given by $h_y(x, m) = R^mH(x, y)$.

Although $y\mapsto H(x,y)$ is in general not one-to-one, we see that $y \mapsto \Phi(y)$ is not one-to-one. However, by definition of $R_{\mu}$ we know that $\Phi$ factors through $R_\mu$ to yields a one-to-one map $\Tilde{\Phi}$ from $\Gamma/R_\mu$ to the space-time Martin boundary as we now show.

\begin{lemma}

    The map $\Tilde{\Phi}$ is an injective map from $\Gamma/R_{\mu}$ to $\partial_{\ST}\Gamma$.
\end{lemma}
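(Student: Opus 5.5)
The plan is to check two things: that $\Tilde{\Phi}$ is well defined on cosets, and that it separates distinct cosets. Both reduce to the observation that $\Phi(y)$ is determined by the kernel $h_y(x,m)=R^mH(x,y)$ on $\ST$. By Corollary \ref{c:pointwiseconvergencespacetime}, and more basically by the construction of the Martin compactification as the closure of $\ST$ in a product of intervals, two boundary points coincide exactly when their Martin kernels $K_{\ST}(\cdot,\xi)$ agree on all of $\ST$. Hence $\Phi(y)=\Phi(y')$ if and only if $R^mH(x,y)=R^mH(x,y')$ for all $(x,m)\in\ST$.

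\emph{Well-definedness.} Suppose $y'=yr$ with $r\in R_\mu$. I would use the fact recalled above (from \cite[Proposition~2.4]{DDG+} in the symmetric case, and the factorization discussion in general) that $H(x,\cdot)$ is constant on cosets, i.e. $H(x,yr)=H(x,y)$ for all $x$. This gives $h_{yr}=h_y$ pointwise on $\ST$, so $\Phi(yr)=\Phi(y)$.

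If only the defining property $H(x,r)=H(x,e)$ for all $x$ is available, I would derive coset invariance as follows. By the group invariance $P^n(x,y)=P^n(z x, z y)$,
\[
\frac{P^n(x,yr)}{P^n(e,yr)}=\frac{P^n(y^{-1}x,r)}{P^n(y^{-1},r)} .
\]
Writing this as the quotient of $P^n(y^{-1}x,r)/P^n(e,r)$ by $P^n(y^{-1},r)/P^n(e,r)$ and letting $n\to\infty$ gives $H(y^{-1}x,r)/H(y^{-1},r)$. Since $r\in R_\mu$, this equals $H(y^{-1}x,e)/H(y^{-1},e)$, and running the same computation backwards identifies it with $H(x,y)$. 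This step needs $H(y^{-1},r)>0$, which holds by positivity of ratio limits for admissible walks, and it relies on the normalization $H(e,\cdot)=1$.

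\emph{Injectivity.} Suppose $\Phi(y)=\Phi(y')$. Evaluating the kernels at points $(x,m)\in\ST$ and dividing by $R^m>0$ gives $H(x,y)=H(x,y')$ for every $x$ that occurs in $\ST$. By the lazy standing assumption every $x\in\Gamma$ occurs: admissibility gives some $m$ with $(x,m)\in\ST$, and laziness keeps $(x,n)\in\ST$ for all $n\ge m$. So $H(\cdot,y)=H(\cdot,y')$ on all of $\Gamma$. Running the translation identity above with $y^{-1}y'$ in place of $r$, this forces $H(x,y^{-1}y')=H(x,e)$ for all $x$, i.e. $y^{-1}y'\in R_\mu$, so $yR_\mu=y'R_\mu$.

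The main obstacle is this last translation step: passing from equality of the kernels at $y$ and $y'$ to equality at $e$ and $y^{-1}y'$. The kernels $H(x,\cdot)$ are normalized at $e$, so left-translating $y\mapsto y^{-1}y$ changes the normalization. This has to be handled with the identity $H(x,yz)=H(y^{-1}x,z)/H(y^{-1},z)$, derived as above from group invariance of $P^n$ and SRLP, together with positivity of the denominators.
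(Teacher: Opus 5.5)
Your proof is correct and follows the paper's argument. Both evaluate the kernels $R^mH(x,y)$ on $\ST$, divide by $R^m$, and apply the cocycle identity $H(x,yz)=H(y^{-1}x,z)/H(y^{-1},z)$ in both directions. Your explicit check that the denominators are positive covers a point the paper leaves implicit, and admissibility alone already gives every $x\in\Gamma$ some $(x,m)\in\ST$, so laziness is not needed at that step.
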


\begin{proof}

    We will show that the map $\Phi$ satisfies that $\Phi(y_1) = \Phi(y_2)$ if and only if $y_1^{-1}y_2\in R_\mu$. Suppose that $\Phi(y_1) = \Phi(y_2)$. Then, for every $(x, m) \in \ST$, we have
    \begin{equation*}
        R^mH(x, y_1) = R^mH(x, y_2).
    \end{equation*}
    Since the random walk is admissible, for every $x\in\Gamma$, there exists $m$ such that $(x, m)\in \ST$. We deduce that for every $x\in \Gamma$, we have $H(x, y_1) = H(x, y_2)$. The co-cycle property for $H$ then yields that
    \begin{equation*}
        H(x, y_1^{-1}y_2) = \frac{H(y_1x, y_2)}{H(y_1, y_2)} = \frac{H(y_1x, y_1)}{H(y_1, y_1)} = H(x, e)
    \end{equation*}
    holds for every $x\in \Gamma$. Conversely, if $y_1^{-1}y_2\in R_\mu$, then by the same co-cycle property we get for every $x\in \Gamma$ that
    \begin{equation*}
        H(x, y_2) = H(x, y_1y_1^{-1}y_2) = \frac{H(y_1^{-1}x, y_1^{-1}y_2)}{H(y_1^{-1}, y_1^{-1}y_2)} = \frac{H(y_1^{-1}x, e)}{H(y_1^{-1}, e)} = H(x, y_1).
    \end{equation*}
    Therefore, for every $(x, m)\in \ST$, we get
    \begin{equation*}
        R^mH(x, y_1) = R^mH(x, y_2),
    \end{equation*}
    so that $\Phi(y_1) = \Phi(y_2)$, which concludes the proof.
\end{proof}

Our next goal is to show that $\Tilde{\Phi}$ is continuously extendable to the reduced ratio limit compatification $\Delta_{RL}^r\Gamma$. We recall that by \cite[Lemma 15(ii)]{WOWO21} there exists a continuous surjection 
\begin{equation*}
    \Psi\colon \Delta_{RL}\Gamma \twoheadrightarrow \Delta^r_{RL}\Gamma
\end{equation*}
from the full to the reduced ratio limit compactification which extends the canonical quotient map $\Gamma \rightarrow \Gamma / R_{\mu}$. Since $\Phi = \Tilde{\Phi} \circ \Psi$ on $\Gamma$, if we show that $\widetilde{\Phi}$ is continuously extendable to the reduced ratio limit compactification $\Delta_{RL}^r\Gamma$, it will automatically follow that $\Phi = \Tilde{\Phi} \circ \Psi$ is continuously extendable to $\Delta_{RL}\Gamma$. 

\begin{proposition}
There exists a continuous extension of $\Tilde{\Phi}$ to $\Delta^{r}_{RL}\Gamma$. Therefore, $\Phi$ continuously extends to $\Delta_{RL}\Gamma$ via the equation $\Phi = \Tilde{\Phi} \circ \Psi$.
\end{proposition}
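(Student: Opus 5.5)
The idea is to define the extension directly through the limit kernels. For $\xi \in \partial^r_{RL}\Gamma$ let $\Tilde{\Phi}(\xi)$ be the point of $\partial_{\ST}\Gamma$ whose Martin kernel is $h_\xi(x,m) = R^m H(x,\xi)$. This must be well defined as a point of the space-time boundary, and the resulting map must be continuous; continuity will be deduced from Proposition~\ref{prop:extension_of_identity}, suitably adapted, together with the pointwise description of convergence in Corollary~\ref{c:pointwiseconvergencespacetime}.

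\emph{Step 1: existence of $\Tilde{\Phi}(\xi)$.} Fix $\xi \in \partial^r_{RL}\Gamma$ and choose a sequence $y_jR_\mu \to \xi$ in $\Delta^r_{RL}\Gamma$, so that $H(x,y_j) \to H(x,\xi)$ for every $x$. Each $\Phi(y_j) \in \partial_{\ST}\Gamma$ has kernel $R^m H(x,y_j)$, and these kernels converge pointwise to $R^m H(x,\xi)$. Since $\partial_{\ST}\Gamma$ is compact (it is closed in the metrizable compactum $\Delta_{\ST}\Gamma$), a subsequence of $\Phi(y_j)$ converges to some $\eta \in \partial_{\ST}\Gamma$. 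By Corollary~\ref{c:pointwiseconvergencespacetime}, $K_{\ST}(\cdot,\eta) = R^m H(\cdot,\xi)$. The same corollary, via the injectivity in Proposition~\ref{prop:sub-Markov:Martin:convergence}, says that boundary points are determined by their kernels. Hence $\eta$ is independent of the subsequence and of the chosen $y_j$, and we set $\Tilde{\Phi}(\xi) := \eta$. If one prefers to avoid sequences in a possibly non-first-countable setting, the same argument runs with nets; both compactifications are metrizable anyway because $\Gamma$ is countable.

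\emph{Step 2: continuity.} Let $\zeta_j \to \zeta$ in $\Delta^r_{RL}\Gamma$. By the defining property of the reduced compactification, the continuous extensions satisfy $H(x,\zeta_j) \to H(x,\zeta)$ for every $x$. Consequently $K_{\ST}((x,m),\Tilde{\Phi}(\zeta_j)) = R^m H(x,\zeta_j) \to R^m H(x,\zeta) = K_{\ST}((x,m),\Tilde{\Phi}(\zeta))$ for all $(x,m) \in \ST$. Every value $\Tilde{\Phi}(\zeta_j)$ lies in $\partial_{\ST}\Gamma$, including the values at points of $\Gamma/R_\mu$. So Corollary~\ref{c:pointwiseconvergencespacetime} applies and gives $\Tilde{\Phi}(\zeta_j) \to \Tilde{\Phi}(\zeta)$.

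The only subtle point is whether the function $\zeta \mapsto H(x,\zeta)$ on $\Delta^r_{RL}\Gamma$ is continuous, including at the points of $\Gamma/R_\mu$. This holds by construction: $\Delta^r_{RL}\Gamma$ is realized as the closure of $\Gamma/R_\mu$ in a product of intervals, with coordinates $H(x,\cdot)$. Finally, $\Phi = \Tilde{\Phi}\circ\Psi$ holds on $\Gamma$. The right-hand side is continuous on $\Delta_{RL}\Gamma$, being a composition of continuous maps, so it is the continuous extension of $\Phi$.
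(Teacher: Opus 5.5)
Your proof is correct and follows the same route as the paper. You define $\Tilde{\Phi}(\xi)$ as the space-time boundary point whose kernel is $(x,m)\mapsto R^mH(x,\xi)$, get continuity from the pointwise characterization of convergence in Corollary~\ref{c:pointwiseconvergencespacetime}, and usefully make explicit (via compactness plus injectivity of the kernel map) why such a boundary point exists and is unique, which the paper only asserts; the announced appeal to Proposition~\ref{prop:extension_of_identity} is never actually needed.
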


\begin{proof}
    
    Let $\xi\in \partial^{r}_{RL}\Gamma$. As $\Gamma / R_{\mu}$ is dense in $\Delta^{r}_{RL}\Gamma$, there is a sequence $y_k\in\Gamma/R_{\mu}$ converging to $\xi$. This implies that for all $x\in \Gamma$,
    \begin{equation*}
        H(x, y_k)\underset{k\to \infty}{\longrightarrow}H(x, \xi).
    \end{equation*}
    Now, let $h_k$ be the limit function defined by $\Tilde{\Phi}(y_k)$. By the discussion above,
    \begin{equation*}
        h_k((x, m)) = R^{m}H(x, y_k)
    \end{equation*}
    for every $(x, m)\in \ST$. Therefore, for every such fixed $(x, m)$ we have
    \begin{equation*}
        h_k((x, m))\underset{k\to \infty}{\longrightarrow}R^mH(x, \xi).
    \end{equation*}
    In other words, the sequence of functions $h_k$ converges point-wise to the function $h_\xi$ given by $h_{\xi}(x,m) = R^m H(x,\xi)$. By definition of point-wise convergence of functions representing points in space-time Martin boundary, the limit $h_\xi$ must correspond to a limit point of the space-time Martin kernels. We denote by $\Tilde{\Phi}(\xi)$ the corresponding point in the space-time Martin boundary. Note that this is well-defined since for any choice of a sequence $y_k$ converging to $\xi$ in $\Delta^{r}_{RL}\Gamma$, the limit function $H(x, \xi)$ will be independent of $y_k$, so that the limit function $h_\xi$ is also independent of $y_k$. Thus, we are left to show that the extension (which we continue to denote by) $\Tilde{\Phi}$ is continuous. 
    
    Suppose $\xi_n\in \Delta^{r}_{RL}\Gamma$ converges to $\xi$. This implies that for all $x \in \Gamma$ that $H(x, \xi_n)$ converges to $H(x, \xi)$. Now, the limit of space-time Martin kernels defined by $\Tilde{\Phi}(\xi_n)$ is given by
    \begin{equation*}
        h_n\colon (x,m)\mapsto R^{m}H(x, \xi_n)
    \end{equation*}
    and the limit of space-time Martin kernels defined by $\Tilde{\Phi}(\xi)$ is given by
    \begin{equation*}
        h_{\xi}\colon (x, m)\mapsto R^{m}H(x, \xi).
    \end{equation*}
    Thus, it follows readily that $h_n$ converges point-wise to $h_{\xi}$, so that by definition of point-wise convergence of functions representing points in the space-time Martin boundary we get that $\Tilde{\Phi}(\xi_n)$ converges to $\Tilde{\Phi}(\xi)$.
\end{proof}

We arrive at the main result of this section, showing that the reduced ratio-limit compactification naturally embeds inside the space-time boundary.

\begin{theorem}
\label{t:ratiolimitspacetime}

    Let $\Gamma$ be a countable discrete group, and let $\mu$ be an admissible finitely supported lazy symmetric probability measure on $\Gamma$ such that $(\Gamma, \mu)$ has SRLP. Then, there exists a continuous map $\Phi\colon\Delta_{RL}\Gamma \to \partial_{\ST}\Gamma$ and a homeomorphism onto its image $\Tilde{\Phi}\colon\Delta^{r}_{\RL}\Gamma \to \partial_{\ST}\Gamma$ which make the following diagram commute
    \[
        \begin{tikzcd}
            \Delta_{RL}\Gamma \arrow[r, rightarrow, "\Phi"] \arrow[d, twoheadrightarrow, "\Psi"] & \partial_{\ST}\Gamma \\
            \Delta^{r}_{RL}\Gamma \arrow[ru, hookrightarrow, "\Tilde{\Phi}"{below}] & 
        \end{tikzcd}.
    \]
\end{theorem}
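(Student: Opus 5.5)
The plan is to assemble the theorem from the two preceding results and one short compactness argument. By the preceding Proposition, $\Tilde{\Phi}$ extends continuously to $\Delta^{r}_{RL}\Gamma$. We then set $\Phi := \Tilde{\Phi}\circ\Psi$, where $\Psi\colon\Delta_{RL}\Gamma\twoheadrightarrow\Delta^r_{RL}\Gamma$ is the continuous surjection from \cite[Lemma 15(ii)]{WOWO21}. This makes $\Phi$ continuous, and the diagram commutes by construction. On $\Gamma$ it agrees with the map $y\mapsto\Phi(y)$ defined earlier, since $\Psi$ extends the quotient map $\Gamma\to\Gamma/R_\mu$.

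What remains is to show that $\Tilde{\Phi}$ is a homeomorphism onto its image. Since $\Delta^r_{RL}\Gamma$ is compact and $\partial_{\ST}\Gamma$ is Hausdorff (indeed metrizable), a continuous injection is automatically a homeomorphism onto its image. So everything reduces to proving that the extended $\Tilde{\Phi}$ is injective on all of $\Delta^r_{RL}\Gamma$, not just on $\Gamma/R_\mu$, where the preceding Lemma already gives injectivity.

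For injectivity, recall the explicit description: for every $\xi\in\Delta^r_{RL}\Gamma$ (group point or boundary point), $\Tilde{\Phi}(\xi)$ is the space-time boundary point whose kernel is $h_\xi(x,m)=R^mH(x,\xi)$. Suppose $\Tilde{\Phi}(\xi_1)=\Tilde{\Phi}(\xi_2)$. Then $R^mH(x,\xi_1)=R^mH(x,\xi_2)$ for all $(x,m)\in\ST$. By admissibility, every $x\in\Gamma$ appears in some $(x,m)\in\ST$, so $H(x,\xi_1)=H(x,\xi_2)$ for all $x\in\Gamma$.

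It then suffices to know that points of $\Delta^r_{RL}\Gamma$ are separated by the functions $H(x,\cdot)$, $x\in\Gamma$. I will split into cases.

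\emph{Both points boundary points.} Separation holds by the definition of the reduced compactification as the smallest compactification making these functions continuous; equivalently, boundary points are identified with their limit kernels, as in Proposition \ref{prop:sub-Markov:Martin:convergence}.

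\emph{Both points in $\Gamma/R_\mu$.} This is exactly the preceding Lemma, via the cocycle identity.

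\emph{Mixed case}, $\xi_1=yR_\mu\in\Gamma/R_\mu$ and $\xi_2\in\partial^r_{RL}\Gamma$. This is the main obstacle, because the compactification is built with $\Gamma/R_\mu$ open, so a boundary point could a priori carry the same kernel as a group point. I would use the structure of $H$ directly. The cocycle identity gives $H(x,yz)=H(y^{-1}x,z)/H(y^{-1},z)$, and applying it with $z$ running along a sequence converging to $\xi_2$ transfers equality of kernels at $\xi_2$ and at $y$ to equality of kernels at a boundary point and at $e$. I would then appeal to the construction in \cite{WOWO21,DOAD21}, where by design the reduced compactification distinguishes boundary points from elements of $\Gamma/R_\mu$; the $\delta$-term convention in the metric realization makes $\Gamma/R_\mu$ open and discrete. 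If that convention only gives separation in the topology and not of the kernels, a fallback is needed. In that case I would verify separation using finite support and symmetry: show that $H(\cdot,yR_\mu)$ has a defect in $R$-harmonicity at $y$ (analogous to the argument in Proposition \ref{p:embeddingMartinintoboundedfunctions}), while boundary kernels are $R$-harmonic since $\mu$ is finitely supported.

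Once injectivity holds in all cases, compactness yields that $\Tilde{\Phi}$ is a homeomorphism onto its image, which completes the proof.
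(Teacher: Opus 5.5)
Your route is the paper's: extend $\Tilde{\Phi}$ by the preceding Proposition, set $\Phi=\Tilde{\Phi}\circ\Psi$, reduce injectivity to equality of kernels via $h_\xi(x,m)=R^mH(x,\xi)$ and admissibility, and conclude by compactness of $\Delta^r_{RL}\Gamma$ and Hausdorffness of $\partial_{\ST}\Gamma$. The paper handles injectivity in one line. It passes from $H(\cdot,\xi_1)=H(\cdot,\xi_2)$ directly to $\xi_1=\xi_2$ for arbitrary $\xi_1,\xi_2\in\Delta^r_{RL}\Gamma$. In other words, it reads the reduced compactification as one whose points, cosets in $\Gamma/R_\mu$ included, are determined by their ratio-limit kernels. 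Under that reading your case split is unnecessary and your proof coincides with the paper's.

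The trouble is the mixed case. You correctly single it out as the delicate point, but you do not settle it, and your fallback is false: $H(\cdot,yR_\mu)$ has no harmonicity defect at $y$. Because $\mu$ is finitely supported you may interchange the finite sum and the limit. By Gerl's result, $P^{n+1}(e,y)/P^{n}(e,y)\to R^{-1}$. So for every $x\in\Gamma$, including $x=y$,
\begin{equation*}
\sum_{z\in\Gamma}\mu(x^{-1}z)H(z,y)=\lim_{n\to\infty}\frac{P^{n+1}(x,y)}{P^{n}(e,y)}=\lim_{n\to\infty}\frac{P^{n+1}(x,y)}{P^{n+1}(e,y)}\cdot\frac{P^{n+1}(e,y)}{P^{n}(e,y)}=R^{-1}H(x,y).
\end{equation*}
Thus $H(\cdot,y)$ is $R^{-1}$-harmonic everywhere. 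The defect used in Proposition \ref{p:embeddingMartinintoboundedfunctions} comes from the $n=0$ term of the Green series, and that term disappears in the ratio limit.

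Boundary kernels $H(\cdot,\xi)$ satisfy the same identity, so harmonicity cannot separate a coset from a boundary point. As you observe yourself, the $\delta$-term in the metric only makes $\Gamma/R_\mu$ open; it does not separate kernels. Your cocycle reduction to $y=e$ is fine, but it only restates what is needed: that no sequence $z_k\to\infty$ in $\Gamma/R_\mu$ satisfies $H(\cdot,z_k)\to H(\cdot,e)$ pointwise. You give no argument for this.

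As written, then, the mixed case is a gap. To close it, either invoke explicitly the kernel-determined description of $\Delta^r_{RL}\Gamma$ that the paper uses implicitly, or supply a genuinely different separation argument.
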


\begin{proof}

    It remains to show that the continuous extension $\Tilde{\Phi}$ to $\Delta_{RL}^r\Gamma$ is still an embedding. Let $\xi_1, \xi_2 \in \Delta_{RL}^r \Gamma$, and assume that $\Tilde{\Phi}(\xi_1) = \Tilde{\Phi}(\xi_2)$. Then, letting $h_1$ and $h_2$ be the limits of the space-time Martin kernels corresponding to $\Tilde{\Phi}(\xi_1)$ and $\Tilde{\Phi}(\xi_2)$, we have that
    \begin{equation*}
        h_i((x, m)) = R^{m}H(x, \xi_i).
    \end{equation*}
    In particular, this implies that $H(x, \xi_1) = H(x, \xi_2)$ for all $x\in\Gamma$, so that $\xi_1 = \xi_2$. Hence, we see that $\Tilde{\Phi}$ is one-to-one. Finally, since the reduced ratio limit compactification $\Delta^{r}_{RL}\Gamma$ is compact and Hausdorff, the injective map $\Tilde{\Phi}$ is automatically a homeomorphism onto its image.
\end{proof}

\section{The 0-Martin boundary}
\label{s:zeroMartin}

Let $\Gamma$ be a discrete countable group, and let $\mu$ be an admissible lazy finitely supported probability measure on $\Gamma$. We define the following (possibly non-symmetric) norm $|\cdot|_\mu$ on $\Gamma$ by declaring that
\begin{equation*}
    |x|_\mu = \inf \{ n\geq 0 \,|\, P^{n}(e, x)>0 \}.
\end{equation*}
Since $P^{0}(e, x) = \delta_e(x)$, we have that $|x|_\mu = 0$ if and only if $x = e$.
Also, note that since
\begin{equation*}
    P^{|x|_\mu + |y|_\mu}(e, xy)\geq P^{|x|_\mu}(e, x)P^{|y|_\mu}(e, x^{-1}xy)>0,
\end{equation*}
we get $|xy|_\mu\leq |x|_\mu + |y|_\mu$.
In fact, $|\cdot|_\mu$ is the word distance for the (possibly non-symmetric) finite generating set given by the support of $\mu$. In the following, we call $|\cdot|_\mu$ the word distance induced by the support of $\mu$.

We introduce the truncated family of measures $\bar \mu_x$ on $\Gamma$ by declaring that for $x, y\in \Gamma$, that if $|y|_\mu\leq |x|_\mu$ then $\bar \mu_x(y) = 0$ and if $|y|_\mu\geq |x|_\mu + 1$ then $\bar \mu_x(y) = \mu(x^{-1}y)$.
If $\mu(x^{-1}y) > 0$, then we necessarily have $|y|_\mu\leq |x|_\mu + 1$, so that $\bar\mu_x(y) > 0$ if and only if $|y|_\mu = |x|_\mu + 1$ and $\mu(x^{-1}y) > 0$.

Note that the sub-Markov chain $Z = (Z_{n})_{n}$, whose distribution is given by $(\bar \mu_x)_x$, is the random walk $X$ killed when reaching a point that could have been visited earlier. Roughly speaking $Z$ follows the law of $\mu$ but is assigned to only move forward with respect to the word distance $|\cdot|_\mu$. To simplify notations, we will write $|\cdot|$ instead of $|\cdot|_\mu$ in this section, as there will be no other word distance involved.

\begin{lemma}
    The Green function $G_{\bar\mu}$ associated with $\bar\mu$ is given by
    \begin{equation*}
        G_{\bar\mu}(x, y) = P^{|y| - |x|}(x, y)
    \end{equation*}
    if $e,x,y$ are aligned in this order on a geodesic for the word distance $|\cdot|$, and by $G_{\bar\mu}(x, y) = 0$ otherwise.
\end{lemma}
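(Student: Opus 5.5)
We compute the Green function $G_{\bar\mu}(x,y)=\sum_{n\geq 0}\bar P^{n}(x,y)$ directly from the structure of $\bar\mu$, where $\bar P(u,v):=\bar\mu_u(v)$. The point is that $\bar P$ only allows steps that increase the word length by exactly one. So the whole Green series collapses to a single term, and that term equals the corresponding power of the original kernel $P$ restricted to geodesic paths.

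\emph{Step 1: a single term survives.} Recall that $\bar P(u,v)>0$ iff $|v|=|u|+1$ and $\mu(u^{-1}v)>0$, and then $\bar P(u,v)=\mu(u^{-1}v)=P(u,v)$. Hence $\bar P^{n}(x,y)=\sum \prod_{i}\bar P(u_{i-1},u_i)$ over paths $x=u_0,u_1,\dots,u_n=y$, and a path contributes only if $|u_i|=|x|+i$ for all $i$. Consequently $\bar P^{n}(x,y)=0$ unless $n=|y|-|x|$. Writing $k:=|y|-|x|$, we get $G_{\bar\mu}(x,y)=\bar P^{k}(x,y)$ when $k\geq 0$, and $G_{\bar\mu}(x,y)=0$ if $k<0$.

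\emph{Step 2: identify $\bar P^{k}(x,y)$ with $P^{k}(x,y)$ in the aligned case.} Suppose $e,x,y$ lie in this order on a geodesic, i.e.\ $|x|+|x^{-1}y|=|y|$, so $k=|x^{-1}y|$. Any path $x=u_0,\dots,u_k=y$ with all $P(u_{i-1},u_i)>0$ has at most $k$ steps, and each step increases $|\cdot|$ by at most one by subadditivity, since $|u_i|\leq |u_{i-1}|+|u_{i-1}^{-1}u_i|\leq |u_{i-1}|+1$. To go from length $|x|$ to length $|y|=|x|+k$ in $k$ steps, every step must increase the length by exactly one. Thus every $P$-path of length $k$ from $x$ to $y$ is automatically a $\bar P$-path with identical weights. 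Summing over paths gives $\bar P^{k}(x,y)=P^{k}(x,y)=P^{|y|-|x|}(x,y)$, which is positive because $k=|x^{-1}y|$ and $P^{|x^{-1}y|}(e,x^{-1}y)>0$.

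\emph{Step 3: the non-aligned case gives zero.} Suppose $e,x,y$ are not aligned but $k\geq 0$. Then $|x^{-1}y|>|y|-|x|=k$ by the triangle inequality and non-alignment, so $P^{k}(x,y)=P^{k}(e,x^{-1}y)=0$ by the definition of $|\cdot|$. Since $\bar P^{k}\leq P^{k}$ entrywise, we get $G_{\bar\mu}(x,y)=0$. If $k<0$, Step 1 already gives zero.

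The only delicate point is the bookkeeping of the convention for ``aligned on a geodesic''. We interpret it as $|x|+|x^{-1}y|=|y|$, which is the natural meaning for the possibly non-symmetric word length. With that fixed, the argument is a direct path-counting computation with no real obstacle.
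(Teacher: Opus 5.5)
Your proof is correct and follows essentially the same path-counting argument as the paper: only the term with $n=|y|-|x|$ survives, non-alignment forces $|y|-|x|<|x^{-1}y|$ so that this term vanishes, and in the aligned case it equals $P^{|y|-|x|}(x,y)$. Your Step 2 also makes explicit a point the paper passes over when it replaces $\bar\mu$ by $\mu$ in its final display: since each $\mu$-step raises $|\cdot|$ by at most one, every $\mu$-path of length $|y|-|x|$ from $x$ to $y$ is automatically a $\bar\mu$-path.
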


\begin{proof}

    By definition,
    \begin{equation*}
        G_{\bar\mu}(x, y) = \sum_{n\geq 0}\sum_{z_1, \dots, z_n}\bar\mu_x(z_1)\bar\mu_{z_1}(z_2)\dots\bar\mu_{z_n}(y).
    \end{equation*}
    Since $\bar\mu_u(v)$ is positive if and only if $|v| = |u| + 1$ and $\mu(u^{-1}v)>0$, we see by induction that the sum can only be positive if and only if there exists some $n\geq 0$ such that $|y| = |x|+n$, so that $n = |y| - |x|$ is uniquely determined. But now, if $e,x,y$ are not aligned in this order on a word geodesic, then $|x| + n = |y| = |xx^{-1}y| < |x| + |x^{-1}y|$ so that $n < |x^{-1}y| = \inf \{ \, m\geq 0 \, | \, P^m(x,y)>0 \, \}$. On the other hand, we clearly have $P^n(x,y)>0$, which yields a contradiction. Thus, $e,x,y$ are aligned in this order on a word geodesic if and only if $G_{\bar\mu}(x, y)>0$. Since
    \begin{equation*}
        \sum_{z_1, \dots, z_n}\bar\mu_x(z_1)\bar\mu_{z_1}(z_2)\dots\bar\mu_{z_n}(y)
    \end{equation*}
    can be positive if and only if $n = |y| - |x|$, we see that
    \begin{equation*}
        \begin{split}
            G_{\bar\mu}(x, y) & = \sum_{z_1, \dots, z_{|y| - |x|}}\bar\mu_{x}(z_1)\bar\mu_{z_1}(z_2)\dots\bar\mu_{z_{|y| - |x|}}(y) \\
            & = \sum_{z_1, \dots, z_{n}}\mu(x^{-1}z_1)\mu(z_1^{-1}z_2)\dots\mu(z_n^{-1}y) \\
            & = P^{|y| - |x|}(x, y),
        \end{split}
    \end{equation*}
    which concludes the proof.
\end{proof}

Consequently, the Martin kernels associated with $\bar\mu$ are given by
\begin{equation*}
    K_{\bar\mu}(x, y) = \frac{P^{|y| - |x|}(x, y)}{P^{|y|}(e, y)}
\end{equation*}
if $e, x, y$ are aligned in this order on a geodesic and by $K_{\bar\mu}(x, y) = 0$ otherwise. Our next result is the main reason for introducing the family of measures $\bar\mu_x$.
\begin{proposition}
\label{p:0-stabilityinthegroup}

    For every fixed $x, y\in \Gamma$, we have that
    \begin{equation*}
        \lambda^{|x|}K(x, y \,|\, \lambda)\to K_{\bar\mu}(x, y)
    \end{equation*} 
    as $\lambda\to 0$.
\end{proposition}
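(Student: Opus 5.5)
The proof is a direct power-series computation. By definition, with base point $e$,
\begin{equation*}
    \lambda^{|x|}K(x, y \,|\, \lambda) = \frac{\lambda^{|x|}\sum_{n\geq 0}\lambda^n P^n(x,y)}{\sum_{n\geq 0}\lambda^n P^n(e,y)},
\end{equation*}
and both series are convergent power series in $\lambda$ for $|\lambda| < R$, with nonnegative coefficients. The idea is to extract the lowest-order nonvanishing term of each series. The denominator's first nonzero coefficient is at $n = |y|$, namely $P^{|y|}(e,y) > 0$, by definition of $|y| = |y|_\mu$. The numerator's first nonzero coefficient is at $n = |x^{-1}y|$, namely $P^{|x^{-1}y|}(x,y) = \mu^{*|x^{-1}y|}(x^{-1}y) > 0$. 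Here we use that $P^n(x,y) = P^n(e,x^{-1}y)$, so the least $n$ with $P^n(x,y)>0$ is $|x^{-1}y|$.

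Factor out these lowest powers:
\begin{equation*}
    \lambda^{|x|}K(x, y \,|\, \lambda) = \lambda^{|x| + |x^{-1}y| - |y|}\cdot\frac{P^{|x^{-1}y|}(x,y) + O(\lambda)}{P^{|y|}(e,y) + O(\lambda)}.
\end{equation*}
The second factor tends to $P^{|x^{-1}y|}(x,y)/P^{|y|}(e,y)$ as $\lambda \to 0$, since both numerator and denominator are continuous (analytic) in $\lambda$ near $0$ and the denominator's limit is positive.

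By the triangle inequality $|y| \leq |x| + |x^{-1}y|$, the exponent $d := |x| + |x^{-1}y| - |y|$ is a nonnegative integer. We then split into two cases.

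\emph{Case $d > 0$.} The factor $\lambda^d \to 0$, so the whole expression tends to $0$. This case corresponds exactly to $e,x,y$ \emph{not} being aligned in this order on a geodesic, where $K_{\bar\mu}(x,y) = 0$ by the formula following the preceding Lemma.

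\emph{Case $d = 0$.} Here $e,x,y$ are aligned in this order on a geodesic, and $|x^{-1}y| = |y| - |x|$. The limit is then $P^{|y|-|x|}(x,y)/P^{|y|}(e,y) = K_{\bar\mu}(x,y)$, as required.

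The only point needing care is this identification: geodesic alignment of $e,x,y$ in this order for the word distance induced by $\supp\mu$ means precisely $|y| = |x| + |x^{-1}y|$. This matches the definition used in the preceding Lemma, whose proof employs the same equivalence.

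There is no real obstacle. It suffices to justify that the tails are $O(\lambda)$ uniformly near $0$. Since $\lambda^{-k}$ times the tail beyond index $k$ is again a power series with radius of convergence $R > 0$, it is bounded on, say, $|\lambda| \leq R/2$.
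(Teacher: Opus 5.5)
Your proof is correct and follows essentially the same route as the paper. Both extract the leading terms $\lambda^{|x^{-1}y|}P^{|x^{-1}y|}(x,y)$ and $\lambda^{|y|}P^{|y|}(e,y)$ of the two Green series and split according to whether $|y| = |x| + |x^{-1}y|$; your explicit $O(\lambda)$ tail bound simply makes rigorous the asymptotic equivalences that the paper states without justification.
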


\begin{proof}

    Fix $x, y\in \Gamma$, and denote $m := |x^{-1}y|$, so that $m$ is the minimal integer such that $P^{m}(x, y) > 0$, and $n := |y|$. Then, we have
    \begin{equation*}
        G(x, y \,|\, \lambda) = \sum_{k\geq 0}\lambda^kP^{k}(x, y) = \sum_{k\geq m}\lambda^kP^{k}(x, y) = \lambda^m\sum_{k\geq m}\lambda^{k - m}P^{k}(x, y).
    \end{equation*}
    Consequently, as $\lambda\to 0$, we get
    \begin{equation*}
        G(x, y \,|\, \lambda)\sim \lambda^mP^{m}(x, y)
    \end{equation*}
    and
    \begin{equation*}
        G(e, y \,|\, \lambda)\sim \lambda^nP^{n}(e, y).
    \end{equation*}
    That is, as $\lambda \to 0$, we have that
    \begin{equation*}
        K(x, y \,|\, \lambda)\sim \lambda^{m - n}\frac{P^{m}(x, y)}{P^{n}(e, y)}.
    \end{equation*}
    Now, assume that $e, x, y$ are not aligned in this order on a geodesic for the word distance $|\cdot|$, so that $m= |x^{-1}y| > |y| - |x| = n - |x|$. Therefore, since $|x| > n - m$, we see that $\lambda^{|x|}K(x, y \,|\, \lambda)$ converges to $0$ as $\lambda$ tends to $0$. On the other hand, if we assume that $e, x, y$ are aligned in this order on a geodesic, we get that $m = |x^{-1}y| = |y| - |x|= n - |x|$. Thus,
    \begin{equation*}
        \lambda^{|x|}K(x, y \,|\, \lambda)\sim \frac{P^{|y| - |x|}(x, y)}{P^{|y|}(e, y)} = K_{\bar\mu}(x, y)
    \end{equation*}
    as $\lambda$ tends to $0$, which concludes the proof.
\end{proof}

\begin{definition}

    A function $g\colon\Gamma\to \mathbb R$ is called \emph{$\infty$-harmonic} if it is harmonic for the family of finite measures $(\bar\mu_x)_{x\in\Gamma}$, i.e. for every $x\in \Gamma$,
    \begin{equation*}
        g(x) = \sum_{y\in \Gamma}\bar\mu_x(y)g(y).
    \end{equation*}
    It is said to be \emph{normalized} if $g(e) = 1$. A normalized $\infty$-harmonic function $g$ is called \emph{minimal} if for every other normalized $\infty$-harmonic function $h$ such that $h\leq Cg$ for some $C > 0$, we have in fact $h = Cg$.
\end{definition}

Henceforth, we denote $K(\cdot, \cdot \,|\, 0) := K_{\bar\mu}(\cdot, \cdot)$ the $0$-Martin kernel.
\begin{definition}
The $1$-Martin compactification of $\Gamma$ with respect to the $0$-Martin kernels is called the \emph{$0$-Martin compactification} and denoted by $\Delta_{M, 0}\Gamma$. The \emph{$0$\nobreakdash-Martin boundary} is then the set difference $\partial_{M, 0}\Gamma:= \Delta_{M, 0}\Gamma \setminus \Gamma$. We denote by $K(\cdot, \xi \,|\, 0)$ the function corresponding to the point $\xi\in \partial_{M, 0}\Gamma$, and by $\partial^{m}_{M, 0}\Gamma$ the corresponding minimal Martin boundary.
\end{definition}

Proposition~\ref{p:0-stabilityinthegroup} is interpreted as saying that at the level of the group, the $0$-Martin kernels arise as limits of the $\lambda$-Martin kernels with appropriate rescaling as $\lambda\to 0$. More precisely, as $\lambda$ tends to $0$, the relevant function to study is $\lambda^{|x|}K(x, y \,|\, \lambda)$ rather than $K(x, y \,|\, \lambda)$. We will thus use the following notations below. We will henceforth write $\tilde K(x, y \,|\, \lambda) := \lambda^{|x|}K(x, y \,|\, \lambda)$ if $\lambda > 0$ and $\tilde K(x, y \,|\, 0) := K(x, y \,|\, 0)$. Proposition~\ref{p:0-stabilityinthegroup} shows that for fixed $x,y \in \Gamma$, as $\lambda \rightarrow 0$ we get that $\tilde K(x,y \, | \, \lambda) \rightarrow \tilde K (x,y \, | \, 0)$. However, this does not guarantee that the $0$-Martin boundary arises as a limit set of $\lambda$-Martin boundaries as $\lambda\to 0$.

\section{0-Martin boundaries of hyperbolic groups}
\label{sec:hyperbolic}

At the extremity of $\lambda = 0$ in the interval $[0,\rho^{-1}]$, we will exhibit a new kind of behavior. In this section, we will assume that $\mu$ is symmetric in order to ensure that the word distance $|\cdot|_\mu$ introduced in Section~\ref{s:zeroMartin} actually defines a (symmetric) metric on $\Gamma$. However, we will not assume that $\mu$ is lazy since in this section we will only be concerned with the $0$-Martin boundary.

\begin{proposition}
\label{p:zeroMartintoGromov}

    Let $\Gamma$ be a hyperbolic group, and let $\mu$ be an admissible finitely supported symmetric probability measure on $\Gamma$. Then, there exists an equivariant surjective continuous map $\psi_\mu$ from the $0$-Martin compactification onto the Gromov compactification which extends the identity map on $\Gamma$.
\end{proposition}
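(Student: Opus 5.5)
The map will be defined through the kernel: a point $\xi\in\partial_{M,0}\Gamma$ should go to the Gromov boundary point determined by the geodesic rays encoded in the support of $K(\cdot,\xi\,|\,0)$. Since $\mu$ is symmetric, $|\cdot|=|\cdot|_\mu$ is the word metric for the finite symmetric generating set $\supp(\mu)$, so $\Gamma$ is hyperbolic for this metric and the Gromov compactification $\Delta\Gamma$ may be built from it.

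\emph{Key observation.} For $y\in\Gamma$, the 0-Martin kernel $K(x,y\,|\,0)$ is positive exactly when $x$ lies on some word geodesic from $e$ to $y$. Take a sequence $y_n\to\xi\in\partial_{M,0}\Gamma$. Then $K(x,\xi\,|\,0)>0$ if and only if $x$ lies on a geodesic $[e,y_n]$ for all large $n$; this uses that each $K(x,y_n\,|\,0)$ is either $0$ or bounded below by a positive number depending only on $x$ and $|y_n|-|x|$. I would therefore work directly with the kernels. The support
\[
S_\xi:=\{x:K(x,\xi\,|\,0)>0\}
\]
is nonempty in every sphere. Indeed, $K(\cdot,\xi\,|\,0)$ is $\infty$-harmonic by Proposition~\ref{p:all-bnry-harm}, since $\bar\mu$ has finite range. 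Harmonicity together with $K(e,\xi\,|\,0)=1$ gives, at each level $k$, some $x$ with $|x|=k+1$, $K(x,\xi\,|\,0)>0$ and $\bar\mu$-adjacent to an earlier point of $S_\xi$. So $S_\xi$ contains an infinite $\bar\mu$-path from $e$, which is a geodesic ray $\gamma_\xi$. Set $\psi_\mu(\xi)=[\gamma_\xi]\in\partial\Gamma$ and $\psi_\mu(y)=y$ on $\Gamma$.

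\emph{Well-definedness.} Every point of $S_\xi$ lies on a geodesic $[e,y_n]$ for large $n$. Two rays $\gamma,\gamma'$ in $S_\xi$ therefore have initial segments of length $k$ lying on geodesics $[e,y_n]$ and $[e,y_{n'}]$, with $n,n'$ large. Also $|y_n|\to\infty$, because $\Gamma$ is open and discrete in $\Delta_{M,0}\Gamma$ and $\xi$ is a boundary point. By thinness of triangles, the Gromov product $(y_n|y_{n'})_e\to\infty$. If it did not, the prefixes of $[e,y_n]$ and $[e,y_{n'}]$ would diverge after a bounded time, which is impossible when both contain points of $S_\xi$ at every level and those points must lie on $[e,y_m]$ for a common large $m$. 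Hence $y_n$ converges in $\Delta\Gamma$ to a single point $\eta$, and every ray in $S_\xi$ converges to $\eta$. The cleanest formulation is that $\psi_\mu(\xi)$ is the Gromov limit of any sequence $y_n\to\xi$, and that this limit is independent of the sequence because it equals the endpoint of $\gamma_\xi$.

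\emph{Continuity.} By Proposition~\ref{prop:extension_of_identity}, since $\Gamma$ is countable, it suffices to check sequences (or nets) in $\Gamma$. If $y_n\to\xi$ in $\Delta_{M,0}\Gamma$, the previous step gives $y_n\to\psi_\mu(\xi)$ in $\Delta\Gamma$, as needed. The remaining properties are routine:
\begin{itemize}
\item \emph{Surjectivity.} Given a Gromov point $\eta$, pick a geodesic ray converging to it. Its vertices $y_n$ have a convergent subsequence in the compact space $\Delta_{M,0}\Gamma$, with limit $\xi$, and then $\psi_\mu(\xi)=\eta$.
\item \emph{Equivariance.} The left action of $\Gamma$ on $\Delta_{M,0}\Gamma$ is defined via the cocycle $K(gx,gy\,|\,0)/K(g,gy\,|\,0)$. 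One checks that it maps convergent sequences $y_n$ to $gy_n$, and left multiplication is continuous on $\Delta\Gamma$.
\end{itemize}

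\emph{Main obstacle.} The hard step is proving that the Gromov limit of $y_n$ exists and is independent of the chosen sequence; this is where hyperbolicity enters. I would first show that if $y_n\to\xi$, then for every $k$ the $k$-prefixes of geodesics $[e,y_n]$ eventually pass through points of $S_\xi\cap\{|x|=k\}$. I would then invoke $\delta$-thinness: any two geodesics from $e$ through a common point $x$ with $|x|=k$ fellow travel up to time $k$ within $2\delta$, so $(y_n|y_m)_e\ge k-O(\delta)$. A subtle point is that $S_\xi$ may contain several points at level $k$, which a priori could spread apart. To rule this out, I would take two points $x,x'\in S_\xi$ at level $k$. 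Both lie on geodesics to a common $y_m$ with $m$ large, so $d(x,x')\le 2\delta$.
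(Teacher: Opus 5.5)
Your construction of $\psi_\mu$, well-definedness, continuity and surjectivity follow the paper's proof. $\infty$-harmonicity of $K(\cdot,\xi\,|\,0)$ (Proposition~\ref{p:all-bnry-harm}) gives a support point on every sphere, and $K(x,y_n\,|\,0)\to K(x,\xi\,|\,0)>0$ puts such an $x$ on a geodesic from $e$ to $y_n$ for all large $n$. Hence $(y_n)$ converges in $\Delta\Gamma$ to a limit independent of the sequence. Continuity then comes from Proposition~\ref{prop:extension_of_identity} (sequences suffice because $\Delta_{M,0}\Gamma$ is metrizable, not because $\Gamma$ is countable), and surjectivity from compactness. Two parts of your write-up are wrong, however.

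\emph{Equivariance.} Your cocycle construction needs $G_{\bar\mu}(gx,gy)=G_{\bar\mu}(x,y)$. This fails because $\bar\mu_x$ is defined through $|\cdot|$ measured from $e$, so the $0$-kernel is not translation invariant. In fact left translations need not extend continuously to $\Delta_{M,0}\Gamma$ at all. Take Example~\ref{ex:free:0-martin} with $c=(1-\alpha)/6$. The sequence $(b^n,0)$ has a unique geodesic from $(e,0)$. The sequence $(b^n,1)$ has length $n+2$ and exactly $4(n+1)$ geodesics: insert $a^{\epsilon}a^{-\epsilon}$ into $b^n$, with exactly one of the two $a$-letters taken from $\{(a,1),(a^{-1},1)\}$. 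For these two sequences the $0$-kernel at $(b^k,0)$ equals $c^{-k}$ and $\frac{n-k+1}{n+1}c^{-k}$ respectively, and all other values tend to $0$. So both converge to the same point of $\partial_{M,0}\Gamma$. Their left translates by $(a,0)$ are the sequences $y_n^{(0)}$ and $y_n^{(1)}$ of Proposition~\ref{STcounterexample}, which converge to the distinct points $h_0\neq h_1$. Hence no action of $\Gamma$ on $\Delta_{M,0}\Gamma$ by homeomorphisms extends left multiplication, and the check you defer cannot be carried out. The paper's proof only asserts that $\psi_\mu$ is equivariant ``as it is so on $\Gamma$''; the example shows that nothing stronger is available this way, so drop the cocycle.

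\emph{The key observation.} Only the forward direction holds. Your lower bound on $K(x,y_n\,|\,0)$ depends on $|y_n|-|x|\to\infty$, so it is not uniform, and the converse is false. With $y_n=(b^n,1)$ as above, $x=(b^k,1)$ (of length $k+2$) lies on a geodesic from $(e,0)$ to $y_n$ for all $n\geq k$, yet $K(x,y_n\,|\,0)=c^{-k-2}/(4(n+1))\to 0$. Likewise, the claim that prefixes of geodesics $[e,y_n]$ eventually pass through $S_\xi$ fails for arbitrary geodesics: the one beginning $(a,1),(a^{-1},0)$ never meets $S_\xi=\{(b^k,0)\}$ after time $0$. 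Nothing is lost, because only the forward direction is needed, and it also disposes of your ``spreading'' worry. If one fixed $x\in S_\xi$ with $|x|=k$ lies on geodesics from $e$ to $y_n$ and to $y_m$, the triangle inequality alone gives $(y_n|y_m)_e\geq k$, with no thin-triangle constant. The same $x$ also handles two different sequences converging to $\xi$, which is exactly how the paper concludes.
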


\begin{proof}

    Let $h$ be a non-negative $\infty$-harmonic function with $h(e) = 1$. Then, $h(x)$ is positive for an infinite number of $x\in \Gamma$. Indeed, we prove by induction that for every $n \geq 0$, there exists $x\in\Gamma$ with $|x| = n$ such that $h(x) > 0$.
        
    When $n = 0$, we take $x = e$ since $h(e) = 1$. Now, assume that the property holds for some $n\in \mathbb{N}$. We take $x\in \Gamma$ such that $|x| = n$ with $h(x) > 0$. Then, since $h$ is $\infty$-harmonic, we have
    \begin{equation*}
        h(x) = \sum_{y\in \Gamma}\overline{\mu}_x(y)h(y) > 0,
    \end{equation*}
    so there exists $y\in \Gamma$ with $\overline{\mu}_x(y)h(y) > 0$. The fact that both $\overline{\mu}_x(y) > 0$ and $h(y) > 0$ implies that $|y| = n + 1$ and $h(y) > 0$, which proves the claim.

    Now, we define an extension $\psi_{\mu}$ of the identity map on $\Gamma$ to points $\xi\in \partial_{M,0}\Gamma$. Let $(y_k)_{k}$ be a sequence in $\Gamma$ converging to $\xi\in \partial_{M,0}\Gamma$. Denote by $h_\xi(\cdot) = K(\cdot, \xi \, | \, 0)$ the corresponding $0$-Martin function. Then, since $h_{\xi}$ is $\infty$-harmonic, we have that $h_\xi(x) > 0$ for infinitely many $x\in\Gamma$. Let $x$ be such an element. Since by Proposition \ref{p:0-stabilityinthegroup}, we have that $K(x, y_k \, | \, 0) \rightarrow h_{\xi}(x) > 0$ as $k\rightarrow \infty$, we see that there exists $k_0$ such that for all $k\geq k_0$, we have $P^{|y_k| - |x|}(x, y_k) > 0$. Thus, for $k \geq k_0$, we have that $x$ is on a geodesic from $e$ to $y_k$ for the word metric. This proves that geodesics from $e$ to $y_k$ fellow travel up to $x$, and since $|x|$ is arbitrarily large, we see that $y_k$ converges to a point in the Gromov boundary, which we denote by $\psi_\mu(\xi)$.
    
    We now prove that $\psi_\mu$ is well-defined. Let us assume that $(y_k^{(1)})_{k}$ and $(y_k^{(2)})_{k}$ are two sequences converging to the same point $\xi$ in the $0$-Martin boundary. Then, for every $x\in\Gamma$ for which $h_{\xi}(x) > 0$, there exists some $k_0$ such that for all $k\geq k_0$, by the same argument as above, geodesics from $e$ to $y_k^{(i)}$ fellow travel up to $x$ for all $k\geq k_0$, and therefore the limits of $(y_k^{(1)})_{k}$ and $(y_k^{(2)})_{j}$ define the same point in the Gromov boundary. We have shown that for any sequence $(y_k)_{k}$ which converges to a point $\xi\in \partial_{M, 0}\Gamma$, the very same sequence $(y_k)_{k}$ converges to a unique point $\psi_{\mu}(\xi)$. Thus, by Proposition \ref{prop:extension_of_identity} we deduce that $\psi_{\mu}$ is continuous.
    
    Since $\psi_{\mu}$ is clearly equivariant (as it is so on $\Gamma$), we are left with showing that $\psi_{\mu}$ is surjective. Let $\zeta$ be a point in the Gromov boundary and let $(y_k)_{k}$ be a sequence in $\Gamma$ converging to $\zeta$ in the Gromov boundary. Then, up to a sub-sequence, $(y_k)_{k}$ converges to a point $\xi$ in the $0$-Martin boundary, and so, necessarily, $\psi_\mu(\xi) = \zeta$.
\end{proof}

In Example \ref{ex:free:0-martin}, we will see that the map $\psi_{\mu}$ above can fail to be injective in general. For the purpose of showing this, we develop a bit of theory.

Since we assume that $\mu$ is finitely supported, the $0$-Martin kernel $x \mapsto K(x , \xi \, | \, 0)$ is $\infty$-harmonic for every $\xi \in \partial_{M,0}\Gamma$ by Proposition \ref{p:all-bnry-harm}. Thus, by Theorem \ref{thm:Martin-Poisson:sub-Markov}, there exists a unique probability measure $\nu_\xi$ on the minimal $0$-Martin boundary such that
\begin{equation*}
    K(x, \xi \,|\, 0) = \int_{\partial_{M,0}^m\Gamma}K(x, \zeta |0)d\nu_\xi(\zeta)
\end{equation*}
for each $x\in\Gamma$.

It turns out that the support of the measure is the preimage of the corresponding point in Gromov boundary, as the following proposition demonstrates.

\begin{proposition}
For every $\xi\in \partial_{M,0}\Gamma$, the measure $\nu_\xi$ is supported on $\psi_\mu^{-1}(\psi_\mu(\xi))$.
\end{proposition}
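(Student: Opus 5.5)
Write $\eta := \psi_\mu(\xi)$ and $h_\xi := K(\cdot,\xi\,|\,0)$. The plan is to show that every minimal point $\zeta$ in the support of $\nu_\xi$ satisfies $\psi_\mu(\zeta)=\eta$. The tool is the positivity set of $\infty$-harmonic functions, as in the proof of Proposition~\ref{p:zeroMartintoGromov}. Let $S_\xi := \{x\in\Gamma : h_\xi(x)>0\}$.

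\emph{Step 1: $S_\xi$ only meets geodesics converging to $\eta$.} From the proof of Proposition~\ref{p:zeroMartintoGromov}, every $x\in S_\xi$ lies on a geodesic from $e$ to $y_k$ for all large $k$, where $y_k\to\xi$. Consequently, any sequence $x_j\in S_\xi$ with $|x_j|\to\infty$ converges to $\eta$ in the Gromov compactification. Indeed, the Gromov product satisfies $(x_j\cdot y_k)_e \ge |x_j|-C\delta$ for $k$ large, and $y_k\to\eta$.

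\emph{Step 2: the representing measure lives on points whose kernels vanish off $S_\xi$.} For every $x$ we have the integral representation $h_\xi(x)=\int K(x,\zeta\,|\,0)\,d\nu_\xi(\zeta)$, and the integrand is nonnegative. So if $x\notin S_\xi$, then $K(x,\zeta\,|\,0)=0$ for $\nu_\xi$-a.e.\ $\zeta$. Since $\Gamma$ is countable, there is a $\nu_\xi$-conull set $A$ such that for all $\zeta\in A$ we have $\{x : K(x,\zeta\,|\,0)>0\}\subseteq S_\xi$.

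\emph{Step 3: identify $\psi_\mu(\zeta)$ for $\zeta\in A$.} Take $\zeta\in A$ and $y'_k\to\zeta$. The argument of Proposition~\ref{p:zeroMartintoGromov} applied to $h_\zeta$ produces elements $x_n$ with $|x_n|=n$ and $h_\zeta(x_n)>0$, each lying on geodesics from $e$ to $y'_k$ for large $k$. Hence $x_n\to\psi_\mu(\zeta)$ in the Gromov sense. But $x_n\in S_\xi$, so Step~1 gives $x_n\to\eta$. Therefore $\psi_\mu(\zeta)=\eta$ for all $\zeta\in A$. It follows that $\nu_\xi$ gives full measure to $\psi_\mu^{-1}(\eta)$. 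This set is closed because $\psi_\mu$ is continuous, so it also contains the support of $\nu_\xi$.

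The main obstacle is measurability and the ``a.e.'' bookkeeping in Step~2. One has to check that $\zeta\mapsto K(x,\zeta\,|\,0)$ is continuous, hence Borel, on the minimal boundary; this follows from Proposition~\ref{prop:sub-Markov:Martin:convergence}. The hyperbolic geometry in Step~1 is routine: points on a geodesic $[e,y_k]$ at distance $n$ from $e$ have Gromov product with $y_k$ equal to $n$.
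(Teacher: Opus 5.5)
Your proof is correct, and it takes a genuinely different, more economical route than the paper's.

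\textbf{The paper's argument} is quantitative. It decomposes $P^{|y_m|}(e,y_m)$ over the sphere $S_n$ and passes to the limit to get
\[
1=\sum_{x\in S_n\cap\mathcal P(\xi)}P^n(e,x)K(x,\xi\,|\,0).
\]
Combining this with $P^n(e,x)\le R^{-n}$ and the Morse lemma, it obtains the lower bound $\sum_{x\in S_n\cap\mathcal P(\tau)}K(x,\tau\,|\,0)\ge R^n/K$, uniformly in $\tau$. Then, for $\eta$ with $\psi_\mu(\eta)\neq\psi_\mu(\xi)$, it chooses a neighborhood $U$ of $\psi_\mu(\eta)$ whose geodesics separate from those to $\psi_\mu(\xi)$ beyond some radius $N$. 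Integrating the vanishing sum $\sum_{x\in S_n\cap\mathcal P(U)}K(x,\xi\,|\,0)=0$ against $\nu_\xi$ then forces $\nu_\xi(\psi_\mu^{-1}(U))=0$.

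\textbf{Your argument} replaces all of this with a qualitative observation. Nonnegativity of the integrand and countability of $\Gamma$ show that $\nu_\xi$-a.e.\ $\zeta$ has positivity set $\mathcal P(\zeta)$ (your $S_\zeta$) contained in $\mathcal P(\xi)$. Positivity sets are unbounded, by the induction in Proposition~\ref{p:zeroMartintoGromov}. That induction applies because $K(\cdot,\zeta\,|\,0)$ is $\infty$-harmonic by Proposition~\ref{p:all-bnry-harm}. Finally, any unbounded sequence inside $\mathcal P(\zeta)$ converges to $\psi_\mu(\zeta)$ in the Gromov compactification, so the positivity set determines the Gromov image.

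\textbf{Comparison.} Your route needs neither the Morse constant, nor the bound $P^n(e,x)\le R^{-n}$, nor a separating neighborhood $U$. The hyperbolic geometry enters only through the same Gromov-product fact that is used to define $\psi_\mu$. You also get a finer conclusion: $\nu_\xi$ is concentrated on $\{\zeta : \mathcal P(\zeta)\subseteq\mathcal P(\xi)\}$. In exchange, the paper's route yields an explicit lower bound on the kernel mass over spheres, uniform over the boundary, which could be reused for more quantitative statements. Your final step, upgrading ``full measure'' to ``contains the support,'' correctly relies on the fibre $\psi_\mu^{-1}(\eta)$ being closed, which follows from the continuity of $\psi_\mu$ established in Proposition~\ref{p:zeroMartintoGromov}.
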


\begin{proof}
Notice that for any $\xi$ in the 0-Martin boundary, $x\mapsto K(x, \xi \,|\, 0)$ can only be positive on the union of geodesics from $e$ to $\psi_\mu(\xi)$. We denote by $\mathcal{P}(\xi)$ the set of $x\in \Gamma$ for which $K(x, \xi \,|\, 0)>0$. By the Morse property (see for example \cite[Theorem III.H.1.7]{Bridson-Haefliger99}), there exists a constant $K$ such that for any $\zeta$ in the Gromov boundary and for any $n$, the intersection of the sphere $S_n$ of radius $n$ in $\Gamma$ with the set of all geodesics from $e$ to $\zeta$ contains at most $K$ elements. Assume that $y_m$ in $\Gamma$ converges to $\xi$ in the 0-Martin boundary. Then, since $|y_m|$ is the first time in which we visit $y_m$, for every $n$ and sufficiently large $m$ we have that,
    \begin{equation*}
        P^{|y_m|}(e, y_m) = \sum_{x\in S_n\cap \mathcal{P}(\xi)}P^n(e, x)P^{|y_m| - n}(x, y_m).
    \end{equation*}
    Hence, after dividing by $P^{|y_m|}(e, y_m)$ and letting $m$ tend to infinity we get that,
    \begin{equation*}
        1 = \sum_{x\in S_n\cap \mathcal{P}(\xi)}P^n(e, x)K(x, \xi \,|\, 0).
    \end{equation*}
    Now, since $P^n(e,x)\leq R^{-n}$ for every $x \in \Gamma$, we get that there exists at least one point $x \in S_n\cap \mathcal{P}(\xi)$ such that $K(x, \xi \,|\, 0)\geq R^n/K$. In particular, for $\xi\in \partial_{M, 0}\Gamma$,
    \begin{equation*}
        \sum_{x\in S_n\cap \mathcal{P}(\xi)}K(x, \xi \,|\, 0)\geq \frac{R^n}{K}.
    \end{equation*}

    Now, we fix $\xi$ and we let $\eta$ be such that $\psi_\mu(\xi)\neq \psi_\mu(\eta)$. There exists an open neighborhood $U$ of $\psi_\mu(\eta)$ with $\psi_\mu(\xi) \notin U$ and $N \in \mathbb{N}$ so that for any point $\zeta\in U$ in the Gromov boundary, any geodesic from $e$ to $\zeta$ is disjoint from any geodesic from $e$ to $\psi_\mu(\xi)$ outside of the ball $B_{N}$ around $e$. We let $\mathcal{P}(U)$ be the set of $x \in \Gamma$ such that $K(x, \tau \, | \, 0) > 0$ for some $\tau \in \psi_\mu^{-1}(U)$. By what precedes, $\mathcal{P}(U)$ contains $\mathcal{P}(\tau)$ for every $\tau \in \psi_\mu^{-1}(U)$ and for $n>N$, we have that $\mathcal{P}(U)\cap S_n$ and $\mathcal{P}(\xi)\cap S_n$ are disjoint. In particular, we find that for $n> N$
    \begin{equation*}
        \sum_{x\in S_n\cap \mathcal{P}(U)}K(x, \xi \,|\,0) = 0,
    \end{equation*}
    while for any $\tau\in \psi_\mu^{-1}(U)$,
    \begin{equation*}
        \sum_{x\in S_n\cap \mathcal{P}(U)}K(x, \tau \,|\, 0)\geq \frac{R^n}{K}.
    \end{equation*}
    Consequently,
    \begin{align*}
        0 = & \sum_{x\in S_n\cap \mathcal{P}(U)}K(x, \xi \,|\,0) =\ \sum_{x\in S_n\cap \mathcal{P}(U)}\int_{\partial_{M,0}\Gamma} K(x, \tau \,|\, 0)d\nu_{\xi}(\tau) \\
        & \ \geq \sum_{x\in S_n\cap \mathcal{P}(U)}\int_{\psi_\mu^{-1}(U)} K(x, \tau \,|\, 0)d\nu_{\xi}(\tau) \geq \frac{R^n}{K} \cdot \nu_{\xi}(\psi_\mu^{-1}(U)).
    \end{align*}
    We deduce that $\nu_{\xi}(\psi_\mu^{-1}(U)) = 0$. Since $\psi_\mu^{-1}(U)$ is open and contains $\eta$, this proves that $\eta$ is not in the support of $\nu_{\xi}$.
\end{proof}

\begin{corollary}

    There is at least one minimal point in the preimage $\psi_\mu^{-1}(\zeta)$ for every $\zeta$ in the Gromov boundary.
\end{corollary}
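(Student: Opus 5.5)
Given $\zeta\in\partial\Gamma$, I would first produce a point of the $0$-Martin boundary lying over $\zeta$, and then extract a minimal point from its representing measure.

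For the first step I use the surjectivity of $\psi_\mu$ from Proposition~\ref{p:zeroMartintoGromov}. Take a sequence $(y_k)_k$ in $\Gamma$ converging to $\zeta$ in the Gromov compactification. By compactness of $\Delta_{M,0}\Gamma$, after passing to a subsequence it converges to some $\xi\in\Delta_{M,0}\Gamma$. Because $|y_k|\to\infty$ and $\Gamma$ is open and discrete in $\Delta_{M,0}\Gamma$, the limit $\xi$ lies in $\partial_{M,0}\Gamma$. Continuity of $\psi_\mu$, together with the fact that it extends the identity on $\Gamma$, then gives $\psi_\mu(\xi)=\zeta$.

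For the second step I use that $\mu$ is finitely supported. By Proposition~\ref{p:all-bnry-harm}, $K(\cdot,\xi\,|\,0)$ is a nonnegative $\infty$-harmonic function with value $1$ at $e$. Theorem~\ref{thm:Martin-Poisson:sub-Markov} then provides a probability measure $\nu_\xi$ on $\partial^m_{M,0}\Gamma$ representing it. By the preceding proposition, $\nu_\xi$ is supported on $\psi_\mu^{-1}(\zeta)$. Since $\nu_\xi$ is a probability measure, its support is nonempty, and therefore $\psi_\mu^{-1}(\zeta)$ contains a point of $\partial^m_{M,0}\Gamma$.

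The main obstacle is the measure-theoretic meaning of ``supported on''. I expect the preceding proof to give exactly that $\nu_\xi(\psi_\mu^{-1}(U))=0$ for a neighbourhood $U$ of each $\psi_\mu(\eta)\neq\zeta$. To conclude I would check that $\nu_\xi(\partial^m_{M,0}\Gamma\setminus\psi_\mu^{-1}(\zeta))=0$. The set $\partial\Gamma\setminus\{\zeta\}$ is second countable, since $\partial\Gamma$ is metrizable for a hyperbolic group, so it is covered by countably many such neighbourhoods $U$. Their preimages under $\psi_\mu$ are open and cover the complement of $\psi_\mu^{-1}(\zeta)$, and each has $\nu_\xi$-measure zero. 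Countable subadditivity then shows the complement is null. Hence $\nu_\xi$ gives full mass to $\psi_\mu^{-1}(\zeta)\cap\partial^m_{M,0}\Gamma$, which is therefore nonempty. No measurability issue arises, because the minimal boundary is a Borel ($G_\delta$) subset of the metrizable compactum $\Delta_{M,0}\Gamma$, which is where Theorem~\ref{thm:Martin-Poisson:sub-Markov} places the measure.
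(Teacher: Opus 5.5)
Your proposal is correct and follows exactly the argument the paper leaves implicit. You use surjectivity of $\psi_\mu$ to pick $\xi\in\partial_{M,0}\Gamma$ over $\zeta$, and then the preceding proposition to see that the representing probability measure $\nu_\xi$ on $\partial^m_{M,0}\Gamma$ is concentrated on $\psi_\mu^{-1}(\zeta)$. Your second-countability (Lindel\"of) step, which upgrades ``no $\eta$ off the fiber lies in the support'' to ``$\nu_\xi$ gives full mass to the fiber,'' is a valid detail that the paper does not spell out.
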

For $\lambda > 0$, the $\lambda$-Martin boundary is equivariantly homeomorphic to Gromov boundary and consists of \emph{minimal} $\lambda$-harmonic functions \cite{Ancona, Lalley-Gouezel13, GouezelLLT}. The next example shows that the same does not generally hold for the $0$-Martin boundary.

\begin{example}
\label{ex:free:0-martin}

    Let $\F_{2}$ be the free group with two generators denoted by $a$ and $b$, and let $\Z/2\Z = \{0,1\}$ be the finite group of two elements. We consider a finitely generated hyperbolic group $\Gamma = \mathbb F_{2} \times \Z/2\Z$ together with the lazy simple random walk with a measure whose support is
    \begin{equation*}
        S = \{(a, 0), (b, 0), (a^{-1}, 0), (b^{-1}, 0), (a, 1), (a^{-1}, 1)\}.
    \end{equation*}
    In other words, we endow $\Gamma$ with an admissible finitely supported lazy symmetric probability measure $\mu$ by setting
    \begin{equation*}
        \mu = \alpha \delta_{(e, 0)} + \frac{1 - \alpha}{6} \big(\delta_{(a, 0)} + \delta_{(a^{-1}, 0)} + \delta_{(b, 0)} + \delta_{(b^{-1}, 0)} + \delta_{(a, 1)} + \delta_{(a^{-1}, 1)}\big).
    \end{equation*}
    for some $0 < \alpha < 1$. We denote by $|\cdot|$ the word distance induced by $\mu$, which is just the word distance associated with the generating set $S$.
\end{example}

\begin{proposition}
\label{STcounterexample}

    In Example \ref{ex:free:0-martin}, the map $\psi_\mu$ from $0$-Martin boundary onto Gromov boundary is not injective. Moreover, the $0$-Martin boundary contains non-minimal $\infty$-harmonic functions.
\end{proposition}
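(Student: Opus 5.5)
The plan is to compute the $0$-Martin kernels explicitly along a single ray of the Gromov boundary, namely the ray $ab^{\infty}$. On this ray the $\Z/2\Z$-coordinate is \emph{not} forgotten in the limit when the tail of the word contains no letter $a$, and it is averaged out when the tail does contain such a letter. Throughout, write $c := (1-\alpha)/6$, and for a reduced word $w\in\F_2$ let $k(w)$ denote the number of letters $a^{\pm1}$ in $w$.

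\textbf{Step 1: geodesics and first-passage probabilities.} Every letter $a^{\pm1}$ can be realized by either $(a^{\pm1},0)$ or $(a^{\pm1},1)$, while letters $b^{\pm 1}$ are realized only with flag $0$. Hence, for $k(w)\geq 1$ and $\varepsilon\in\{0,1\}$, we get $|(w,\varepsilon)| = |w|_{\F_2}$. The geodesics from $(e,0)$ to $(w,\varepsilon)$ are exactly the reduced spelling of $w$ together with a choice of flags on its $a$-letters whose sum is $\varepsilon$ mod $2$. Since the walk is not allowed to stay put along a geodesic, this gives $P^{|y|}(e,y) = c^{|w|}2^{k(w)-1}$ for $y=(w,\varepsilon)$. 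For $x=(u,\delta)$ with $u$ a prefix of $w$, I will count the geodesics through $x$ in the same way and obtain
\begin{equation*}
K(x,y\,|\,0)=
\begin{cases}
c^{-|u|}2^{-k(u)} & \text{if } u^{-1}w \text{ contains an } a\text{-letter},\\
c^{-|u|}2^{1-k(w)}\,[\delta=\varepsilon] & \text{if } u^{-1}w\in\langle b\rangle .
\end{cases}
\end{equation*}
(If $k(u)=0$, only $\delta=0$ keeps $x$ on a geodesic.) Moreover, $K(x,y\,|\,0)=0$ when $u$ is not a prefix of $w$.

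\textbf{Step 2: non-injectivity.} Consider the sequences $y_n^{\varepsilon}=(ab^n,\varepsilon)$ for $\varepsilon=0,1$. Both converge in $\Delta\Gamma$ to the Gromov point $ab^\infty$. By Step 1, both converge in $\Delta_{M,0}\Gamma$, with limit functions
\begin{equation*}
h_\varepsilon(ab^j,\delta)=c^{-(j+1)}[\delta=\varepsilon], \qquad h_\varepsilon(e,0)=1,
\end{equation*}
and $h_\varepsilon=0$ elsewhere. Since $h_0\neq h_1$, these limits are two distinct points $\xi_0,\xi_1$ of $\partial_{M,0}\Gamma$. The construction of $\psi_\mu$ in Proposition \ref{p:zeroMartintoGromov} sends each sequence to the Gromov limit of that same sequence, so $\psi_\mu(\xi_0)=\psi_\mu(\xi_1)$, and $\psi_\mu$ is not injective.

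\textbf{Step 3: a non-minimal boundary kernel.} Consider now $z_n=(ab^na,0)$. For a fixed prefix $u=ab^j$ and $n>j$, the tail $u^{-1}(ab^na)$ contains an $a$, so by Step 1 the kernel at $(u,\delta)$ equals $c^{-(j+1)}2^{-1}$ for both values of $\delta$; all other $x$ give $0$ in the limit. Thus $z_n$ converges to a point $\eta$ with $K(\cdot,\eta\,|\,0)=\tfrac12(h_0+h_1)$. By Proposition \ref{p:all-bnry-harm}, $h_0$ and $h_1$ are $\infty$-harmonic, since $\bar\mu$ has finite range. They are also normalized and are not proportional. Therefore $K(\cdot,\eta\,|\,0)$ is a nontrivial convex combination of them, so it is not minimal. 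By uniqueness in Theorem \ref{thm:Martin-Poisson:sub-Markov}, $\nu_\eta$ is not a Dirac mass.

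\textbf{Main obstacle.} The delicate part is the geodesic bookkeeping in Step 1. I must verify that there are no extra geodesics: a path realizing a letter in some other way, or a backtracking detour, would have to be strictly longer. I must also handle carefully the edge cases $k(u)=0$, $\delta=1$ and the base point $(e,0)$, and check that $|(ab^j,1)|=j+1$. I would settle this by projecting to $\F_2$: a path of length $|w|$ must project to the reduced word, and it then remains only to count the flag choices.
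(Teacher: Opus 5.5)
Your proposal is correct and follows essentially the same route as the paper: $(ab^n,0)$ and $(ab^n,1)$ give distinct $0$-Martin limits $h_0\neq h_1$ over the same Gromov point $ab^\infty$, and a sequence $(ab^na,\cdot)$ with exactly two geodesics yields the non-minimal kernel $\tfrac12(h_0+h_1)$. The differences are cosmetic. You use $(ab^na,0)$ instead of the paper's $(ab^na,1)$, and you organize the computation through a general flag-counting formula for $K(\cdot,(w,\varepsilon)\,|\,0)$. That formula correctly keeps the factor $\tfrac12$ at the points $(ab^j,\delta)$, which the paper's displayed formula for $h_2$ omits.
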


\begin{proof}
    We consider the three sequences of $\Gamma$ defined by $y_n^{(0)} = (ab^n, 0)$, $y_n^{(1)} = (ab^n, 1)$ and $y_n^{(2)} = (ab^na, 1)$. First, we compute the geodesics from $(e, 0)$ to $y_n^{(i)}$. There is a unique geodesic from $(e, 0)$ to $y_n^{(0)}$ which is given by
    \begin{equation*}
        \{(e, 0), (a, 0), (ab, 0), \dots, (ab^n, 0)\}.
    \end{equation*}
    In particular, $|y_n^{(0)}| = n + 1$. There is also a unique geodesic from $(e, 0)$ to $y_n^{(1)}$ which is given by
    \begin{equation*}
        \{(e, 0), (a, 1), (ab, 1), \dots, (ab^n, 1)\}.
    \end{equation*}
    In particular, $|y_n^{(1)}| = n + 1$. Indeed, to go from $(e, 0)$ to $(ab^n, 1)$, one needs to use the element $(a, 1)$ or the element $(a^{-1}, 1)$ at some point. However, if this element is not the first increment or is not $(a, 1)$, then one needs to simplify the up-coming letter $a$ or $a^{-1}$ on the factor $\mathbb F_{2}$, which yields a path of length at least $n + 2$. Similarly, to go from $(e, 0)$ to $y_n^{(2)}$, one also needs to use the element $(a, 1)$ at some point. This time, this has to be either the first of the last increment of the geodesic. Thus, there are exactly two geodesics from $(e, 0)$ to $y_n^{(2)}$ which are given by
    \begin{equation*}
        \{(e, 0), (a, 0), (ab, 0), \dots, (ab^n, 0), (ab^na, 1)\}
    \end{equation*}
    and
    \begin{equation*}
        \{(e, 0), (a, 1), (ab, 1), \dots, (ab^n, 1), (ab^na, 1)\}.
    \end{equation*}
    In particular, $|y_n^{(2)}| = n + 2$.

    We start by showing that all elements $y_n^{(0)}$ are aligned on a geodesic. Consequently, this sequence converges to a point in the Gromov boundary of $\Gamma$ that we denote by $ab^\infty$. Second, note that
    \begin{equation*}
        y_n^{(2)} = y_n^{(0)}\cdot (a, 1)
    \end{equation*}
    and
    \begin{equation*}
        y_n^{(1)} = y_n^{(0)}\cdot (a, 1) \cdot (a^{-1}, 0) = y_n^{(2)}\cdot (a^{-1}, 0).
    \end{equation*}
    In particular for every $n$, the three points $y_n^{(i)}$ stay within a distance at most $2$ of each other. Thus, the three sequences $y_n^{(i)}$ converge to the same point $ab^\infty$ in the Gromov boundary.

    Next, we prove that these three sequences converge to the 0-Martin boundary and compute their respective limit according to the $0$-Martin kernel. We set
    \begin{equation*}
        \mathcal{H}_0 = \{(e, 0)\}\cup \{(ab^n, 0), n\geq 0\}
    \end{equation*}
    and
    \begin{equation*}
        \mathcal{H}_1 = \{(e, 0)\}\cup\{(ab^n, 1), n\geq 0\}.
    \end{equation*}

    Let $x\in \Gamma$ and set $m = |x|$. If $x\notin\mathcal{H}_0$, then $x$ is not on a geodesic from $e$ to $y_n^{(0)}$ for any $n$. Thus, $P^{n + 1 - m}(x, y_n^{(0)}) = 0$. On the contrary, assume that $x = (ab^{m - 1}, 0)$, with $m\geq 1$ or that $x = (e, 0)$ with $m = 0$. Then, for every $n\geq m - 1$, there is a unique trajectory for the random walk of length $n + 1 - m$ from $x$ to $y_n^{(0)}$ and
    \begin{equation*}
        P^{n - m + 1}(x, y_n^{(0)}) = \left (\frac{1 - \alpha}{6}\right)^{n - m + 1},
    \end{equation*}
    hence
    \begin{equation*}
        \frac{P^{n - m + 1}(x, y_n^{(0)})}{P^{n + 1}((e, 0), y_n^{(0)})}\underset{n\to \infty}{\longrightarrow}\left (\frac{1 - \alpha}{6}\right)^{-m}.
    \end{equation*}
    Consequently, $y_n^{(0)}$ converges to a point in the $0$-Martin boundary and the limit $h_0$ of the $0$-Martin kernel is given by
    \begin{equation*}
        h_0(x) = 
        \begin{dcases}
            0, & \text{if } x\notin\mathcal{H}_{0}, \\
            \left(\frac{1 - \alpha}{6}\right)^{-|x|}, & \text{otherwise.}
        \end{dcases}
    \end{equation*}
    Next, we consider $y_n^{(1)}$. Let $x\in \Gamma$ and set $m = |x|$. Since there also exists a unique geodesic from $(e, 0)$ to $y_n^{(1)}$, we similarly find that $P^{n - m + 1}(x, y_n^{(1)}) = 0$ if $x\notin \mathcal{H}_1$ and that
    \begin{equation*}
        \frac{P^{n - m + 1}(x, y_n^{(1)})}{P^{n + 1}((e, 0), y_n^{(1)})}\underset{n\to\infty}{\longrightarrow}\left(\frac{1 - \alpha}{6}\right)^{-m}
    \end{equation*}
    otherwise.
    Thus, $y_n^{(1)}$ converges to a point in the 0-Martin boundary and the limit $h_1$ of the $0$-Martin kernel is given by
    \begin{equation*}
        h_1(x) =
        \begin{dcases}
            0, & \text{if } x\notin\mathcal{H}_{1}, \\
            \left(\frac{1 - \alpha}{6}\right)^{-|x|}, & \text{otherwise.}
        \end{dcases}
    \end{equation*}
    
    Before we compute the limit of $y_n^{(2)}$, we note that $h_0\neq h_1$ and, in particular, $y_n^{(0)}$ and $y_n^{(1)}$ do not converge to the same point in the 0-Martin boundary, while they both converge to $ab^\infty$ in the Gromov boundary. Together with Proposition~\ref{p:zeroMartintoGromov}, this already shows that the $0$-Martin compactification is bigger than the Gromov compactification.

    \medskip
    Next, we now show that $y_n^{(2)}$ also converges and that the limit of the $0$-Martin kernel is not a minimal $\infty$-harmonic function. Set
    \begin{equation*}
        \gamma_n^{(0)} = \{(e, 0), (a, 0), (ab, 0), \dots, (ab^n, 0), (ab^na, 1)\}
    \end{equation*}
    and
    \begin{equation*}
        \gamma_n^{(1)} = \{(e, 0), (a, 1), (ab, 1), \dots, (ab^n, 1), (ab^na, 1)\}.
    \end{equation*}
    These are the only two geodesics from $(e, 0)$ to $y_n^{(2)}$. Let $x\in\Gamma$, $x\neq (e, 0)$, and set $m = |x|$, so that $m\geq 1$. If $x\notin\mathcal{H}_0\cup\mathcal{H}_1$, then for $n \geq m - 1$ we have that $x$ is neither on $\gamma_n^{(0)}$, nor on $\gamma_n^{(1)}$, and so $P^{n - m + 2}(x, y_n^{(2)}) = 0$. Assume now that $x = (ab^{m - 1}, 0)$. Then, $x$ is on $\gamma_n^{(0)}$ for $n \geq m - 1$. Moreover, the two geodesics $\gamma_n^{(0)}$ and $\gamma_n^{(1)}$ only intersect at $(e, 0)$ and $y_n^{(2)}$. Since $x\neq (e, 0)$, we find that there is exactly one trajectory for the random walk of length $n + 2 - m$ from $x$ to $y_n^{(2)}$ and that
    \begin{equation*}
        P^{n - m + 2}(x, y_n^{(2)}) = \left (\frac{1 - \alpha}{6}\right)^{n - m + 2}.
    \end{equation*}
    Similarly, if $x = (ab^{m - 1}, 1)$, then
    \begin{equation*}
        P^{n - m + 2}(x, y_n^{(2)}) = \left (\frac{1 - \alpha}{6}\right)^{n - m + 2}.
    \end{equation*}
    Finally, since there are exactly $2$ trajectories of length $n + 2$ from $(e, 0)$ to $y_n^{(2)}$ we get
    \begin{equation*}
        P^{n + 2}((e, 0), y_n^{(2)}) = 2 \left(\frac{1 - \alpha}{6}\right)^{n + 2}.
    \end{equation*}
    We deduce that if $x \neq (e, 0)\in \mathcal{H}_0\cup\mathcal{H}_1$, then
    \begin{equation*}
        \frac{P^{n - m + 2}(x, y_n^{(1)})}{P^{n + 2}((e, 0), y_n^{(1)})}\underset{n\to\infty}{\longrightarrow}\frac{1}{2}\left(\frac{1 - \alpha}{6}\right)^{-m}.
    \end{equation*}
    Thus, $y_n^{(2)}$ also converges to a point in the $0$-Martin boundary, and the limit $h_2$ of the $0$-Martin kernel is given by
    \begin{equation*}
        h_2(x) =
        \begin{dcases}
            0, & \text{if } x\notin\mathcal{H}_0\cup \mathcal{H}_1, \\
            1, & \text{if } x = (e, 0), \\
            \left(\frac{1 - \alpha}{6}\right)^{-|x|}, & \text{otherwise,}
        \end{dcases}
    \end{equation*}
    and see that
    \begin{equation*}
        h_2 = \frac{1}{2}\left(h_0 + h_1\right).
    \end{equation*}
    Therefore, $h_2$ is not minimal.
\end{proof}

\section{Minimal space-time boundary}
\label{s:minimalspacetime}

In this section, for an admissible lazy random walk on $\Gamma$, we will identify the minimal space-time Martin boundary with the whole disjoint union of minimal $\lambda$-Martin boundaries for $\lambda \in [0,R]$. We begin with an key definition.

\begin{definition}

    A function $f\colon\ST\to \mathbb R$ is called \textit{space-time harmonic} if it satisfies $P_{\ST}f = f$, where $P_{ST}$ is the Markov kernel associated with the space-time Markov chain $(Y_n)_{n\in \N}$. A non-negative space-time harmonic function $f$ is said to be \textit{normalized} if $f((e, 0)) = 1$. 
    Furthermore, a non-negative space-time harmonic function $f$ is called \textit{minimal} if for every other non-negative space-time harmonic function $g$ satisfying $c \cdot g\leq f$ for some constant $c > 0$, we must have that $f = \alpha g$ for some $\alpha>0$. 
\end{definition}

Notice that the Markov chain $(Y_n)_{n\in \N}$ is far from being irreducible on $\ST$. Thus, the classical proof of the minimum principle (see \cite[(1.15)]{Woessbook}) for super-harmonic functions does not apply to the chain $(Y_n)_{n\in \N}$. So, nothing apriori prevents a non-zero positive space-time harmonic function from vanishing on some points in the space-time set. In fact, we will construct non-zero positive space-time harmonic functions that do vanish on some points of the space-time set below.

In \cite[Theorem~3.1]{LS63}, for a wide class of Markov chains, strictly positive minimal space-time harmonic functions for an \emph{$\alpha$-modified} version of the space-time Markov chain were characterized. These were shown to have the form $f_{c, h}\colon (x, m) \mapsto c^m h(x)$, where $h$ is a $\frac{1 - \alpha}{1 - \alpha c}$-harmonic function for the original Markov chain with $\frac{1}{\alpha} > c \geq 0$. Thus, if the original Markov chain is irreducible, then every non-zero $f_{c,h}$ is strictly positive.

Since modified space-time Markov chains and their associated harmonic functions are defined on a the Cartesian product state space $\Gamma \times \mathbb{N}$, which is a bigger state space than $\ST$, in general it is unclear what the precise relationship is between these different notions of minimal space-time harmonic functions (see also \cite{Molchanov} for results on space-time Markov chains on Cartesian products). However, a description of minimal space-time harmonic functions for the space-time Markov chain on $\ST$ was obtained in \cite[Theorem~3.2]{LS63} for certain nearest neighbor Markov chains on the non-negative integers, and, just as it is shown there, we will see below that for lazy random walks there always exist non-negative non-zero space-time harmonic functions that still vanish on some points of the space-time set.

For $f$ space-time harmonic, we have that
\begin{equation*}
    f((x, m)) = \sum_{y\in \Gamma}\mu(x^{-1}y)f((y, m + 1)) = \sum_{y\in \Gamma}P^{n}(x, y)f((y, m + n))
\end{equation*}
for $(x, m)\in \ST$ and $n\geq 0$, so we find that
\begin{equation}
\label{eq:space-time:harmonic}
    f((x, m))\geq P^{n}(e, e)f((x, m + n)).
\end{equation}
For each $n\geq 0$, denote by $g_{n} \colon\ST\to\R$ the function defined by 
\begin{equation*}
    g_n((x, m)) = f((x, m + n)).
\end{equation*}
Notice that it is well defined, since for every $(x, m)\in \ST$ and for every $n\geq 0$, we have that $(x, m + n) \in \ST$ as well since we assumed that $\mu$ is lazy.

\begin{lemma}
The function $g_{n}$ is space-time harmonic for each $n\geq 0$.
\end{lemma}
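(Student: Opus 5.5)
The plan is a direct verification from the definition of $P_{\ST}$. The claim is that the time shift $g_n((x,m)) = f((x,m+n))$ of a space-time harmonic $f$ is again space-time harmonic. I would first confirm that $g_n$ is well defined on all of $\ST$. This uses laziness, as already noted before the statement: if $(x,m)\in\ST$ then $P^{m+n}(e,x)\geq P^m(e,x)\,\mu(e)^n>0$, so $(x,m+n)\in\ST$.

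Next, for $(x,m)\in\ST$ I would compute
\begin{equation*}
    P_{\ST}g_n((x,m)) = \sum_{(y,k)\in\ST}P_{\ST}((x,m),(y,k))\,g_n((y,k)) = \sum_{y\,:\,(y,m+1)\in\ST}\mu(x^{-1}y)\,f((y,m+1+n)).
\end{equation*}
I would compare this with the harmonicity of $f$ at the point $(x,m+n)\in\ST$, which reads
\begin{equation*}
    f((x,m+n)) = \sum_{y\,:\,(y,m+n+1)\in\ST}\mu(x^{-1}y)\,f((y,m+n+1)).
\end{equation*}

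The only point requiring care, and the only potential obstacle, is that the two sums run over possibly different index sets: $\{y:(y,m+1)\in\ST\}$ versus $\{y:(y,m+n+1)\in\ST\}$. I would show that the terms with $\mu(x^{-1}y)>0$ agree. If $\mu(x^{-1}y)>0$ and $(x,m)\in\ST$, then $P^{m+1}(e,y)\geq P^m(e,x)\,\mu(x^{-1}y)>0$, so $(y,m+1)\in\ST$. By laziness this also gives $(y,m+n+1)\in\ST$. So both sums range over exactly the set $\{y:\mu(x^{-1}y)>0\}$, and the remaining terms vanish.

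Hence $P_{\ST}g_n((x,m)) = f((x,m+n)) = g_n((x,m))$, which proves the lemma. Alternatively, the statement follows by induction from the case $n=1$, since $g_{n}$ is $g_1$ applied $n$ times.
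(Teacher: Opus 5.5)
Your proposal is correct and is essentially the paper's proof: a direct computation of $P_{\ST}g_n$ at $(x,m)$, using the harmonicity of $f$ at $(x,m+n)$. Your check that both sums effectively range over $\{y : \mu(x^{-1}y)>0\}$, which relies on laziness, is a detail the paper suppresses by writing the sums over all $y\in\Gamma$; it does not change the argument.
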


\begin{proof}

    For $(x, m)\in \ST$, we have
    \begin{equation*}
        \begin{split}
            \sum_{y\in \Gamma}\mu(x^{-1}y)g_n((y, m + 1)) & = \sum_{y\in \Gamma}\mu(x^{-1}y)f((y, m + n + 1)) \\
            & = f((x, m + n)) = g_{n}((x, m)),
        \end{split}
    \end{equation*}
    which is the desired equality.
\end{proof}

Now, assume that $f$ is a normalized minimal space-time harmonic function. Using equation \eqref{eq:space-time:harmonic}, we get $g_{n}\leq c_{n}f$, where $c_{n} = 1/P^{n}(e, e)$. Thus, there exists a constant $C_{n}$ such that $g_{n} = C_{n}f$. Evaluating it at $(e, 0)$, we obtain that $C_n = f((e, n))$. Hence, for every $(x, m)\in\ST$ and every $n\geq 0$,
\begin{equation}
\label{eq:minimal}
    f((x, m + n)) = f((e, n))f((x, m)).
\end{equation}

We will now use the word semi-norm $|\cdot|$ introduced in Section~\ref{s:zeroMartin}. By induction on equation \eqref{eq:minimal}, for every $x\in \Gamma$ and every $m\geq |x|$,
\begin{equation}
\label{equation2minimal}
    f((x, m)) = \lambda^{m - |x|}f((x, |x|)),
\end{equation}
where $\lambda = f((e, 1))$. Now suppose that $\lambda >0$ and denote by $g \colon\Gamma\to \R$ the function given by 
$$
g(x) = \lambda^{-|x|}f((x,|x|)),
$$ 
which is clearly well-defined since $\mu$ is lazy.

\begin{lemma} \label{lemma:minimal:lambda-harmonic}
The function $g$ is normalized minimal $\lambda^{-1}$-harmonic for $\lambda > 0$.
\end{lemma}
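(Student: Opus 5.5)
The plan is to first rewrite $f$ in product form. Combining \eqref{equation2minimal} with the definition of $g$ gives, for every $(x,m)\in\ST$ (so $m\geq |x|$),
\begin{equation*}
    f((x,m)) = \lambda^{m-|x|}f((x,|x|)) = \lambda^{m} g(x).
\end{equation*}
The normalization is then immediate: $g(e)=\lambda^0 f((e,0))=1$.

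Next I will verify $\lambda^{-1}$-harmonicity directly from space-time harmonicity of $f$. Fix $x\in\Gamma$ and take $m=|x|$. Every $y$ with $\mu(x^{-1}y)>0$ satisfies $|y|\leq |x|+1=m+1$, so $(y,m+1)\in\ST$ (using laziness to pass from $|y|$ up to $m+1$). Hence
\begin{equation*}
    \lambda^{m}g(x)=f((x,m))=\sum_{y}\mu(x^{-1}y)f((y,m+1))=\lambda^{m+1}\sum_y \mu(x^{-1}y)g(y).
\end{equation*}
Dividing by $\lambda^{m+1}>0$ gives $P_\mu g=\lambda^{-1}g$. Non-negativity of $g$ is inherited from $f$.

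For minimality, let $h\geq 0$ be $\lambda^{-1}$-harmonic on $\Gamma$ with $h\leq Cg$. I will lift it to $\ST$ by setting $F((x,m)):=\lambda^m h(x)$. The computation above run in reverse shows $F$ is space-time harmonic, since for any $(x,m)\in\ST$,
\begin{equation*}
    \sum_y\mu(x^{-1}y)F((y,m+1))=\lambda^{m+1}P_\mu h(x)=\lambda^m h(x).
\end{equation*}
Moreover $F\leq Cf$. Minimality of $f$ (in the sense $C^{-1}F\leq f$) then gives $F=\alpha f$ for some constant $\alpha$, or $F=0$. Restricting to $m=|x|$ yields $h=\alpha g$, which is exactly minimality of $g$ in the sense of Definition~\ref{d:minimality}. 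The case $h\equiv 0$ is trivial; otherwise $\alpha>0$.

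The only point needing care is that $\lambda\leq R$, so that $g$ genuinely lies in the range where $\lambda^{-1}$-harmonic functions live. This is automatic, because a positive $t$-harmonic function exists only for $t\geq R^{-1}$ \cite[Lemma~7.2]{Woessbook}. I also need $g\not\equiv 0$, which holds since $g(e)=1$. I expect no real obstacle beyond checking that the relevant points $(y,m+1)$ lie in $\ST$, and laziness handles that.
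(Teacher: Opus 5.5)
Your proof is correct and is essentially the paper's argument: harmonicity of $g$ comes from space-time harmonicity of $f$ at $(x,|x|)$ combined with \eqref{equation2minimal}, and minimality comes from lifting $h$ to $(x,m)\mapsto\lambda^m h(x)$, checking that this lift is space-time harmonic and dominated by a multiple of $f$, and then invoking minimality of $f$. As a minor remark, you do not need laziness to get $(y,|x|+1)\in\ST$, since $P^{|x|+1}(e,y)\geq P^{|x|}(e,x)\mu(x^{-1}y)>0$ already; your side comment that $\lambda\leq R$ is correct, but the lemma itself does not require it.
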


\begin{proof}
We first prove that $g$ is $\lambda^{-1}$ harmonic. We need to prove that for every $x\in \Gamma$,
    \begin{equation*}
        \lambda^{-1}g(x) = \sum_{y\in \Gamma}\mu(x^{-1}y)g(y) = \sum_{y\in \Gamma}\mu(x^{-1}y)\lambda^{-|y|}f((y, |y|)).
    \end{equation*}
Note that if $\mu(x^{-1}y) > 0$, we necessarily have that $|x| + 1\geq |y|$. By equation \eqref{equation2minimal}, we get
    \begin{equation*}
        \lambda^{-|y|}f((y, |y|)) = \lambda^{-1 - |x|}f((y, |x| + 1)).
    \end{equation*}
Since $f$ is space-time harmonic, we obtain
    \begin{equation*}
        \begin{split}
            \sum_{y\in \Gamma}\mu(x^{-1}y)g(y) & = \lambda^{-1 - |x|}\sum_{y\in \Gamma}\mu(x^{-1}y)f((y, |x| + 1)) \\
            & = \lambda^{-1 - |x|}f((x, |x|)) = \lambda^{-1}g(x).
        \end{split}
    \end{equation*}
Finally, as $f$ is normalized, it follows that $g(e) = 1$.

Next, we prove that $g$ is minimal. Let $h$ be a normalized non-negative $\lambda^{-1}$-harmonic function on $\Gamma$ such that $h\leq cg$ for some constant $c > 0$.
Define for $(x, m)\in \ST$, the function 
    \begin{equation*}
        \Tilde{h}\colon\ST\to \R, (x, m)\mapsto \lambda^mh(x).
    \end{equation*}
We have $\Tilde{h}((x, m))\leq c\lambda^mg(x) = c\lambda^{m - |x|}f(x, |x|)$ for every $(x, m)\in \ST$. Using equation \eqref{equation2minimal}, we obtain $\Tilde{h}\leq c f$. Straightforward computations show that $\Tilde{h}$ is a non-negative space-time harmonic function. Hence, by minimality of $f$, there exists $C > 0$ such that $\Tilde{h} = Cf$. That is, for every $x\in \Gamma$,
    \begin{equation*}
        \lambda^{|x|}h(x) = \Tilde{h}(x, |x|) = Cf((x, |x|)) = C\lambda^{|x|}g(x).
    \end{equation*}
Thus, we have $h = Cg$, i.e. the function $g$ is indeed minimal.
\end{proof}

We are now left dealing with the case of minimal space-time harmonic functions $f$ such that $\lambda = f((e, 1)) = 0$. According to equation \eqref{eq:minimal}, we have that $f((x, m)) = 0$ for all $(x, m)\in \ST$ with $m > |x|$. Thus, the function $f$ can be positive only at $(x, |x|)$ for $x\in \Gamma$. Denote by $g \colon\Gamma\to \R$ the function given by 
$$
g(x) =f((x, |x|)),
$$ 
which is again well-defined when $\mu$ is lazy.

\begin{lemma}
\label{lemma:minimal:infinity-harmonic}

    The function $g$ is minimal $\infty$-harmonic for $\lambda = 0$.
\end{lemma}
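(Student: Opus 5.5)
We follow the pattern of Lemma~\ref{lemma:minimal:lambda-harmonic}, replacing the rescaling by $\lambda^{-|x|}$ with the observation that $f$ is concentrated on the ``diagonal'' $\{(x,|x|) : x\in\Gamma\}$. First we check $\infty$-harmonicity. Fix $x\in\Gamma$ and apply space-time harmonicity of $f$ at $(x,|x|)$:
\begin{equation*}
    g(x) = f((x,|x|)) = \sum_{y\in\Gamma}\mu(x^{-1}y)f((y,|x|+1)).
\end{equation*}
If $\mu(x^{-1}y)>0$ then $|y|\leq |x|+1$. When $|y|\leq |x|$ we have $|x|+1>|y|$, so $f((y,|x|+1))=0$ by the consequence of \eqref{eq:minimal} with $\lambda=0$. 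When $|y|=|x|+1$ we get $f((y,|x|+1)) = g(y)$. By the definition of $\bar\mu_x$, this yields $g(x)=\sum_y \bar\mu_x(y)g(y)$. Normalization is immediate: $g(e)=f((e,0))=1$.

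For minimality, let $h$ be a non-negative $\infty$-harmonic function with $h\leq cg$. We lift it to $\ST$ by setting $\tilde h((x,m)) := h(x)$ if $m=|x|$ and $\tilde h((x,m)) := 0$ if $m>|x|$. Then $\tilde h\leq cf$ pointwise, since $f$ also vanishes off the diagonal. The key verification is that $\tilde h$ is space-time harmonic.

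At a diagonal point $(x,|x|)$, the same computation as above, run in reverse, shows that $\sum_y\mu(x^{-1}y)\tilde h((y,|x|+1)) = \sum_y\bar\mu_x(y)h(y)=h(x)$. At an off-diagonal point $(x,m)$ with $m>|x|$, every neighbour $(y,m+1)$ satisfies $|y|\leq |x|+1< m+1$. So all terms vanish, matching $\tilde h((x,m))=0$. Finally, minimality of $f$ gives $\tilde h = Cf$, and restricting to the diagonal gives $h=Cg$.

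The point needing care is exactly this off-diagonal verification, since it uses $m+1>|y|$ for every neighbour $y$. That inequality holds because $|y|\leq|x|+1\leq m$. One should also confirm that $\tilde h$ is well defined on all of $\ST$, which uses laziness exactly as for $g$.

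There is also a normalization subtlety in the definition of minimality: the definition of minimal $\infty$-harmonic compares normalized $h$, and $h\le Cg$ together with $h=C'g$ forces $C'=h(e)=1$. We would state this as ``$h$ is a constant multiple of $g$'', consistent with Definition~\ref{d:minimality}.
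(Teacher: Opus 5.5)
Your proof is correct and follows the paper's argument: you verify $\infty$-harmonicity of $g$ by applying space-time harmonicity of $f$ at $(x,|x|)$ and discarding the terms with $|y|\le|x|$, where $f$ vanishes; for minimality you lift $h$ to a function $\tilde h$ supported on the diagonal and invoke minimality of $f$. You also spell out two points the paper covers only with ``as before'': the off-diagonal harmonicity check via $|y|\le|x|+1\le m<m+1$, and the normalization that forces the constant to equal $1$.
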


\begin{proof}

    Since $f((x, m)) = 0$ whenever $m > |x|$, we get
    \begin{equation*}
        \begin{split}
            \sum_{y\in \Gamma}\bar\mu_x(y)f((y, |y|)) & = \sum_{y,\, |y| = |x| + 1}\mu(x^{-1}y)f((y, |x| + 1)) \\
            & = \sum_{(y, n)\in \ST}\mu(x^{-1}y)f((y, |x| + 1)) \\
            & = f((x, |x|)),
        \end{split}
    \end{equation*}
    where the last equality follows from space-time harmonicity of $f$. This proves that $g$ is non-negative $\infty$-harmonic.
    
    Now, let $h$ be a normalized non-negative $\infty$-harmonic function such that $h\leq cg$ for some $c > 0$. Define $\Tilde{h}((x, m)) = h(x)$ if $m = |x|$ and $0$ otherwise. As before, we get that $\Tilde{h}$ is non-negative space-time harmonic satisfying $\Tilde{h}\leq cf$. Using minimality, we thus obtain that $\Tilde{h} = cf$, so that $h = cg$.
\end{proof}

\begin{theorem}
\label{thm:minimal:harmonic:form}

    Every minimal non-negative space-time harmonic function $f$ can be written for $(x, m)\in\ST$ as either
    \begin{equation*}
        f_{\lambda, \xi}((x, m)) = \lambda^{m}K(x, \xi \,|\, \lambda)
    \end{equation*}
    for some $\xi\in \partial_{M, \lambda}^m\Gamma$ when $f((e, 1)) = \lambda > 0$, or as
    \begin{equation*}
        f_{0, \xi}((x, m)) = \mathbf 1_{m = |x|}K(x, \xi \,|\, 0)    
    \end{equation*}
    for some $\xi\in \partial^{m}_{M, 0}\Gamma$ when $f((e,1)) = 0$. 
    
    In particular, a minimal space-time harmonic function is strictly positive if and only if $\lambda > 0$.
\end{theorem}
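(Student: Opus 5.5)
The plan is to combine the two reduction lemmas already established with the identification of minimal harmonic functions as minimal Martin kernels. Let $f$ be a minimal non-negative space-time harmonic function, normalized so that $f((e,0))=1$. As shown above, minimality together with \eqref{eq:space-time:harmonic} gives the multiplicative relation \eqref{eq:minimal}, and hence \eqref{equation2minimal} with $\lambda := f((e,1))$. We then split into the cases $\lambda>0$ and $\lambda=0$.

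\emph{Case $\lambda>0$.} Lemma \ref{lemma:minimal:lambda-harmonic} shows that $g(x)=\lambda^{-|x|}f((x,|x|))$ is a normalized minimal $\lambda^{-1}$-harmonic function on $\Gamma$.
\begin{itemize}
\item Since a $\lambda^{-1}$-harmonic function exists, \cite[Lemma~7.2]{Woessbook} forces $\lambda^{-1}\ge R^{-1}$, that is, $\lambda\in(0,R]$.
\item By Theorem \ref{t:MartinPoissonrepresentation}, together with uniqueness of the representing measure on the minimal boundary, $g$ has a representing probability measure. Extremality of $g$ makes this measure a Dirac mass $\delta_\xi$. Hence $g=K(\cdot,\xi\,|\,\lambda)$ with $\xi\in\partial^m_{M,\lambda}\Gamma$; this is the standard fact that extreme points correspond to minimal boundary points.
\item By \eqref{equation2minimal}, $f((x,m))=\lambda^{m-|x|}f((x,|x|))=\lambda^m g(x)$, which is the formula for $f_{\lambda,\xi}$.
\end{itemize}
Strict positivity in this case follows from the minimum principle for the irreducible chain $P$: $g>0$ everywhere, so $f>0$ on $\ST$.

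\emph{Case $\lambda=0$.} By \eqref{eq:minimal}, $f((x,m))=f((e,m-|x|))\,f((x,|x|))$. For $m>|x|$ this factor is $f((e,1))^{m-|x|}=0$, so $f$ is supported on $\{(x,|x|)\}$.
\begin{itemize}
\item Lemma \ref{lemma:minimal:infinity-harmonic} shows that $g(x)=f((x,|x|))$ is minimal $\infty$-harmonic, and $g(e)=1$.
\item The sub-Markov chain $(\bar\mu_x)_x$ is transient and accessible from $e$, since every $y$ is reached at time $|y|$. It also has finite range. So Theorem \ref{thm:Martin-Poisson:sub-Markov} applies and gives a unique probability measure on $\partial^m_{M,0}\Gamma$ representing $g$.
\item Minimality forces this measure to be a point mass. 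The argument: if the measure charged two disjoint Borel sets, restricting it to one of them would produce a harmonic minorant of $g$ not proportional to $g$, contradicting minimality (uniqueness of the representation rules out proportionality). Hence $g=K(\cdot,\xi\,|\,0)$ with $\xi\in\partial^m_{M,0}\Gamma$.
\item Therefore $f=f_{0,\xi}$.
\end{itemize}
This $f$ vanishes at $(e,1)\in\ST$, which lies in $\ST$ because $\mu$ is lazy. So $f$ is not strictly positive, which completes the ``in particular'' statement.

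The main obstacle is the step ``minimal implies Dirac representing measure.'' One route is to verify that the $\infty$-harmonic notion of minimality (domination $h\le Cg$) agrees with Dynkin's extremality, so that \cite{DYEU69} directly identifies minimal functions with kernels at minimal boundary points. The other route is the restriction argument sketched above. A further point to check there is that a sub-measure restriction of a representing measure yields a genuinely harmonic minorant, not merely a superharmonic one. This holds because every boundary kernel is harmonic by Proposition \ref{p:all-bnry-harm}, and finite range allows the sum and integral to be interchanged.
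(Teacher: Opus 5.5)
Your proposal is correct and follows the paper's proof. You reduce via Lemmas~\ref{lemma:minimal:lambda-harmonic} and~\ref{lemma:minimal:infinity-harmonic} to a minimal $\lambda^{-1}$-harmonic (respectively $\infty$-harmonic) function $g$, identify $g$ with a Martin kernel at a minimal boundary point, and recover $f$ from \eqref{equation2minimal}; the paper simply invokes the cited representation theorems at the identification step, and your added details (the check $\lambda\le R$, the restriction argument forcing the unique representing measure to be a Dirac mass, and the positivity clause via the minimum principle and $(e,1)\in\ST$) are all sound.
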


\begin{proof}

    First, assume that $\lambda =f((e, 1)) > 0$. Then by Lemma \ref{lemma:minimal:lambda-harmonic}, the associated function $g \colon \Gamma \rightarrow \mathbb{R}$ given by $g(x) = \lambda^{-|x|}f((x,|x|))$ is a minimal $\lambda^{-1}$-harmonic function. Using \cite[Lemma~7.2]{Woessbook}, we may write $g = K(\cdot, \xi \,|\, \lambda)$ for some point $\xi\in\partial_{M, \lambda}^m\Gamma$. Combining this with equation \eqref{equation2minimal}, we get that $f((x, m)) = \lambda^{m}K(x, \xi \,|\, \lambda)$.
    
    Similarly, if $\lambda = f((e, 1)) = 0$, then by Lemma \ref{lemma:minimal:infinity-harmonic}, the associated function $g \colon \Gamma \rightarrow \mathbb{R}$ given by $g(x) = f(x, |x|)$ is a minimal $\infty$-harmonic function. By Theorem \ref{thm:Martin-Poisson:sub-Markov}, we have that $g = K(\cdot, \xi \,|\, 0)$ for some $\xi \in \partial_{M,0}^m \Gamma$. Combining this with equation \eqref{equation2minimal}, we get that $f((x, m)) = \mathbf 1_{m = |x|}K(x, \xi \,|\, 0)$.
\end{proof}

In particular, a minimal space-time harmonic function $f$ is completely determined by $\lambda$ and $\xi$, so the minimal space-time Martin boundary is seen to be naturally identifiable with a subset of the disjoint union of $\lambda$-Martin boundaries, i.e. we have the following injective map
\begin{equation*}
    \iota\colon \partial^{m}_{\ST}\Gamma \xhookrightarrow{} \bigsqcup_{\lambda\in [0, R]}\partial_{M, \lambda}^m\Gamma.
\end{equation*}

Recall that $\tilde K (x, \xi \,|\, \lambda) = \lambda^{|x|}K(x, \xi \,|\, \lambda)$ for $\lambda > 0$ and $\xi \in \partial_{M,\lambda}^m\Gamma$, and $\tilde K(x, \xi \,|\, 0) = K(x, \xi \,|\, 0)$ for $\xi \in \partial_{M,0}^m\Gamma$. Now, we endow the right-hand side with the natural topology of point-wise convergence in which a sequence $(\lambda_{n}, \xi_{n})$ converges to the point $(\lambda, \xi)$ if and only if $\lambda_{n}\to\lambda$ and $\widetilde{K}(x, \xi_{n} \,|\, \lambda_{n}) \to \widetilde{K}(x, \xi \,|\, \lambda)$. We now show that the induced topology on the space-time Martin boundary coincides with the space-time topology ensuring that $\iota$ is indeed continuous.

\begin{corollary}
\label{prop:minimal:convergence}
The map $\iota$ is continuous.
\end{corollary}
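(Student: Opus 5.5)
Since $\partial^m_{\ST}\Gamma$ is a subspace of the metrizable space $\Delta_{\ST}\Gamma$, it suffices to check sequential continuity. Take a sequence $\eta_n \in \partial^m_{\ST}\Gamma$ converging to $\eta \in \partial^m_{\ST}\Gamma$, and write $\iota(\eta_n) = (\lambda_n,\xi_n)$ and $\iota(\eta) = (\lambda,\xi)$. By Corollary \ref{c:pointwiseconvergencespacetime}, for every $(x,m)\in\ST$ we have
\begin{equation*}
    K_{\ST}((x,m),\eta_n) \to K_{\ST}((x,m),\eta).
\end{equation*}
By Theorem \ref{thm:minimal:harmonic:form}, these kernels are the minimal space-time harmonic functions $f_{\lambda_n,\xi_n}$ and $f_{\lambda,\xi}$. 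The whole argument consists in reading off $\lambda$ and $\widetilde K$ from suitable evaluations of these functions.

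For the parameter, evaluate at $(e,1)\in\ST$, which lies in $\ST$ by laziness. Since $f_{\lambda,\xi}((e,1)) = \lambda$ in both cases (for $\lambda>0$ because $K(e,\xi\,|\,\lambda)=1$, and for $\lambda=0$ because $\mathbf 1_{1=|e|}=0$), we get $\lambda_n \to \lambda$ at once.

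For the kernels, evaluate at $(x,|x|)\in\ST$. If $\lambda_n>0$, then $f_{\lambda_n,\xi_n}((x,|x|)) = \lambda_n^{|x|}K(x,\xi_n\,|\,\lambda_n) = \widetilde K(x,\xi_n\,|\,\lambda_n)$. If $\lambda_n=0$, then $f_{0,\xi_n}((x,|x|)) = K(x,\xi_n\,|\,0) = \widetilde K(x,\xi_n\,|\,0)$. The same identity holds for the limit point. Hence in every case
\begin{equation*}
    \widetilde K(x,\xi_n\,|\,\lambda_n) = K_{\ST}((x,|x|),\eta_n) \to K_{\ST}((x,|x|),\eta) = \widetilde K(x,\xi\,|\,\lambda).
\end{equation*}
This is precisely convergence $(\lambda_n,\xi_n)\to(\lambda,\xi)$ in the pointwise topology on the disjoint union, so $\iota$ is continuous.

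The one point needing care is the uniform identity $f((x,|x|)) = \widetilde K(x,\xi\,|\,\lambda)$, which must hold across the case split $\lambda>0$ versus $\lambda=0$. Mixed sequences, with $\lambda_n>0$ and $\lambda=0$, are then handled automatically, because the rescaling $\lambda^{|x|}$ in $\widetilde K$ was defined exactly to match the evaluation at time $|x|$. I also need the identification of each space-time kernel $K_{\ST}(\cdot,\eta)$ with the corresponding $f_{\lambda,\xi}$, normalized at $(e,0)$; this follows from Theorem \ref{thm:minimal:harmonic:form} together with the normalization $f((e,0))=1$.
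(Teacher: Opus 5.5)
Your proof is correct and follows the paper's argument. The paper also identifies $K_{\ST}(\cdot,\eta)$ with $f_{\lambda,\xi}$ via Theorem \ref{thm:minimal:harmonic:form}, reads off $\lambda$ from the value at $(e,1)$ and the rescaled kernel from the values at $(x,|x|)$, and then passes to the limit along a sequence $\eta_n\to\eta$; your conclusion $\widetilde K(x,\xi_n\,|\,\lambda_n)\to\widetilde K(x,\xi\,|\,\lambda)$ is in fact a more precise statement of what the paper writes as ``$\xi_n\to\xi$''.
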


\begin{proof}
By Proposition \ref{p:all-bnry-harm}, we have that $K_{\ST}((x,m),\eta)$ is space-time harmonic for every $\eta \in \partial_{\ST} \Gamma$. Now, given $\eta \in \partial_{\ST}^m \Gamma$, we know that there exists $(\lambda,\xi) \in \sqcup_{\lambda\in [0, R]}\partial_{M, \lambda}^m\Gamma$ such that $K_{\ST}((x,m),\eta) = f_{\lambda,\xi}((x,m))$. In particular, whenever $K_{\ST}((e, 1),\eta) = \lambda > 0$, we have $K(x,\xi \, | \, \lambda) = \lambda^{-|x|}K_{\ST}((x, |x|),\eta)$, and when $ K_{\ST}((e, 1), \eta) = \lambda = 0$, we have $K(x, \xi \, | \, 0) = K_{\ST}((x, |x|),\eta)$.

Suppose we have a sequence $\eta_n \in \partial^{m}_{\ST} \Gamma$ converging to $\eta \in \partial^{m}_{\ST} \Gamma$, and denote by $(\xi_n,\lambda_n)$ the element corresponding to $\eta_n$ and by $(\xi,\lambda)$ the element corresponding to $\eta$. Then, using the above formulas, it is clear that $\lambda_n \rightarrow \lambda$, and $\xi_n \rightarrow \xi$.
\end{proof}

We now arrive at the main result of the section, showing that the minimal space-time Martin boundary is exactly the whole disjoint union of minimal $\lambda$-Martin boundaries.

\begin{theorem}
\label{t:minimalspacetime}

    Let $\Gamma$ be a countable discrete group, and let $\mu$ be an admissible finitely supported lazy probability measure on $\Gamma$. Then, the minimal space-time Martin boundary is homeomorphic to the disjoint union of the $\lambda$-minimal Martin boundaries for $\lambda\in [0, R]$ with its natural topology of point-wise convergence.
\end{theorem}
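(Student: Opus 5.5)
We already have the injective continuous map $\iota\colon \partial^{m}_{\ST}\Gamma \to \bigsqcup_{\lambda\in[0,R]}\partial^m_{M,\lambda}\Gamma$ from Theorem \ref{thm:minimal:harmonic:form} and Corollary \ref{prop:minimal:convergence}. The plan is to prove two further facts. First, $\iota$ is surjective. Second, its inverse is continuous for the pointwise topology on the right-hand side.

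For surjectivity, fix $\lambda\in(0,R]$ and $\xi\in\partial^m_{M,\lambda}\Gamma$, and set $f_{\lambda,\xi}((x,m))=\lambda^mK(x,\xi\,|\,\lambda)$. Since $\mu$ is finitely supported, $K(\cdot,\xi\,|\,\lambda)$ is $\lambda^{-1}$-harmonic, and a one-line computation shows $f_{\lambda,\xi}$ is space-time harmonic with $f_{\lambda,\xi}((e,0))=1$.

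To show $f_{\lambda,\xi}$ is minimal, let $g$ be non-negative space-time harmonic with $cg\le f_{\lambda,\xi}$. The shifted functions $g_n$ from the lemma above are again space-time harmonic. I would show $g$ factorizes as $g((x,m))=\lambda^m h(x)$, in the following steps.
\begin{enumerate}[(i)]
\item Harmonicity and the domination give $f_{\lambda,\xi}$-domination of every $g_n$.
\item The set of space-time harmonic $g$ dominated by a multiple of $f_{\lambda,\xi}$ is a face of the cone of non-negative space-time harmonic functions.
\item By the Martin--Poisson representation (Theorem \ref{thm:Martin-Poisson:sub-Markov}), $g$ is an integral of minimal space-time kernels, which by Theorem \ref{thm:minimal:harmonic:form} all have the form $f_{\lambda',\xi'}$.
\item Evaluating the domination at $(e,n)$ and letting $n\to\infty$ gives $g((e,n))\le C\lambda^n$. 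Since $f_{\lambda',\xi'}((e,n))=\lambda'^n$, the representing measure of $g$ must charge only $\lambda'\le\lambda$.
\item A symmetric lower argument is needed to exclude $\lambda'<\lambda$. This is the delicate part. Alternatively, run the same computation on $f_{\lambda,\xi}$ itself: $f_{\lambda,\xi}((e,n))=\lambda^n$ exactly and $\int \lambda'^n\,d\nu=\lambda^n$ for all $n$, which forces $\nu$ to be concentrated on $\{\lambda'=\lambda\}$.
\end{enumerate}
Once $\nu$ lives on $\{\lambda'=\lambda\}$, it corresponds to a representing measure of $K(\cdot,\xi\,|\,\lambda)$ on $\partial^m_{M,\lambda}\Gamma$. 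Uniqueness of that measure forces it to be $\delta_\xi$, so $f_{\lambda,\xi}$ is extremal and hence minimal.

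The case $\lambda=0$ is parallel, using $f_{0,\xi}=\mathbf 1_{m=|x|}K(x,\xi\,|\,0)$. Space-time harmonicity follows from $\infty$-harmonicity, because $\mu(x^{-1}y)>0$ forces $|y|\le|x|+1$. The moment argument at $(e,n)$ forces the representing measure onto $\lambda'=0$, and uniqueness in Theorem \ref{thm:Martin-Poisson:sub-Markov} for the sub-Markov chain $\bar\mu$ finishes the argument.

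Each minimal space-time harmonic function $f$ is then realized by a point of $\partial^m_{\ST}\Gamma$, by Theorem \ref{thm:Martin-Poisson:sub-Markov} applied to the space-time chain. Thus $\iota$ is a bijection.

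For continuity of $\iota^{-1}$, suppose $(\lambda_n,\xi_n)\to(\lambda,\xi)$, i.e.\ $\lambda_n\to\lambda$ and $\tilde K(x,\xi_n\,|\,\lambda_n)\to\tilde K(x,\xi\,|\,\lambda)$ for all $x$. I would check that $f_{\lambda_n,\xi_n}\to f_{\lambda,\xi}$ pointwise on $\ST$, using the identity $f_{\lambda,\xi}((x,m))=\lambda^{m-|x|}\tilde K(x,\xi\,|\,\lambda)$. This identity is valid for all $\lambda\in[0,R]$ with the convention $0^0=1$, since for $\lambda=0$ it reproduces $\mathbf 1_{m=|x|}$. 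The right-hand side is jointly continuous in $(\lambda,\tilde K)$ because $m\ge|x|$ on $\ST$. Corollary \ref{c:pointwiseconvergencespacetime} then turns this pointwise convergence into convergence in $\partial_{\ST}\Gamma$.

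Since Corollary \ref{prop:minimal:convergence} gives continuity of $\iota$ in the other direction, $\iota$ is a homeomorphism. I expect the main obstacle to be the minimality step, specifically controlling the representing measure of $f_{\lambda,\xi}$ on the full minimal space-time boundary via the moment identity $\int\lambda'^n\,d\nu=\lambda^n$.
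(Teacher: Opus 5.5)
Your proposal is correct and, once you take the ``alternatively'' route in step (v), it is essentially the paper's proof: you represent $f_{\lambda,\xi}$ by a measure $\nu$ on $\partial^m_{\ST}\Gamma\subset\bigsqcup_{\lambda}\partial^m_{M,\lambda}\Gamma$, use the moment identity $\int r^n\,d\pi_*\nu=\lambda^n$ to force $\pi_*\nu=\delta_\lambda$, and then use uniqueness of the representing measure of $K(\cdot,\xi\,|\,\lambda)$ on the minimal $\lambda$-boundary to get $\nu=\delta_\xi$, so the domination steps (i)--(iv) are unnecessary. Your explicit verification that $\iota^{-1}$ is continuous, via $f_{\lambda,\xi}((x,m))=\lambda^{m-|x|}\tilde K(x,\xi\,|\,\lambda)$, is a worthwhile addition: the paper only proves that $\iota$ is a continuous bijection and leaves continuity of the inverse implicit, even though the minimal boundary need not be compact in general.
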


\begin{proof}

    Using Theorem \ref{thm:minimal:harmonic:form}, we are left to show the map $\iota$ is surjective, i.e. that each element of the disjoint union is minimal space-time harmonic. In other words, we need to show that the space of minimal non-negative space-time harmonic functions is comprised exactly of the functions of the form
    \begin{equation*}
        f_{\lambda, \xi}((x, m)) = \lambda^{m}K(x, \xi \,|\, \lambda)
    \end{equation*}
    for $\lambda = f_{\lambda, \xi}((e, 1)) > 0$ and $\xi$ in $\partial_{M, \lambda}^m\Gamma$,
    and
    \begin{equation*}
        f_{0, \xi}((x, m)) = \mathbf{1}_{m = |x|}K(x, \xi \,|\, 0)
    \end{equation*}
    for $f_{0, \xi}((e,1)) = 0$ and $\xi\in \partial^{m}_{M, 0}\Gamma$.
    
    First, note that every function $f_{\lambda, \xi}$ is non-negative space-time harmonic for $\lambda\in[0, R]$. Therefore, there exists a probability measure $\nu$ with full mass on the minimal space-time Martin boundary identified as a subset of the disjoint union of $\lambda$-Martin boundaries such that
    \begin{equation}
    \label{eq:space-time:harmonic:integral}
        f_{\lambda, \xi}((x, m)) = \int_{\bigsqcup_{\lambda\in [0, R]}\partial^{m}_{M, \lambda}\Gamma} f_{r, \zeta}((x, m))d\nu(r, \zeta).
    \end{equation}
    
    Notice that by Corollary \ref{prop:minimal:convergence}, the canonical projection from the minimal space-time Martin boundary to $[0, R]$ defined by
    \begin{equation*}
        \pi\colon \partial^{m}_{\ST}\Gamma\to [0, R], (\lambda, \xi)\mapsto \lambda
    \end{equation*}
    is continuous. Hence, given a probability measure $\nu$ on the minimal space-time Martin boundary, its push-forward $\pi_*\nu$ is a probability measure on $[0, R]$ endowed with the standard Borel $\sigma$-algebra. By definition, the measure $\pi_*\nu$ satisfies 
    \begin{equation}
    \label{e:integralpushforward}
        \int_0^R fd\pi_*\nu = \int_{\bigsqcup_{\lambda\in [0, R]}\partial^{m}_{M, \lambda}\Gamma}f\circ \pi d\nu
    \end{equation}
    for all bounded continuous functions $f$ on $[0, R]$. 
    
    Now, evaluating equation \eqref{eq:space-time:harmonic:integral} at $(e, m)$ and applying \eqref{e:integralpushforward} to the function $f\colon r \mapsto r^m$, we get
    \begin{equation*}
        \lambda^m = \int_{\bigsqcup_{\lambda\in [0, R]}\partial^{m}_{M, \lambda}\Gamma} r^md\nu(r, \zeta) = \int_{0}^{R}r^md\pi_*\nu(r),
    \end{equation*}
    whether $\lambda = 0$ or not. Since a probability measure on $[0, R]$ is determined by its moments (see, e.g. \cite[Theorem~30.1]{Bill86}), we deduce that $\pi_*\nu = \delta_{\lambda}$. This implies that $\nu$ gives full measure to the fiber $\partial_{M, \lambda}^m\Gamma$ and so can be considered as a probability measure on $\partial_{M, \lambda}^m\Gamma$. On the other hand, evaluating equation \eqref{eq:space-time:harmonic:integral} at $(x,|x|)$, we obtain
    \begin{equation*}
        K(x, \xi \,|\, \lambda) = \int_{\partial^{m}_{M, \lambda}\Gamma} K(x, \zeta \,|\, \lambda)d\nu(\zeta).
    \end{equation*}
    Since $\xi$ is a point in the minimal $\lambda$-Martin boundary, we necessarily get that $\nu = \delta_\xi$. Thus, we proved that any $f_{\lambda, \xi}$ cannot be written as a non-trivial integral on the space-time Martin boundary, so it must therefore be a minimal space-time Martin function.
\end{proof}

\begin{remark}
   In view of this proposition, it is natural to ask if the same type of result hold for the whole space-time Martin boundary. It turns out that the space-time Martin boundary often does \textbf{not} coincide with the disjoint union of $\lambda$-Martin boundaries. Indeed, due to Theorem \ref{t:ratiolimitspacetime}, we know that the whole ratio-limit compactification embeds in the space-time Martin boundary for symmetric random walks on torsion-free hyperbolic groups. More precisely, for any finitely supported symmetric random walk on a hyperbolic group $\Gamma$ we have that $R_{\mu}$ is contained in the amenable radical of $\Gamma$. Thus, we see that the top cap of the space-time boundary contains a copy of the non-amenable group $\Gamma/R_\mu$ which is not part of the disjoint union of $\lambda$-Martin boundaries.
\end{remark}

\section{C*-envelopes of tensor algebras arising from random walks}
\label{sec:tensor_algebras}

In this section, we aim to extend the results of the first-named author with Markiewicz on $C^{\ast}$-envelopes of tensor algebras for finite stochastic matrices in \cite{DOMA17} to the context of random walks on countably infinite groups. We will apply our description of the minimal space-time Martin boundary from Theorem \ref{t:minimalspacetime}. First, we discuss some preliminary observations about boundary representations of the Toeplitz algebra $\mathcal{T}(\Gamma, \mu)$ with respect to the tensor algebra $\mathcal{T}^+(\Gamma,\mu)$.

For $z\in\Gamma$, we let $\pi_{z}$ denote the representation
\begin{equation}
    \begin{split}
        \pi_{z}\colon \mathcal{T}(\Gamma, \mu) & \to \mathbb{B}(\mathcal{H}_{z}(\Gamma, \mu)), \\
        T & \mapsto T|_{\mathcal{H}_{z}}
    \end{split}
\end{equation}
of the Toeplitz algebra $\mathcal{T}(\Gamma, \mu)$. Our goal is to show that these representations are boundary for the tensor algebra $\mathcal{T}^+(\Gamma, \mu)$ and use this to identify the $C^{\ast}$-envelope of $\mathcal{T}^+(\Gamma, \mu)$ as the Toeplitz algebra $\mathcal{T}(\Gamma,\mu)$. Recall that we denote 
\begin{equation*}
    \mathcal{H}^{(m)} := \bigoplus_{z\in\Gamma}\ell^{2}(\ST^{(m)}_{z}),
\end{equation*}
where $\ST^{(m)}_{z} := \{y\in\Gamma \,|\, P^{m}(y, z) > 0\}$.

\begin{proposition}

    Let $\Gamma$ be a countable discrete group, and $\mu$ be an admissible finitely supported probability measure on $\Gamma$. Then, $\pi_{z}$ is an irreducible representation for each $z\in\Gamma$.
\end{proposition}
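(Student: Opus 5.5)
The plan is to show that the commutant of $\pi_z(\mathcal{T}(\Gamma,\mu))$ in $\mathbb{B}(\mathcal{H}_z)$ consists of scalars. Equivalently, we show that every nonzero basis vector $e^{(m)}_{x,z}$ is cyclic and that the algebra contains enough rank-one operators. The natural route is to prove that $\pi_z(\mathcal{T}(\Gamma,\mu))$ contains the compact operators $\mathbb{K}(\mathcal{H}_z)$. An irreducible action of $\mathbb{K}(\mathcal{H}_z)$ on $\mathcal{H}_z$ then gives irreducibility of $\pi_z$.

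\textbf{Step 1: the vacuum vector $e^{(0)}_{z,z}$.} Note that $\ST^{(0)}_z=\{z\}$, so $\mathcal{H}_z^{(0)}$ is one-dimensional. The vacuum is the only basis vector of $\mathcal{H}_z$ killed by all $(S^{(1)}_{x,y})^*$. Indeed, for $(x',m+1)\in\ST_z$ there is some $y$ with $P(x',y)P^m(y,z)>0$, and then the adjoint formula shows $(S^{(1)}_{x',y})^*e^{(m+1)}_{x',z}\neq 0$. Since $\mu$ is finitely supported, the sum $\sum_{x,y:P(x,y)>0} S^{(1)}_{x,y}(S^{(1)}_{x,y})^*$ converges strictly; this is where finite range enters, as each $x$ has finitely many $y$. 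I would instead use that $\mathcal{I}(\Gamma,\mu)\subseteq\mathcal{T}(\Gamma,\mu)$ by \cite[Proposition 4.4]{DOAD21}. Compressing this ideal to $\mathcal{H}_z$, the summand $\mathbb{K}(\mathcal{H}_z)$ lies in $\pi_z(\mathcal{T}(\Gamma,\mu))$. The point that makes $\pi_z$ well defined is that each $S^{(n)}_{x,y}$ maps $\mathcal{H}_z$ into itself, so $\mathcal{H}_z$ is reducing and $\pi_z$ is a genuine $*$-representation.

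\textbf{Step 2: direct cyclicity argument.} If one prefers not to lean on \cite{DOAD21}, one can argue directly that the vacuum is cyclic and that the rank-one projection onto it lies in the algebra. For cyclicity, note that for $(x,n)\in\ST_z$ we have $P^n(x,z)>0$. Then $S^{(n)}_{x,z}e^{(0)}_{z,z}=\sqrt{\tfrac{P^n(x,z)\cdot 1}{P^n(x,z)}}\,e^{(n)}_{x,z}=e^{(n)}_{x,z}$, so the vacuum reaches every basis vector. Conversely, applying $(S^{(n)}_{x,z})^*$ to $e^{(n)}_{x,z}$ returns the vacuum, which shows that any nonzero invariant subspace containing some basis vector contains the vacuum.

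\textbf{Step 3: conclusion.} Let $Q$ be a projection in the commutant. The vacuum space is the joint kernel of the operators $(S^{(1)}_{x,y})^*$, which $Q$ preserves, and it is one-dimensional. Hence the vacuum is an eigenvector of $Q$, with eigenvalue $0$ or $1$. By cyclicity, $Q$ is then $0$ or $I$.

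The main obstacle is verifying the joint kernel claim. For $m\ge1$ and $(x',m)\in\ST_z$, one must check that some $(S^{(1)}_{x',y})^*e^{(m)}_{x',z}$ is nonzero, and that this forces the joint kernel to be spanned by $e^{(0)}_{z,z}$ even for non-basis vectors. The latter holds because $(S^{(1)}_{x',y})^*$ maps the orthogonal blocks $\{e^{(m)}_{x',z}\}_y$ into distinct coordinates $e^{(m-1)}_{y,z}$ with nonzero weights. One needs a small orthogonality bookkeeping argument across different $x'$ with the same $y$. Summing $\sum_y \|(S^{(1)}_{x',y})^*v\|^2$ over $x'$ and taking an inner product gives $\langle Dv,v\rangle$, where $D$ is diagonal with positive entries off the vacuum. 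This settles it.
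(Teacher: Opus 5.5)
Your proposal is correct, and it contains two proofs. Step 1 is exactly the paper's argument: the paper invokes \cite[Proposition 4.4]{DOAD21} to get $\mathbb{K}(\mathcal{H}_z)\subseteq\pi_z(\mathcal{T}(\Gamma,\mu))$ and concludes irreducibility from that. Steps 2--3 give a genuinely different, self-contained route, and it is sound. The vacuum $e^{(0)}_{z,z}$ is cyclic because $S^{(n)}_{x,z}e^{(0)}_{z,z}=e^{(n)}_{x,z}$. The joint kernel of the operators $(S^{(1)}_{x,y})^*|_{\mathcal{H}_z}$ is one-dimensional. Together these show the commutant has no nontrivial projections.

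The joint-kernel step is easier than you suggest. Because of the factor $\delta_{x,x'}$, the operator $(S^{(1)}_{x,y})^*$ only sees the $x$-row. For fixed $x$, distinct levels $m$ land on distinct basis vectors $e^{(m-1)}_{y,z}$, so no cross-row bookkeeping is needed. Chapman--Kolmogorov even gives $\sum_{x,y}\|(S^{(1)}_{x,y})^*v\|^2=\|v\|^2-|\langle v,e^{(0)}_{z,z}\rangle|^2$, so your $D$ is exactly $I$ minus the vacuum projection.

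Your route uses neither finite support nor admissibility. It only needs $\|S^{(n)}_{x,y}\|\le 1$, which holds because all weights are at most $1$. So it proves irreducibility in greater generality.

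The paper's route does need finite support, but not for the strict convergence in your aside. That convergence holds for any increasing, bounded family of positive operators. Finite support is used to make
\[
Q^{(0)}_x=S^{(0)}_{x,x}-\sum_y S^{(1)}_{x,y}(S^{(1)}_{x,y})^*
\]
a finite sum lying in $\mathcal{T}(\Gamma,\mu)$. In exchange, the paper obtains the stronger fact that the compact operators sit inside the Toeplitz algebra. The paper relies on this afterwards: for the extension of $\mathcal{O}(\Gamma,\mu)$ by $\mathcal{I}(\Gamma,\mu)$, and for Arveson's peaking criterion applied to irreducible representations supported on the compacts.
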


\begin{proof}

    By \cite[Proposition 4.4]{DOAD21} we get for any $z\in\Gamma$ that $\K(\mathcal{H}_{z})\lhd\pi_{z}(\mathcal{T}(\Gamma, \mu))$. Since the image of $\pi_{z}$ contains a copy of compact operators, it follows that $\pi_{z}$ is an irreducible representation.
\end{proof}

In what follows, for $\ell \in\N$ we denote by $Q^{(\ell)}$ the orthogonal projection from $\mathcal{H}$ onto $\mathcal{H}^{(\ell)}$, and for $x\in\Gamma$ we define the projection $Q^{(\ell)}_{x} := S^{(0)}_{x, x}Q^{(n)} = Q^{(\ell)}S^{(0)}_{x, x}$ onto the standard basis vectors $\{ \, e^{(\ell)}_{x,z} \, | \, z\in \Gamma, \, P^{\ell}(x,z)>0 \, \}$. By \cite[Proposition 4.4]{DOAD21} we know that $Q^{(\ell)}_x \in \I(\Gamma,\mu)$ for all $\ell \in \N$ and $x\in \Gamma$.

\begin{proposition}

    Let $\Gamma$ be a countable discrete group, and $\mu$ be a admissible finitely supported probability measure on $\Gamma$. Then, the representations $\pi_{z}$ and $\pi_{z'}$ are not unitarily equivalent for distinct $z, z'\in\Gamma$.
\end{proposition}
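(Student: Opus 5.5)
To show that $\pi_z$ and $\pi_{z'}$ are inequivalent for $z\neq z'$, I would exhibit an element of $\mathcal{T}(\Gamma,\mu)$ that is annihilated by one representation but not by the other. Unitary equivalence $\pi_{z'} = U\pi_z(\cdot)U^*$ preserves kernels, so $\ker\pi_z = \ker\pi_{z'}$ would follow. It therefore suffices to find $T\in\mathcal{T}(\Gamma,\mu)$ with $\pi_z(T)\neq 0$ and $\pi_{z'}(T)=0$.

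The natural candidates are the projections $Q^{(\ell)}_x\in\mathcal{I}(\Gamma,\mu)\subseteq\mathcal{T}(\Gamma,\mu)$ recalled just before the statement. I will take $\ell=0$ and $x=z$. The range of $Q^{(0)}_z$ is spanned by the basis vectors $e^{(0)}_{z,w}$ with $P^0(z,w)>0$, that is, with $w=z$. So $Q^{(0)}_z$ is the rank-one projection onto $\mathbb{C}e^{(0)}_{z,z}$. This vector lies in the summand $\mathcal{H}_z$, since $(z,0)\in\ST_z$.

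Consequently $\pi_z(Q^{(0)}_z)$ is the nonzero rank-one projection onto $e^{(0)}_{z,z}$. On the other hand, $\pi_{z'}(Q^{(0)}_z) = Q^{(0)}_z|_{\mathcal{H}_{z'}} = 0$, because $e^{(0)}_{z,z}$ is orthogonal to $\mathcal{H}_{z'}$ when $z'\neq z$. Hence $\ker\pi_{z'}\neq\ker\pi_z$, and the representations are not unitarily equivalent.

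The only point needing care is the identification of $Q^{(0)}_z$ as an element of $\mathcal{T}(\Gamma,\mu)$. The operator $S^{(0)}_{z,z}$ is a generator of $\mathcal{T}(\Gamma,\mu)$: it maps $e^{(m)}_{y',w}\mapsto \delta_{z,y'}e^{(m)}_{z,w}$, so it is the projection onto the span of the basis vectors with first index $z$. Composing it with $Q^{(0)}$ indeed gives the projection onto $e^{(0)}_{z,z}$. Membership of $Q^{(0)}_z$ in $\mathcal{T}(\Gamma,\mu)$ is already guaranteed by \cite[Proposition 4.4]{DOAD21}, as quoted in the paper. I do not expect any genuine obstacle here; the argument is essentially a bookkeeping check of supports.
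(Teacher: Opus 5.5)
Your proof is correct and is essentially the paper's argument. The paper likewise rests on $Q^{(0)}_z\in\mathcal{T}(\Gamma,\mu)$ and then defers to \cite[Proposition 3.4]{DOMA17}, which amounts to your separation $\pi_z(Q^{(0)}_z)\neq 0=\pi_{z'}(Q^{(0)}_z)$. If you want to avoid citing \cite[Proposition 4.4]{DOAD21}, you can verify membership directly via $Q^{(0)}_z=S^{(0)}_{z,z}-\sum_{y\,:\,P(z,y)>0}S^{(1)}_{z,y}(S^{(1)}_{z,y})^{*}$, which is a finite sum because $\mu$ is finitely supported.
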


\begin{proof}
Since $Q^{(0)}_{z}\in\mathcal{T}(\Gamma, \mu)$, the rest follows verbatim as in \cite[Proposition 3.4]{DOMA17}, where the assumption of finiteness of $P$ becomes redundant.
\end{proof}

For $n, m\in\N$ and $x, y$ such that $P^{n}(x, y) > 0$, denote the operators
\begin{equation}
\label{eq:reduced_shifts}
    \begin{split}
        T^{(n)}_{x, y}\colon \mathcal{H}(\Gamma, \mu) & \to \mathcal{H}(\Gamma, \mu), \\
        e^{(m)}_{y', z} & \mapsto \delta_{y, y'}\sqrt{\dfrac{P^{m}(y, z)}{P^{n + m}(x, z)}}e^{(n + m)}_{x, z},
    \end{split}
\end{equation}
where the adjoints are given by
\begin{equation*}
    \begin{split}
        (T^{(n)}_{x, y})^{\ast}(e^{(n + m)}_{x', z}) = \delta_{x, x'}\sqrt{\dfrac{P^{m}(y, z)}{P^{n + m}(x, z)}}e^{(m)}_{y, z}.
    \end{split}
\end{equation*}
Since $T^{(n)}_{x, y} = \dfrac{1}{\sqrt{P^{n}(x, y)}}S^{(n)}_{x, y}$, each $T^{(n)}_{x, y}\in\mathcal{T}(\Gamma, \mu)$ and $T^{(n)}_{x, y}(\mathcal{H}^{(m)}_{z})\subset \mathcal{H}^{(n + m)}_{z}$.

\begin{lemma}
\label{lemma:space_time:inequality}
Let $\Gamma$ be a countable discrete group, and $\mu$ be an admissible finitely supported lazy probability measure on $\Gamma$. Then, there exists $n\in \mathbb{N}$ such that the following strict inequality holds
    \begin{equation*}
        \sup_{(z, m)\in\ST }K_{\ST}((e, n), (z, m)) < \dfrac{1}{P^n(e, e)}.  
    \end{equation*}
\end{lemma}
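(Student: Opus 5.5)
The natural first move is to rewrite the quantity. By definition of the space-time Martin kernel, for $(z,m)\in\ST$ with $m\ge n$ we have $K_{\ST}((e,n),(z,m)) = P^{m-n}(e,z)/P^{m}(e,z)$, and it vanishes for $m<n$. Decomposing according to the position of the walk at time $n$ gives
\begin{equation*}
P^{m}(e,z) = P^{n}(e,e)\,P^{m-n}(e,z) + \sum_{w\neq e}P^{n}(e,w)\,P^{m-n}(w,z).
\end{equation*}
This already yields the non-strict bound $K_{\ST}((e,n),(z,m))\le 1/P^{n}(e,e)$, which is inequality \eqref{eq:space-time:harmonic}. So the lemma amounts to finding $n$ and $c>0$ such that
\begin{equation*}
\sum_{w\neq e}P^{n}(e,w)\,P^{m-n}(w,z)\;\ge\; c\,P^{m-n}(e,z)
\end{equation*}
uniformly over $(z,m)\in\ST$ with $m\ge n$. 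We tacitly assume $\Gamma$ is nontrivial. For the trivial group the supremum equals $1/P^{n}(e,e)=1$, so the statement is vacuous there.

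To get this gain, I would use path surgery built from admissibility and laziness. Fix $s\in\supp(\mu)\setminus\{e\}$. Since the walk is admissible, choose $k$ with $P^{k}(s,e)>0$, so there is a loop $e\to s\to\cdots\to e$ of length $k+1$. Now take any path $\gamma$ of length $m-n$ from $e$ to $z$. From it we can produce paths of length $m$ in two ways: prepend $n$ lazy steps, or insert the loop, padded by lazy steps, at a position where the walk is not at $e$ at time $n$. For the second family to be disjoint from the $w=e$ term, the inserted detour must be placed so that the resulting path is at some $w\neq e$ at time $n$. The cleanest version I would try is to place the loop after the first $n-(k+1)$ steps of $\gamma$. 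Each $\gamma$ then produces a new path, and the map $\gamma\mapsto$ new path is at most $C$-to-one, with $C$ depending only on $n$ and $|\supp\mu|$ by finite support. This would give the second sum at least $c\,P^{m-n}(e,z)$ with $c=\mu(s)P^{k}(s,e)\mu(e)^{n-k-1}/C$.

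The main obstacle is exactly this disjointness and uniformity. Paths through $e$ at time $n$ could absorb the modified paths, and near the geodesic regime ($m-n$ close to $|z|_\mu$) Harnack-type comparisons $P^{m-n}(w,z)\gtrsim P^{m-n}(e,z)$ genuinely fail. If the direct surgery does not give disjointness, I would split into two regimes. In the near-geodesic regime, $m\le |z|_\mu+n$, I would count lazy-step insertions: $P^{m}(e,z)\ge \binom{m}{n}\mu(e)^{n}\,\cdot$(paths of length $m-n$) up to bounded multiplicity, so the ratio is at most roughly $\mu(e)^{-n}\binom{m}{n}^{-1}$. This is far below $1/P^{n}(e,e)$ once $|z|_\mu\ge 1$. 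The finitely many remaining small cases ($z=e$, small $m$) I would check by hand using $n\ge 2$, which rules out the equality case $K_{\ST}((e,1),(e,1))=1/\mu(e)$ that shows $n=1$ can fail. In the bulk regime, I would run the recursion on $r_t=P^{t}(e,z)/P^{t-1}(e,z)$ coming from the same decomposition and show that $r_t$ stays uniformly above its trivial lower bound $\mu(e)$. Then I would choose $n$ so that the product of $n$ such ratios exceeds $P^{n}(e,e)$ by a fixed factor.
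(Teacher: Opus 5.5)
There is a genuine gap, and it starts with the statement itself. By laziness, $(e,n)\in\ST$ for every $n$, and $K_{\ST}((e,n),(e,n)) = P^{0}(e,e)/P^{n}(e,e) = 1/P^{n}(e,e)$. So the equality you found at $n=1$ is not special to $n=1$. For every $n$ the supremum is at least $1/P^n(e,e)$, and passing to $n\ge 2$ does not remove it. For the same reason the trivial group is a counterexample, not a vacuous case. Your target inequality $\sum_{w\neq e}P^n(e,w)P^{m-n}(w,z)\ge c\,P^{m-n}(e,z)$ also fails at $m=n$, $z=e$, where the left side is $0$. The paper's proof contains the same slip: it asserts that the value at $(e,n)$ is $1$. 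What is true, and all that Proposition \ref{prop:repr:uep} uses, is the bound for the supremum over $(z,m)\in\ST$ with $m>n$. For $\pi_z$ with $z\neq e$ only $m>n$ occurs, and for $q(T)$ the level $m=n$ is removed by subtracting $Q^{(n)}_eTT^{\ast}$. That is the version you should prove.

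For $m>n$ your plan still does not deliver uniformity at infinity, which is the real content.

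\textbf{Surgery.} If you insert a fixed loop at a fixed time, the position of the modified path at time $n$ is dictated by $\gamma$. For instance, it is the base point $\gamma_{n-k-1}$ if the padding follows the loop, and that can be $e$. The placement is also undefined when $m-n<n-k-1$.

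\textbf{Bulk regime.} The claim that $r_t=P^t(e,z)/P^{t-1}(e,z)$ stays uniformly above $\mu(e)$ is false in general. Take $\mu$ on $\mathbb{Z}$ supported on $\{0,1,-2\}$. Then $P^2(0,0)=\mu(0)^2$, so $r_2=\mu(0)$ at $z=0$, and $(0,n+1)$ lies in your bulk regime for every $n$. Moreover, even a valid bound $r_t\ge\mu(e)+\delta$ only yields $(\mu(e)+\delta)^n$. Meanwhile $P^n(e,e)^{1/n}\to\rho$, and $r_t\to\rho$ for fixed $z$. So for large $n$ you would need factors close to $\rho$, that is, in effect $\liminf r_t\ge\rho$ as $(z,t)\to\infty$ in $\ST$.

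\textbf{Near-geodesic regime.} The bound $\mu(e)^{-n}\binom{|z|+n}{n}^{-1}$ beats $1/P^n(e,e)$ only once $\binom{|z|+n}{n}>P^n(e,e)/\mu(e)^n$, not once $|z|\ge 1$. This part is repairable by a finite pointwise check.

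\textbf{How the paper gets uniformity.} It uses compactness. The function $K_{\ST}((e,n),\cdot)$ extends continuously to $\Delta_{\ST}\Gamma$, and only finitely many points of $\ST$ have $m\le n$. Hence the supremum over $m>n$ is attained in one of two places. Either it is attained at a point of $\ST$ with $m>n$, where strictness is a pointwise path argument for $n$ large. Or it is attained at some $\gamma\in\partial_{\ST}\Gamma$. There the Martin--Poisson representation over $\partial^m_{\ST}\Gamma$ and Theorem \ref{thm:minimal:harmonic:form} give $K_{\ST}((e,n),\gamma)=\int\lambda^n\,d\nu\le R^n$, which is $<1/P^n(e,e)$ for $n$ large. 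That boundary estimate is exactly what your elementary route would have to reproduce.
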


\begin{proof}

    We consider the space-time set $\ST$ as a dense subset of the space-time compactification $\Delta_{\ST}\Gamma$. Since $\Delta_{\ST}\Gamma$ is compact, and the space-time Martin function $(z,m) \mapsto K_{\ST}((e, n), (z, m))$ extends continuously to $\Delta_{\ST}\Gamma$, we know that that the value of the supremum in the lemma can be attained either on $\ST$ or on the space-time boundary $\partial_{\ST}\Gamma$. 

    First, suppose that the supremum is attained at a point in the boundary $\gamma\in\partial_{\ST}\Gamma$ for arbitrarily large $n\in \mathbb{N}$. Since the space-time Markov chain has finite range, by a combination of Proposition \ref{p:all-bnry-harm} and Theorem \ref{thm:Martin-Poisson:sub-Markov} for transient accessible Markov chains, it follows that
    \begin{equation*}
        K_{\ST}((e, n), \gamma) = \int_{\partial^{m}_{\ST}\Gamma}K_{\ST}((e, n), \zeta)d\nu(\zeta)
        \leq \sup_{\eta\in\partial^{m}_{\ST}\Gamma}K_{\ST}((e, n), \eta)
    \end{equation*}
    for a probability measure $\nu$ on $\partial^{m}_{\ST}\Gamma$. By the identification of the minimal space-time boundary in Theorem \ref{t:minimalspacetime}, we get
    \begin{equation*}
        K_{\ST}((e, n), \gamma) \leq \sup_{(\lambda, \xi)\in\sqcup_{\lambda\in[0, R]}\partial^{m}_{M, \lambda}\Gamma} f_{\lambda,\xi}((e, n)).
    \end{equation*}
    Thus, either $\lambda = 0$ and $\xi\in\partial^{m}_{M, 0}\Gamma$, implying that
    \begin{equation*}
        f_{0, \xi}((e, n)) =  \chi_{|e| = n}K_{0}(e, \xi) = 0 < \frac{1}{P^n(e,e)},
    \end{equation*}
    or $\lambda\in(0,R]$ and $\xi\in\partial^{m}_{M, \lambda}\Gamma$, in which case
    \begin{equation*}
        f_{\lambda ,\xi}((e, n)) = \lambda^n K_{\lambda}(e, \xi) = \lambda^n \leq R^n,
    \end{equation*}
    This reduces our inequality to showing that
    \begin{equation*}
        R^{n}P^{n}(e, e) < 1,
    \end{equation*}
    which follows for sufficiently large $n\in\N$ by \cite[Theorem 7.8]{Woessbook}.

    Second, suppose that for all but finitely many $n\in \mathbb{N}$, the suprema for the functions $(z,m)\mapsto K_{\ST}((e,n),(z,m))$ are attained on $\ST$. In this case, for the value $K_{\ST}((e, n), (z_{0}, m_{0}))$ to be positive, it must be the case that $m_0 \geq n$. If $m_0 = n$ we necessarily have that $z_0 = e$, so that $K_{\ST}((e,n),(z_0,m_0)) = 1 < \frac{1}{P^n(e,e)}$. 
    
    Thus, we may assume that $m_0 > n$. In this case, we necessarily have that $K_{\ST}((e, n), (z_0, m_0)) = P^{m_{0} - n}(e, z_{0})/P^{m_{0}}(e, z_{0})$. In particular, the numerator is positive. Due to the fact that the generating measure $\mu$ is admissible and lazy, we know that $0<P^n(e,e)<1$. In this case, we observe that we necessarily have
    \begin{equation*}
        P^n(e, e)P^{m_{0}-n}(e, z_{0}) < P^{m_{0}}(e, z_{0}),
    \end{equation*}
    so that
    \begin{equation*}
        K_{\ST}((e, n), (z_{0}, m_{0})) = \dfrac{P^{m_{0} - n}(e, z_{0})}{P^{m_{0}}(e, z_{0})} < \dfrac{1}{P^n(e, e)}.
    \end{equation*}

    Thus, we may choose any non-zero $n \in \mathbb{N}$ for which the supremum for the function $(z,m)\mapsto K_{\ST}((e,n),(z,m))$ is attained on $\ST$, and we get our desired strict inequality.
\end{proof}

In what follows, we will employ the natural $\Gamma$ action on $\mathcal{T}(\Gamma,\mu)$. Observe that for every $g\in \Gamma$ the map $\ST_{z}\to\ST_{g^{-1}z}$ given by $(y, m) \mapsto (g^{-1}y, m)$ induces unitary operators $U_{g,z}\colon \ell^2(\ST_z) \rightarrow \ell^2(\ST_{g^{-1}z})$ given by $e^{(m)}_{x, z} \mapsto e^{(m)}_{g^{-1}x, g^{-1}z}$, which then yields a unitary
\begin{equation*}
    U_g := \SOT-\sum_{z\in \Gamma} U_{g, z} \colon \mathcal{H}(\Gamma, \mu) \to \mathcal{H}(\Gamma, \mu).
\end{equation*}
Since $U_g(S^{(n)}_{x, y})U_g^{-1} = S^{(n)}_{g^{-1}x, g^{-1}y}$, we obtain an action $\Ad_{U}\colon \Gamma\curvearrowright \mathcal{T}(\Gamma,\mu)$ via $\ast$-automorphisms given by $(\Ad_{U})_g := \Ad_{U_g}$ where $\Ad_{U_g}(T) = U_{g}TU_{g^{-1}}$. It is clear that $\Ad_U$ leaves $\mathcal{T}^+(\Gamma, \mu)$ invariant, and the description of $\I(\Gamma,\mu)$ as the ideal generated by $\{Q_z^{(0)}\}_{z\in \Gamma}$ in \cite[Proposition 4.4]{DOAD21} shows that $\Ad_U$ also leaves $\I(\Gamma,\mu) =\bigoplus_{z\in \Gamma} \mathbb{K}(\H_z)$ invariant. The action $\Ad_U$ induces an action $\widehat{\Ad_{U}}\colon \Gamma \curvearrowright \sp({\mathcal{T}(\Gamma, \mu)})$ on the irreducible representation spectrum of $\mathcal{T}(\Gamma, \mu)$ defined for every $g\in\Gamma$ by $\widehat{\Ad_{U_g}}(\rho) = \rho\circ\Ad_{U_g}$. It turns out that this induced action permutes the irreducible representations $\{ \pi_z\}_{z\in \Gamma}$ in the sense that for every $g\in \Gamma$ we have that $\pi_{z}\circ\Ad_{U_g}$ is unitarily equivalent to $\pi_{gz}$ via the unitary $U_{g^{-1},z}$. Indeed, for $n\in\N$ and $x,y \in \Gamma$ with $P^n(x,y)>0$ we have that,
\begin{equation*}
    U_{g^{-1},z}[\pi_{z}\circ\Ad_{U_g}(S^{(n)}_{x, y})] U_{g^{-1},z}^{-1} = U_{g^{-1},z} [S^{(n)}_{g^{-1}x, g^{-1}y}|_{\mathcal{H}_{z}}]U_{g^{-1},z}^{-1} = S^{(n)}_{x, y}|_{\mathcal{H}_{gz}} = \pi_{gz}(S^{(n)}_{x, y}).
\end{equation*}

We now present an extension of \cite[Proposition 3.11]{DOMA17} for finite range symmetric random walks on discrete groups.

\begin{proposition}
\label{prop:repr:uep}
Let $\Gamma$ be a countable discrete group, and $\mu$ be an admissible finitely supported lazy probability measure on $\Gamma$. Then, $\pi_x$ is a boundary representation for each $x\in\Gamma$.
\end{proposition}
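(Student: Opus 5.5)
By the $\Gamma$-action $\Ad_U$ computed above, $\pi_z\circ\Ad_{U_g}\simeq\pi_{gz}$. Since $\Ad_{U_g}$ is a $*$-automorphism of $\mathcal{T}(\Gamma,\mu)$ that leaves $\mathcal{T}^+(\Gamma,\mu)$ invariant, it preserves boundary representations. It therefore suffices to show that $\pi_e$ is a boundary representation.

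Since $\pi_e$ is irreducible and $\pi_e(\mathcal{T}(\Gamma,\mu))\supseteq\K(\mathcal{H}_e)$, \cite[Theorem 7.2]{ARWI11} applies. Hence it suffices to produce a matrix (in fact a single element) $T\in\mathcal{T}^+(\Gamma,\mu)$ with
\[
\|\pi_e(T)\|>\sup_{\rho\nsim\pi_e}\|\rho(T)\|.
\]

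The natural candidate, following \cite[Proposition 3.11]{DOMA17}, is built from the reduced shifts. Fix $n$ as in Lemma~\ref{lemma:space_time:inequality} and take
\[
T=\sum_{y}S^{(n)}_{e,y}=\sum_y\sqrt{P^n(e,y)}\,T^{(n)}_{e,y},
\]
or more simply $T=T^{(n)}_{e,e}$ composed with the projection onto the $e$-column. First I compute $\pi_e(T)$: it sends $e^{(0)}_{e,e}$ to a vector of norm $1/\sqrt{P^n(e,e)}$ (up to normalisation), so $\|\pi_e(T)\|^2\ge 1/P^n(e,e)$.

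Next, for a general basis vector $e^{(m)}_{y,z}$, the quantity $\|Te^{(m)}_{y,z}\|^2$ is a ratio of the form
\[
\frac{P^m(e,z)}{P^{n+m}(e,z)},
\]
which (after the symmetry and translation $z\mapsto$ space-time point) is exactly $K_{\ST}((e,n),(z,n+m))$. Because $T$ maps distinct basis vectors to orthogonal vectors, its norm on any invariant piece is the supremum of these values.

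The irreducible representations $\rho\nsim\pi_e$ then fall into two groups:
\begin{enumerate}[(i)]
\item $\rho=\pi_z$ with $z\neq e$. Here $\|\pi_z(T)\|^2=\sup_m K_{\ST}$-type ratios at space-time points not equal to $(e,n)$. Lemma~\ref{lemma:space_time:inequality} (or its proof) bounds these strictly below $1/P^n(e,e)$, uniformly in $z$.
\item $\rho$ annihilating $\I(\Gamma,\mu)$. Then $\|\rho(T)\|\le\|q(T)\|$ in $\mathcal{O}(\Gamma,\mu)$, and the norm of $q(T)$ is the essential norm of $T$. That essential norm is the limsup of the ratios as $(z,m)\to\infty$ in $\ST$, i.e.\ at most $\sup_{\gamma\in\partial_{\ST}\Gamma}K_{\ST}((e,n),\gamma)$. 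This is bounded via the Martin–Poisson representation and Theorem~\ref{t:minimalspacetime} by $\max(R^n,0)<1/P^n(e,e)$, as in the proof of Lemma~\ref{lemma:space_time:inequality}.
\end{enumerate}
Combining the two gives the strict peaking inequality.

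The main obstacle is making the norm identifications rigorous. This has two parts. The first is checking that the column structure really makes $T^*T$ diagonal with entries equal to space-time Martin kernels; this requires matching the orientation of $\ST_z$ with $\ST$, and may need symmetry or an appropriate choice of summation. The second is showing that the supremum over all $z\ne e$ together with the Calkin-type quotient is strictly less than the value at $\pi_e$, rather than merely $\le$. For this I would rely on compactness of $\Delta_{\ST}\Gamma$: the supremum in Lemma~\ref{lemma:space_time:inequality} is attained either at an interior point, where the strict inequality $P^n(e,e)P^{m-n}(e,z)<P^m(e,z)$ holds, or at a boundary point, where the bound $R^nP^n(e,e)<1$ holds. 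Either way the supremum stays strictly below $1/P^n(e,e)$.
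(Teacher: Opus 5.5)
Your proposal is correct and is essentially the paper's proof. The shared steps are: reduce to $\pi_e$ via $\Ad_U$, invoke \cite[Theorem 7.2]{ARWI11}, test against $T=T^{(n)}_{e,e}$ with $\|\pi_e(T)\|^2\ge 1/P^n(e,e)$, and split the inequivalent irreducibles into the $\pi_z$ with $z\neq e$ and those factoring through $q$, bounding both by space-time Martin kernels via Lemma~\ref{lemma:space_time:inequality} and Theorem~\ref{t:minimalspacetime}. Your essential-norm treatment of $q(T)$, which only needs the boundary values $\le R^n$, is a harmless variant of the paper's subtraction of the finite-rank operator $Q^{(n)}_eTT^*$.

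Three small corrections. Discard the candidate $\sum_y S^{(n)}_{e,y}$: it is a partial isometry, so all the relevant norms equal $1$ and nothing peaks. No symmetry is needed, because $(e,m)\in\ST_z$ exactly when $(z,m)\in\ST$. Keep excluding the point $(e,n)$ as you do in (i): since $K_{\ST}((e,n),(e,n))=1/P^n(e,e)$, both the strict bound of Lemma~\ref{lemma:space_time:inequality} and your inequality $P^n(e,e)P^{m-n}(e,z)<P^m(e,z)$ are valid only over $\ST\setminus\{(e,n)\}$. That restricted set is all that your steps (i) and (ii) actually use.
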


\begin{proof}
Similar to \cite[Proposition 3.11]{DOMA17}, we will apply \cite[Theorem 7.2]{ARWI11} to show that each $\pi_x$ is is a boundary boundary representation by showing that it is completely strongly peaking in the sense of equation (\ref{eq:completely_peaking}). 

Using the decomposition of representations for a given irreducible representation $\pi\nsim_{ue}\pi_x$ from Section \ref{s:introduction}, we end up with two possibilities. One possibility is that $\pi$ does not annihilate the ideal of compact operators, i.e. there exists $z\in\Gamma$ such that $\pi(\K(\mathcal{H}_z)) \neq 0$. In this case $\pi$ is the unique extension of an irreducible representation on $\K(\mathcal{H}_z)$ to an irreducible representation of $\mathcal{T}(\Gamma,\mu)$, and is therefore unitarily equivalent to $\pi_z$ for some $z\neq x$. The other possibility is that $\pi$ annihilates all compact operators, which means that it annihilates the direct sum $\I(\Gamma,\mu)= \bigoplus_{z \in \Gamma} \K(\mathcal{H}_z)$. Thus, $\pi$ must factor through the quotient map $q :\mathcal{T}(\Gamma,\mu) \rightarrow \mathcal{O}(\Gamma,\mu)$ to yield an irreducible representation $\widetilde{\pi}$ of $\mathcal{O}(\Gamma,\mu)$ satisfying $\pi = \tilde{\pi}\circ q$. Hence, we have the following inequality
    \begin{equation*}
        \begin{split}
            \sup_{\pi\nsim_{ue}\pi_x}\|\pi(T)\| & \leq \max\{\sup_{z\neq x}\|\pi_z(T)\|,\, \sup_{\tilde{\pi}}\|(\tilde{\pi}\circ q)(T)\|\} \\
            & = \max\{\sup_{z\neq x}\|\pi_z(T)\|,\, \|q(T)\|\},
        \end{split}
    \end{equation*}
    where $\tilde{\pi}$ is an arbitrary irreducible representation of the Cuntz-Pimsner algebra. Hence, we have reduced the problem of showing that $\pi_x$ is completely strongly peaking to showing that there exists $T\in \mathcal{T^{+}}(\Gamma, \mu)$ satisfying the strict inequality
    \begin{equation*}
        \|\pi_x(T)\| > \max\{\sup_{z\neq x}\|\pi_z(T)\|,\, \|q(T)\|\}.
    \end{equation*}

    Since the action of $\Gamma$ on the Toeplitz algebra $\mathcal{T}(\Gamma,\mu)$ permutes the irreducible representations $\{\pi_z\}_{z\in \Gamma}$ and leaves the ideal $\I(\Gamma,\mu)$ invariant, the problem is further reduced to showing this strict inequality when $x=e$ is the identity element.

    Now, let $z\in\Gamma$ with $z\neq e$. For $n\in\N$ which we assign later, we take $T := T^{(n)}_{e, e}$ and notice that both $T^{\ast}T$ and $TT^{\ast}$ are diagonal with respect to the standard orthonormal basis. Thus, we get
    \begin{equation*}
        \left\|\pi_{e}(T)\right\|^{2} \geq \|\pi_{e}((T^{(n)}_{e, e})^{\ast}(T^{(n)}_{e, e}))(e^{(0)}_{e, e})\| = \left\|\dfrac{1}{P^{n}(e, e)}e^{(0)}_{e, e}\right\| = \dfrac{1}{P^{n}(e, e)}.
    \end{equation*}
    Since $z\neq e$, we must have that $(T_{e,e}^{(n)})^*(e_{e,z}^{(m)}) = 0$ for all $m\leq n$, so we get that 
    \begin{equation*}
        \begin{split}
            \left\|\pi_{z}(T)\right\|^{2} & = \|\pi_{z}(TT^{\ast})\| = \left\|TT^{\ast}|_{\mathcal{H}_{z}}\right\| \\
            & = \sup_{m\in \N}\left\{\left\|(T^{(n)}_{e, e})(T^{(n)}_{e, e})^{\ast}(e^{(m)}_{e, z})\right\| \,:\, n< m\wedge P^{m}( e, z) > 0\right\} \\
            & = \sup_{m\in \N} \left\{\dfrac{P^{m - n}(e, z)}{P^{m}(e, z)} \,:\, n < m\wedge (z,m) \in \ST \right\}.
        \end{split}
    \end{equation*}
    By Lemma \ref{lemma:space_time:inequality} there exists $n\in \N$ such that 
    \begin{equation*}
    \sup_{(z, m)\in\ST }K_{\ST}((e, n), (z, m)) <  \dfrac{1}{P^{n}(e, e)},
    \end{equation*}
    so that
    \begin{equation*}
        \sup_{z\neq e}\left\|\pi_{z}(T)\right\|^{2} \leq \sup_{(z, m)\in\ST }K_{\ST}((e, n), (z, m)) <
        \dfrac{1}{P^{n}(e, e)} \leq \left\|\pi_{e}(T)\right\|^{2}.
    \end{equation*}
    We are now left with showing $\|\pi_{e}(T)\| > \|q(T)\|$. Since $Q^{(n)}_{e}TT^{\ast}$ is a (finite-rank) operator in $\mathcal{I}(\Gamma,\mu)$, by definition of the quotient norm we have that
    \begin{equation*}
        \|q(T)\|^{2} \leq \|TT^{\ast}-Q^{(n)}_{e}TT^{\ast}\| = \sup_{z\in \Gamma} \sup_{m\in \N} \| [TT^{\ast}-Q^{(n)}_{e}TT^{\ast}](e_{e,z}^{(m)}) \|,
    \end{equation*}
    and since $(T_{e,e}^{(n)})(T_{e,e}^{(n)})^*(e_{e,z}^{(n)}) = \frac{\delta_{e,z}}{P^n(e,e)} \cdot e^{(n)}_{e,z}$ we therefore have that
    \begin{equation*}
        \sup_{z\in \Gamma}\sup_{m\in \N} \| [TT^{\ast}-Q^{(n)}_{e}TT^{\ast}](e_{e,z}^{(m)}) \| \leq \underset{n<m \wedge (z,m)\in \ST}{\sup}\dfrac{P^{m - n}(e, z)}{P^{m}(e, z)}.
    \end{equation*}
    Hence, we get that
    \begin{equation*}
    \|q(T)\|^2\leq \sup_{(z,m)\in \ST}K_{\ST}((e, n), (z, m)).
    \end{equation*}
    But now, again by Lemma \ref{lemma:space_time:inequality}, there exists $n\in \N$ such that 
    \begin{equation*}
    \sup_{(z, m)\in\ST }K_{\ST}((e, n), (z, m)) <  \dfrac{1}{P^{n}(e, e)},
    \end{equation*}
    so we have that 
    \begin{equation*}
        \|q(T)\|^{2} < \dfrac{1}{P^{n}(e, e)} \leq \|\pi_e(T)\|^2.
    \end{equation*}
Thus, we have established that 
\begin{equation*}
        \|\pi_e(T)\| > \max\{\sup_{z\neq e}\|\pi_z(T)\|,\, \|q(T)\|\},
\end{equation*}
so that $\pi_e$ is completely strongly peaking.
\end{proof}

Finally, we provide the main result of this section.

\begin{theorem}
\label{thm:envelope:main}

    Let $\Gamma$ be a countable discrete group, and $\mu$ be an admissible finitely supported lazy probability measure on $\Gamma$. Then, we have that 
    \begin{equation*}
        C^{\ast}_{\env}(\mathcal{T}^{+}(\Gamma, \mu))\cong\mathcal{T}(\Gamma, \mu).
    \end{equation*}
\end{theorem}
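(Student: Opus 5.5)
The plan is to show that the Shilov ideal of $\mathcal{T}^{+}(\Gamma,\mu)$ inside the C*-cover $\mathcal{T}(\Gamma,\mu)$ is trivial. Then the identity cover is already the $C^{\ast}$-envelope. By the universal property of the $C^{\ast}$-envelope, there is a surjective $\ast$-homomorphism $q_{\env}\colon \mathcal{T}(\Gamma,\mu)\to C^{\ast}_{\env}(\mathcal{T}^{+}(\Gamma,\mu))$ that is completely isometric on $\mathcal{T}^{+}(\Gamma,\mu)$, and its kernel $\J$ is the Shilov ideal. So it suffices to prove $\J=0$.

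The first step uses Proposition \ref{prop:repr:uep}. Each $\pi_z$ is a boundary representation for $\mathcal{T}^{+}(\Gamma,\mu)$, so it has the unique extension property. A standard fact from Arveson's theory (see \cite{ARWI69, ARWI08} and \cite{DOSA18} for the non-unital operator algebra version) says that every boundary representation factors through the $C^{\ast}$-envelope, i.e.\ annihilates the Shilov ideal. Concretely, $\pi_z\circ q_{\env}^{-1}$ is defined on $q_{\env}(\mathcal{T}^{+})$ and is completely contractive. By Arveson's extension theorem it extends to a completely positive contraction $\phi_z$ on $C^{\ast}_{\env}$. Then $\phi_z\circ q_{\env}$ is a completely contractive completely positive extension of $\pi_z|_{\mathcal{T}^{+}}$ to $\mathcal{T}(\Gamma,\mu)$, so uniqueness forces $\phi_z\circ q_{\env}=\pi_z$. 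Hence $\pi_z(\J)=0$ for every $z\in\Gamma$.

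The second step concludes. The identity representation of $\mathcal{T}(\Gamma,\mu)$ on $\mathcal{H}(\Gamma,\mu)=\bigoplus_z \mathcal{H}_z$ is, by construction, the direct sum $\bigoplus_{z\in\Gamma}\pi_z$, since every generator $S^{(n)}_{x,y}$ leaves each $\mathcal{H}_z$ invariant and so each $\mathcal{H}_z$ is reducing. Thus for $T\in\J$ we get $\|T\|=\sup_z\|\pi_z(T)\|=0$, so $\J=0$ and $q_{\env}$ is a $\ast$-isomorphism, giving $C^{\ast}_{\env}(\mathcal{T}^{+}(\Gamma,\mu))\cong\mathcal{T}(\Gamma,\mu)$. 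An alternative route to the same conclusion would be Theorem \ref{tm:norm_via_bdry_rep}: any boundary ideal is annihilated by all boundary representations, in particular by the faithful family $\{\pi_z\}$.

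The main technical obstacle is the non-unital bookkeeping. $\mathcal{T}^{+}(\Gamma,\mu)$ need not be unital, so the unique extension property must be applied after unitization and passage to the operator system $\mathcal{A}^1+(\mathcal{A}^1)^{\ast}$, as in \cite[Propositions 2.4, 2.5]{DOSA18}. One also has to check that "$\pi_z$ annihilates the Shilov ideal" survives this passage. I would handle this by citing the Dor-On–Salomon framework recalled in Subsection \ref{ss:op-alg-prelim}, which states precisely that boundary representations in the non-unital sense factor through the $C^{\ast}$-envelope. The genuinely hard analytic input, the peaking estimate, has already been supplied by Proposition \ref{prop:repr:uep}.
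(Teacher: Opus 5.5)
Your proposal is correct and follows essentially the same route as the paper. Both use Proposition~\ref{prop:repr:uep} to see that every $\pi_z$ is a boundary representation, note that the identity representation $\bigoplus_{z}\pi_z$ is faithful, and conclude that the Shilov ideal is trivial; the only difference is that you verify directly (via Arveson's extension theorem and the unique extension property) that each boundary representation annihilates the Shilov ideal, whereas the paper cites Arveson's description of the Shilov ideal as the intersection of kernels of boundary representations, of which only this easy inclusion is actually needed.
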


\begin{proof}
By Proposition \ref{prop:repr:uep} we know that all irreducible subrepresentations of the identity representation $\pi_x$ have the unique extension property for $z\in\Gamma$. In particular, the direct sum of all boundary representations is completely isometric on $\mathcal{T}^+(\Gamma,\mu)$. Since the Shilov ideal of $\T^+(\Gamma,\mu)$ inside $\mathcal{T}(\Gamma,\mu)$ is the intersection of all kernels of boundary representations (see \cite[Theorem 2.2.3]{ARWI69}), we see that the Shilov ideal is trivial. Since the C*-envelope of $\T^+(\Gamma,\mu)$ is the quotient of $\mathcal{T}(\Gamma,\mu)$ by the Shilov ideal, we obtain that $\mathcal{T}(\Gamma,\mu)$ is the C*-envelope of $\T^+(\Gamma,\mu)$.
\end{proof}

\bibliographystyle{alpha}
\bibliography{space-time}

\end{document}